\newcolumntype{E}{>{\collectcell\usermacro}c<{\endcollectcell}}
\newcommand\usermacro[1]{\fpeval{1000*#1}\pgfmathprintnumber\pgfmathresult}
\definecolor{grassgreen}{RGB}{92,135,39}
\newcommand{\Oh}[1]{\mathcal{O}{\left(#1\right)}}
\newcommand{\brexp}[1]{\exp{\left({#1}\right)}}                                      
\newcommand{\logdet}[1]{\log\det\!\left(#1\right)}                    
\newcommand{\diag}[1]{\mathsf{diag}\left(#1\right)}                 
\newcommand{\Diag}[1]{\mathsf{Diag}\left(#1\right)}                 
\newcommand{\del}[2]{\frac{\partial{#1}}{\partial{#2}}}             
\newcommand{\mat}[1]{\mathbf{{#1}}}                                 
\renewcommand{\vec}[1]{\mathbf{{#1}}}                                 
\DeclareMathOperator*{\argmin}{arg\,min}  
\newcommand*{\tran}{^{\mkern-1.5mu\mathsf{T}}}                
\newcommand*{\opt}{^{\mathrm{{opt}}}}                
\newcommand*{\adj}{^{\mkern-1.5mu\mathsf{*}}}                 
\newcommand*{\inv}{^{\mkern-1.5mu\mathsf{-1}}}                
\newcommand{\pseudoinv}[1]{\left(#1\right)^{\dagger}}         
\newcommand{\domain}{\mathcal{D}}                             
\newcommand{\trace}{\mathrm{Tr}}                              
\newcommand{\Trace}[1]{\trace\!\left(#1\right)}                
\newcommand{\norm}[1]{\left\| {#1} \right\|}                  
\newcommand{\wnorm}[2]{\left\| {#1} \right\|_{#2}}            
\newcommand{\sqwnorm}[2]{\left\| {#1} \right\|^2_{#2}}        
\newcommand{\ip}[2]{{\left\langle {#1}, {#2} \right\rangle}}  
\newcommand{\wip}[3]{\left\langle{#1},                        
    {#2}\right\rangle_{\!\scriptscriptstyle{\mathrm{#3}}}}
\newcommand\restr[2]{{ \left.\kern-\nulldelimiterspace        
                     {#1}\vphantom{\big|} \right|_{#2}}}
\newcommand{\Rnum}{\mathbb{R}}  
\newcommand{\xcont}{u}                             
\newcommand{\x}{\vec{x}}                           
\renewcommand{\O}{\mathcal{B}}                                   
\newcommand{\ObsOperCont}{\O} 
\newcommand{\ObsOperind}[1]{\ObsOperCont_{#1}}
\newcommand{\y}{\mathbf{y}}                        
\newcommand{\obs}{\y}                              
\newcommand{\param}{\vec{\theta}}                  
\newcommand{\iparam}{\param}                       
\newcommand{\iparprior}{\iparam_{\rm pr}}          
\newcommand{\iparb}{\iparprior}                    
\newcommand{\iparpost}{\iparam_{\rm post}^\obs}    
\newcommand{\ipara}{\iparpost}                     
\newcommand{\ipartrue}{\iparam_{\rm true}}
\newcommand{\Nstate}{\textsc{N}_{\rm state}}                    
\newcommand{\Nparam}{{N_\mathrm{\theta}}}
\newcommand{\Nobs}{\textsc{N}_{\rm obs}}                        
\newcommand{\Npred}{\textsc{N}_{\rm goal}}                      
\newcommand{\nobs}{n_{t}}                                  
\newcommand{\nobstimes}{\nobs}                             
\newcommand{\Nsens}{n_{\rm s}}                         
\newcommand{\budget}{\lambda}                         
\newcommand{\Cparamprior}{\mat\Gamma_{{\rm pr}}}                
\newcommand{\Cparampost}{\mat\Gamma_{{\rm post}}}               
\newcommand{\Cobsnoise}{\mat{\Gamma}_{{\rm noise}}}            
\newcommand{\Cobsnoiseinv}{\mat{\Gamma}\inv_{{\rm noise}}}            
\newcommand{\Cpred}{\mat{\Sigma}}                               
\newcommand{\Cpredprior}{\Cpred_{{\rm pr}}}                    
\newcommand{\Cpredpost}{\Cpred_{\rm post}}                     
\newcommand{\Cparampriormat}{\Cparamprior}                      
\newcommand{\Cparampostmat}{\Cparampost}                        
\newcommand{\Cobsnoisemat}{\mat{R}}                             
\newcommand{\Cpredpostmat}{\Cpredpost}                          
\newcommand{\wCpredpostmat}{\Cpredpostmat(\design)}             
\newcommand{\Fcont}{\mathcal{F}}                                 
\newcommand{\FF}{\Fcont}                                 
\newcommand{\F}{\mathbf{F}}                                      
\newcommand{\Ftind}[2]{\F_{t_{#1}\rightarrow t_{#2}}}             
\newcommand{\Find}[2]{\F_{#1,#2}}
\newcommand{\Fadj}{\F\adj}
\newcommand{\Fadjind}[2]{\F\adj_{#2,#1}}
\newcommand{\Sol}{\mathcal{S}}  
\newcommand{\Soltind}[2]{\Sol_{t_{#1}\rightarrow t_{#2}}}           %
\newcommand{\Solind}[2]{\Sol_{#1,#2}}
\newcommand{\Hessmat}{\mat{H}}                                   
\newcommand{\Predmat}{\mat{P}}                                   
\newcommand{\T}{\mathcal{T}}                                     
\newcommand{\obj}{\Psi}
\newcommand{\GA}{\mathrm{GA}}
\newcommand{\GD}{\mathrm{GD}}
\newcommand{\GM}[2]{\mathcal{N}\!\left( {#1}, {#2}\right)}       
\newcommand{\like}{\mathcal{L}}                          
\newcommand{\Like}[2]{\like{\left(#1|#2\right)}}                                  
\newcommand{\Expect}[2]{\mathbb{E}_{#1}{\left[ #2 \right]} }     
\newcommand{\D}{\mathcal{D}}
\newcommand{\design}{\vec{\zeta}}                                       
\newcommand{\binarydesign}{\design^{\rm b}}        
\newcommand{\weightfunc}{\omega}                                    
\newcommand{\varweightfunc}{\varpi}    
\newcommand{\designmat}{\mat{W}}                                    
\newcommand{\wdesignmat}{\designmat_{\Gamma}}                       
\newcommand{\pred}{\vec{\gamma}}      
\newcommand{\ipred}{\pred}
\newcommand{\predprior}{\ipred_{\mathrm{pr}}}
\newcommand{\ipredprior}{\predprior}
\newcommand{\ipredpost}{\ipred_{\mathrm{post}}}
\newcommand{\wHessmat}{\Hessmat(\design)}                           
\newcommand{\directsum}[3]{\bigoplus\limits_{#1}^{#2}\!{\left(#3\right)}}
\newcommand\reallywidehat[1]{%
\savestack{\tmpbox}{\stretchto{%
  \scaleto{%
    \scalerel*[\widthof{\ensuremath{#1}}]{\kern-.6pt\bigwedge\kern-.6pt}%
    {\rule[-\textheight/2]{1ex}{\textheight}}
  }{\textheight}%
}{0.5ex}}%
\stackon[1pt]{#1}{\tmpbox}%
}
\newcommand{\proj}{\mat{L}}     
\newcommand{\decorrcoeff}{\rho}                     
\newcommand{\floor}[1]{\lfloor #1 \rfloor}
\newcommand{\Floor}[1]{\Bigl\lfloor #1 \Bigr\rfloor}
\newcommand{\commentout}[1]{\iffalse {#1} \fi}
\crefname{hypothesis}{Hypothesis}{Hypotheses}
\crefname{lemma}{Lemma}{Lemmas}
\title{Optimal Experimental Design for Inverse Problems in the Presence of Observation Correlations\thanks{Submitted to the editors \today.
\funding{This work was partially supported by the U.S. Department of Energy, Office of Science, Advanced Scientific Computing Research Program under contract DE-AC02-06CH11357 and Laboratory Directed Research and Development (LDRD) funding from Argonne National Laboratory.}}}
\author{Ahmed Attia\thanks{Mathematics and Computer Science Division,
                   Argonne National Laboratory, Lemont, IL
                  ( \email{attia@mcs.anl.gov} and \email{emconsta@mcs.anl.gov}).}
\and Emil Constantinescu\footnotemark[2]~\thanks{The University of Chicago, IL.}}
\begin{document}

\maketitle

\begin{abstract}
Optimal experimental design (OED) is the general formalism of sensor
placement and decisions about the data collection strategy for
engineered or natural experiments. This approach is prevalent in many
critical fields such as battery design, numerical weather prediction,
geosciences, and environmental and urban studies. State-of-the-art
computational methods for experimental design, however, do not
accommodate correlation structure in observational errors produced by
many expensive-to-operate devices such as X-ray machines or radar and
satellite retrievals. Discarding evident data correlations leads to
biased results, poor data collection decisions, and waste of valuable
resources. We present a general formulation of the OED formalism for
model-constrained large-scale Bayesian linear inverse problems, where
measurement errors are generally correlated. The proposed approach
utilizes the Hadamard product of matrices to formulate the weighted
likelihood and is valid for both finite- and infinite-dimensional
Bayesian inverse problems. 
We also discuss widely used approaches for relaxation of the binary OED
problem, in
light of the proposed pointwise weighting approach,
and present a clear interpretation of the relaxed design and its effect on the
observational error covariance.
Extensive numerical experiments are carried
out for empirical verification of the proposed approach by using an
advection-diffusion model, where the objective is to optimally place a
small set of sensors, under a limited budget, to predict the
concentration of a contaminant in a bounded
domain.
\end{abstract}

\begin{keywords}
  Optimal experimental design (OED),
    Inverse problems,
    Correlated observations,
    Data assimilation
\end{keywords}

\begin{AMS}
  62K05, 35Q62, 62F15, 35R30, 35Q93, 65C60
\end{AMS}

\section{Introduction}\label{sec:introduction}
  Sensor placement is the problem of determining the optimal positions of a given number of sensors from a set 
  of candidate locations. 
  This problem is widely formulated as an optimal experimental design (OED) 
  problem~\cite{Pukelsheim2006optimal,FedorovLee00,Pazman86,Ucinski00}, where the design 
  determines whether to activate a sensor or not. 
  Bayesian OED for data acquisition and sensor placement has been addressed in
  the context of ill-posed linear inverse problems 
  \cite{HaberHoreshTenorio08,HaberMagnantLuceroEtAl12} and nonlinear
  inverse problems~\cite{HaberHoreshTenorio10,HuanMarzouk13} and has also been
  considered in infinite-dimensional settings~\cite{bui2013computational,alexanderian2016bayesian,AlexanderianSaibaba17,AlexanderianPetraStadlerEtAl16,AlexanderianPetraStadlerEtAl14,alexanderian2021optimal:review}.
  In our work and the works cited above, linearity is associated with the parameter-to-observable map, 
  which describes the relation between the quantity of interest (QoI) and the
  observational data.

  Scalable data assimilation and uncertainty quantification
  methodologies have gained  significant interest, especially when
  applied to  large- to extreme-scale models, such as the atmosphere and power
  grid simulations~\cite{bannister2017review,daley1993atmospheric,navon2009data,attia2016reducedhmcsmoother,attia2015hmcfilter}.
  The underlying principle of these methods is that information collected from 
  observational systems is fused into computational models to produce accurate 
  forecasts of a QoI.
  This QoI could, for example, refer to the model parameters, initial condition, 
  or the model state.
  In Bayesian inversion, the goal is to infer the value of the QoI through
  the posterior distribution of that QoI modeled as a random variable, conditioned by the
  noisy observational data.
  The quality of such assimilation systems depends heavily on the extent to which 
  the mathematical assumptions reflect reality, as well as on the quality of the collected 
  measurements, which in turn depends on the data acquisition schedule and the sensor
  placement strategy.

  An optimal design improves the quality of the observational grid and
  the collected data and hence improves the model-based forecast  made by an
  assimilation system.  Deploying and operating observational instruments 
  can be  expensive, however, thus mandating the development of optimal sensor placement
  strategies and optimal data acquisition schedules. 
  An optimal design is generally found by solving a binary optimization problem,
  or a relaxation thereof, with the 
  objective of minimizing the uncertainty of the posterior QoI under 
  the constraints forced by the computational model.
  The objective function of such an optimization problem is formulated based on the 
  choice of the optimality criterion, which is a scalar summary of the uncertainty
  of the Bayesian inversion QoI.
  For example, an A-optimal design minimizes the average variance, 
  and a D-optimal design minimizes the generalized variance, which is equivalent to 
  maximizing the differential Shannon information  content of the inversion 
  QoI~\cite{Pukelsheim2006optimal}.
  
  The presence of observation spatiotemporal error correlations in OED
  for inverse problems poses a significant challenge~\cite{ucinski2020d}.
  A common approach, which is followed for simplicity in solving model-constrained OED problems, 
  is to drop spatiotemporal correlations and assume that observation errors are temporally and 
  spatially uncorrelated.
  This approach can be restrictive, however, and is even violated 
  in many applications where a single instrument is used to collect 
  measurements at various orientations, such as X-ray machines, satellites, 
  and light detection and ranging radars. 

  The statistics literature provides a wealth of treatments of correlated
  observations in several OED scenarios and
  settings~\cite{cressie2015statistics,muller2007collecting,nather1985effective,pilz1991bayesian,ChalonerVerdinelli95}.
  For example, the ordinary least-squares approach for inversion and the D-optimality criterion is adopted
  in~\cite{ucinski2020d} to avoid inaccuracies or lack of knowledge of the
  observation correlation structure.
  Additionally, OED for correlated observations has been considered in the context of linear
  regression models, for example in~\cite{dette2013optimal,ucinski2004experimental}. 
  Nevertheless, in the context of Bayesian inverse problems,
  more work is needed to provide simple algorithmic OEDs that are  suitable for
  multiple choices of the optimality criterion and are capable of
  properly handling correlations in observational errors. 

  In this work we present a generalized OED formulation based on
  the Hadamard product of the observation error covariance matrix with a symmetric
  weighting kernel, where the observation errors are generally correlated.
  In this case the covariances of the observation errors, manifested in the 
  weighted data likelihood, could form a diagonal, a block diagonal, or 
  even a full space-time covariance matrix.
  This approach provides a generalized formulation of the OED optimality
  criterion and the associated gradients, where it takes into account the
  correlation structure of observational errors.
  Moreover, it enables controlling the influence of observational correlations 
  on the experimental design, for example, by ignoring the effect of spurious 
  correlations, using a space-time weighting kernel. 
  In this  paper
  we extensively analyze the effect of the binary design and the relaxation on the
  observational error variances/covariances and on the OED objective function. 
  We also discuss the limitation and potential inaccuracy of existing weighting
  approaches, which have been  used in the case of uncorrelated observational errors.
  The new formulation resolves a common misunderstanding about the effect of the
  relaxation on the OED optimization objective and on the corresponding optimal design. 
  Moreover, this formulation adds a layer of flexibility to the standard 
  OED formulation in the context of Bayesian inversion and is suitable 
  for both finite- and infinite-dimensional Bayesian linear inversion frameworks.
  Additionally, the presented approach enables converting the OED constrained optimization 
  problem into an unconstrained optimization problem, which enables utilizing 
  a plethora of efficient numerical optimization routines.

  The rest of the paper is structured as follows.
  \Cref{sec:background} provides the mathematical background and
  describes the standard formulation of optimal design of experiments for
  Bayesian linear inverse problems.
  The proposed approach for handling errors in observation correlations is
  described in~\Cref{sec:OED_Correlated_OBS}, with detailed derivation of
  formulae given in the appendices included in the supplementary
  material. 
  The setup and results of numerical experiments are given in
  \Cref{sec:numerical_experiments}.
  Concluding remarks are presented in~\Cref{sec:Conclusions}.

\section{Background}
  \label{sec:background}
Here we review the elements of Bayesian inversion and OED 
 for Bayesian inverse problems governed by computational models,
such as partial differential equations (PDEs).

\subsection{Bayesian inverse problem}
  \label{subsec:Bayesian_inversion}
  A forward problem describes the 
  relationship between model parameters and 
  observational data. The vast majority of large-scale simulation models follow the 
  forward model described by
  $ 
    \obs  \!=\! \FF(\iparam) \!+\!\vec{\delta},
  $
  where $\iparam \!\in\! \Rnum^{\Nstate}$ is the model parameter and 
  $\obs \!\in \!\Rnum^{\Nobs}$ is the observational data. 
  The forward map $\FF$ is a discretized parameter-to-observable operator that 
  maps a model parameter into the observation space,
  and $\vec{\delta} \!\in\! \Rnum^{\Nobs}$ accounts for measurement noise. 
  We restrict the discussion in this work to the case of linear models, that is,
  $\FF(\iparam)\!=\!\F\iparam$: 
  \begin{equation}\label{eqn:forward_problem}
    \obs  = \F\iparam + \vec{\delta} \,.
  \end{equation}
  The parameter $\iparam$ is generally modeled as a random variable with a Gaussian 
  prior $\GM{\iparb}{\Cparampriormat}$.
  In the Gaussian framework, the observational noise is assumed to be Gaussian, 
  that is, $\vec{\delta} \sim \GM{\vec{0}}{\Cobsnoise}$, 
  where $\Cobsnoise$ is the observation error covariance matrix.
  In this case, the data likelihood is 
  \begin{equation} \label{eqn:Gaussian_likelihood}
    \Like{\obs}{ \iparam } \propto 
      \exp{\left(-\frac{1}{2} \sqwnorm{\F\iparam-\obs}{\Cobsnoise\inv }\right) } \,,
  \end{equation}
  where the weighted norm is defined as 
  $\sqwnorm{\vec{x}}{\mat{A}} = \vec{x}\tran \mat{A} \vec{x} $.

  The inverse problem involves retrieving the underlying model parameter $\iparam$ from
  noisy measurements $\obs$, given the specification of prior and observation noise uncertainties.
  Specifically, in Bayesian inversion we seek the posterior distribution of $\iparam$ conditioned by
  observational data $\obs$.
  For a linear forward operator $\F$, the posterior is Gaussian $\GM{\ipara}{\Cparampostmat}$ with
  \begin{equation}\label{eqn:Posterior_Params}
    \ipara         = \Cparampostmat \left( \Cparampriormat\inv \iparb + \Fadj \Cobsnoise\inv\, \obs \right)  \,, \quad 
   \Cparampostmat = \left( \Fadj \Cobsnoise\inv \F  + \Cparampriormat\inv
    \right)\inv = \Hessmat\inv \,,
  \end{equation}
  where $\Fadj$ is the adjoint of the forward operator $\F$.
  Here the posterior mean $\ipara$ provides a posterior estimate of the true
  value of the parameter $\ipartrue$, given the prior mean $\iparprior$, the
  data $\obs$, and the associated uncertainties.

  We note that the Hessian $\Hessmat$ of the negative log of the posterior
  probability density function (PDF) is equal to the posterior 
  precision matrix $\Cparampostmat\inv$ and is independent from the data.
  Thus, in the linear Gaussian case one can completely describe the posterior covariances,
  given the forward operator and both prior and observation noise covariances.
  Unlike the linear case, however, if the forward operator $\FF$ is
  nonlinear, the posterior covariance depends on the observational
  data, and the OED problem becomes more difficult and is beyond the scope of
  this work.
  
\subsection{Goal-oriented Bayesian inverse problem}
\label{subsec:GoalOriented_Inversion}
  Solving an inverse problem is often an intermediate step, where the inversion
  parameter, such as the inferred model parameter, is then used to make further
  prediction of a goal QoI. 
  We consider the QoI of the form $\pred=\Predmat
  \iparam\in\Rnum^{\Npred}$, where $\Predmat$ is 
  a linear \textit{goal operator}~\cite{attia2018goal}. 
  A simple example of $\Predmat$ is an integral operator that evaluates the 
  expected value of the inferred parameter.
  We follow this general formulation hereafter since the standard 
  Bayesian inverse problem is a special case, 
  where $\Predmat$ is set to the identity operator $\mathcal{I}$.
  Given the distribution of the model parameter, the prior of $\pred$ is  
  Gaussian $\GM{\ipredprior}{\Cpredprior}$, with $\ipredprior = \Predmat \iparb$, 
  and  $\Cpredprior = \Predmat \Cparamprior \Predmat\adj$.
  The posterior of $\pred$ is also Gaussian $\GM{\ipredpost}{\Cpredpost}$, with
  \begin{equation}\label{eqn:pred_post}
    \ipredpost 
      = \Predmat \Cparampostmat \left( \Cparampriormat\inv \iparb 
        + \Fadj \Cobsnoise\inv\, \obs \right) 
      \,,\quad
      \Cpredpost 
      = \Predmat \left( \Fadj \Cobsnoise\inv \F  
        + \Cparampriormat\inv \right)\inv \Predmat\adj
    \,.
  \end{equation}

  The exact form of the adjoint of the forward operator $\Fadj$ and that of the adjoint 
  of the goal operator $\Predmat\adj$  depend on the problem at
  hand. Specific instances will be discussed in~\Cref{sec:numerical_experiments} in the context of numerical 
  experiments.

\subsection{Bayesian OED}\label{subsec:OED}
  In Bayesian OED, we seek a design for the data acquisition process.
  The specific definition of an experimental design, in general, 
  is problem dependent. 
  For example, an experimental design $\design\in\Xi$ can be associated with 
  the configuration of an observational grid; that is, the experimental design 
  describes the sensor placement strategy. 
  In this case an experimental design is said to be optimal 
  if it minimizes the posterior uncertainty of the solution of an inverse problem in some sense.
  The optimal design $\design^{\rm opt}$ in the context of an inverse
  problem is defined by an \textit{optimality criterion}, which in general
  is a scalar functional $\Psi(\cdot)$ that depends on the posterior uncertainty
  of the inversion QoI.
  In the linear Gaussian settings, the optimality criterion is generally set as 
  a scalar summary of the posterior covariance matrix.
  The traditional alphabetic criteria including A- and D-optimality are the most 
  popular choices \cite{alexanderian2021optimal:review}.
  An A-optimal design $\design^{\rm A-opt}$ minimizes the trace of the posterior 
  covariance, while a D-optimal design $\design^{\rm D-opt}$  
  minimizes its determinant (or equivalently the log-determinant): 
  \begin{equation}\label{eqn:A_and_D_optimality}
    \design^{\rm A\!-\!opt}\! = \argmin_{\design\in\Xi} \Psi^\GA(\design) :=\! \Trace{\Cpredpost(\design)}  \,; \quad
    \design^{\rm D\!-\!opt}\! = \argmin_{\design\in\Xi} \Psi^\GD(\design) :=\! \logdet{\Cpredpost(\design)} \,.
  \end{equation}

  These optimality criteria are typically augmented by regularization and 
  sparsification terms that are discussed in later sections.
  Note that the optimization problems~\eqref{eqn:A_and_D_optimality} can be rewritten
  as maximization by replacing the posterior covariance with the Fisher
  information matrix.

\subsection{Bayesian OED for sensor placement}
  \label{subsec:OED_sensor_placement}
  In the sensor placement problems we seek the \textit{optimal} 
  subset of sensors of size $\budget$ from $\Nsens$ candidate locations.
  In this case the design $\design=\binarydesign$ is a vector of binary entries, 
  whose components can be interpreted as sensors being \textit{active} or
  \textit{inactive}; that is, $\binarydesign\in \Xi := \{0,1\}^{\Nsens}$.
  When a sensor is deactivated, the row/column in the observation
  error covariance matrix corresponding to that observation is eliminated from the formulation of the Bayesian
  inverse problem and is kept when the sensor is active.
  This can be formulated as a binary optimization problem over the binary design
  space.
  Solving a binary OED problem for sensor placement, however, is computationally infeasible for 
  large-scale problems. 
  In practice, the design weights, that is, the integrality of entries $\design_i; i=1,2, \ldots,\Nsens$, 
  are \textit{relaxed} to take values in the interval $[0, 1]$; 
  that is, $\design\in \Xi := [0,1]^{\Nsens}$. 
  Then a sparsification is enforced by adding a suitable regularization term to 
  the optimality objective~\eqref{eqn:A_and_D_optimality}. 

  The design enters the inverse problem formulation through the 
  data likelihood as a set of observation weights. 
  Specifically, the observation covariance $\Cobsnoise$ is replaced with a weighted version 
  $\wdesignmat(\design)$, resulting in the weighted data likelihood
  \begin{equation}\label{eqn:weighted_joint_likelihood}
    \Like{\obs}{ \iparam; \design}
      \propto \exp{\left( - \frac{1}{2} \sqwnorm{ \F \iparam - \obs}{\wdesignmat(\design)} \right) } \,.   
  \end{equation}

  The weighted posterior covariance of the inferred QoI in this case is
  \begin{equation}\label{eqn:weighed_post_cov}
    \wCpredpostmat = \Predmat \Hessmat\inv(\design) \Predmat\adj 
      = \Predmat \left( \Fadj \wdesignmat(\design) \F  
        + \Cparampriormat\inv \right)\inv \Predmat\adj \,.
  \end{equation}

  A common approach to define the weighted inverse covariance matrix
  $\wdesignmat(\design)$, in the case of uncorrelated observation 
  errors, is to introduce  a weighting matrix $\designmat(\design)$ and use the form 
  $\wdesignmat(\design) := \designmat^{\frac{1}{2}}(\design) \Cobsnoise\inv
  \designmat^{\frac{1}{2}}(\design) $. 
  The design matrix $\designmat \in \Rnum^{\Nobs \times \Nobs}$ is a symmetric weighting 
  matrix parameterized by the design $\design$.
  In sensor placement problems, the design is a vector containing weights assigned to 
  each candidate sensor location 
  $\design=(\design_1, \design_2, \ldots, \design_{\Nsens})\tran$,
  and the weighting matrix is a diagonal matrix with design weights on the diagonal; 
  that is, $\designmat = \Diag{\design}$.
  An alternative approach is to use the form
  $\wdesignmat := \Cobsnoise^{-\frac{1}{2}} \designmat \Cobsnoise^{-\frac{1}{2}}$. 
  Little attention is given in the literature to the difference between these two forms,
  mainly because in many applications the observations are assumed to be uncorrelated, 
  resulting in a diagonal covariance matrix $\Cobsnoise$.
  In this case these two forms are equivalent. 
  In general, however, they are not.
  The latter form simplifies the derivation of the gradient of the optimality criterion
  with respect to the design; however, the interpretability of its effect is not intuitive.
  On the other hand, the former form weighs the sensor observations, based on their 
  contributed information gain, resulting in a reduction in the QoI posterior uncertainty. 
  Further discussion of the validity of these forms of the weighted precision
  matrix $\wdesignmat(\design)$ is in~\Cref{subsec:OED_weighting}.
 
  In time-dependent settings, the dynamical model is simulated over a window
  containing observation time instances $t_1, t_2, \ldots t_{\nobstimes}$.
  For simplicity we assume the matrix representation of the forward map $\F$ 
  has rows that form consecutive blocks, each corresponding to all spatial measurements at one time moment. 
  Assuming that the design weights associated with candidate observational gridpoints are 
  fixed over time, the weighting matrix $\designmat \in \Rnum^{\Nobs \times \Nobs}$ 
  can be defined as $\designmat = \mathbf{I}_{\nobstimes} \otimes \Diag{\design}$ with $\mathbf{I}_{\nobstimes} \in \Rnum^{\nobstimes\times\nobstimes}$ 
  an identity matrix and $\Nobs = \Nsens \nobstimes$.
  Here $\otimes$ is the matrix Kronecker product.
  If the observational noise is temporally uncorrelated, 
  then the covariance $\Cobsnoise$ is generally 
  an $\Nobs \times \Nobs$ block diagonal
  matrix $\Cobsnoise = \directsum{m=1}{\nobstimes}{\mat{R}_m}$, where
  $\bigoplus$ is the matrix direct sum and  
  $\mat{R}_m \in \Rnum^{\Nsens \times \Nsens}$ models the spatial 
  covariances between observation errors prescribed at time instance $t_m$.  
  Note that the matrix direct sum is equivalent to the matrix Kronecker product with 
  an identity matrix only if the entries of the direct sum are identical.

  If the observation correlations are time independent, namely, 
  $\mat{R}_m=\mat{R},\, \forall m$, then
  $\Cobsnoise = \mat{I}_{\nobstimes}\otimes \mat{R}$. 
  In the presence of spatiotemporal correlations, the covariance matrix $\Cobsnoise$
  becomes a dense symmetric block matrix with the $mn$th block $\mat{R}_{mn}$
  describing covariances between observation gridpoints at time instances $t_m$ 
  and $t_n$, respectively.

  The standard Bayesian OED formulation, discussed above, is ideal for spatially and 
  temporally uncorrelated observation errors. 
  However, it may not properly account for spatiotemporal correlations, 
  and it is harder to apply when the design is allowed to vary over time.
  Specifically, the role of the relaxed design matrix is to weight covariances
  between candidate sensor locations---a  maximum weight of $1$ means activating
  the corresponding sensors, and a minimum weight of $0$ means deactivating
  them---based on their contribution to the OED optimality objective.
  Thus, the weighting  matrix $\designmat$ needs to be applied to
  the observation error covariance matrix, and not to the precision
  matrix~\cite{liu2016sensor}, which is initially obtained based on the assumption that all
  sensors are activated.
  Moreover, the formulation does not provide enough flexibility for handling the effect of 
  spurious observation correlations that might be introduced, for example, by
  misspecification of the observational error covariances.
  Furthermore, it does not provide any control over the values of design, besides 
  the imposed bound constraints. For example, no inherent property
  enables imposing preference of specific values of the design variables
  such as observation cost constraints.
  In~\Cref{sec:OED_Correlated_OBS} we further discuss the issue of relaxation
  and introduce a generalized formulation of the OED problem capable of handling these limitations.
  %

\subsection{The OED optimization problem}
\label{subsec:OED_optimization}
  The relaxed OED problem for sensor placement is described by the 
  following constrained optimization problem,
  \begin{equation}\label{eqn:oed_relaxed_optimization_problem}
    \design^{\rm opt} 
      =\argmin_{\design \in [0,\,1]^{\Nsens}}{
        \T(\design):=\Psi (\design) + \alpha \, \Phi(\design) 
      }  \,,
  \end{equation}
  where $\Psi$ is the design criterion,  
  $\Phi(\design)\!:\![0,\,1]^{\Nsens}\! \mapsto\! [0, \infty)$ 
  is a penalty function that enforces 
  regularization or sparsity on the design, and $\alpha \!>\! 0$
  is a user-defined penalty parameter.
  The optimality criterion $\Psi$ is set to $\Psi^\GA$ or $\Psi^\GD$, defined by
  ~\eqref{eqn:A_and_D_optimality}, for obtaining an A- or D-optimal design, respectively.

  A gradient-based optimization approach is followed to numerically 
  solve~\eqref{eqn:oed_relaxed_optimization_problem}. 
  A central piece of this procedure is the gradient of the optimality criterion $\Psi$.
  For $i=1, 2, \ldots, \Nsens$, the derivatives of the  A- and D-optimality 
  criteria~\eqref{eqn:A_and_D_optimality}, respectively, are
  \begin{subequations}\label{eqn:general_gradient_entries}
  \begin{align}
    \del{\Psi^\GA}{\design_i} &  
      = -\,\Trace{\Predmat \Hessmat\inv(\design) \Fadj\del{\wdesignmat(\design)}{\design_i}\F \Hessmat\inv(\design) \Predmat\adj}  \,,  \\
    \del{\Psi^\GD}{\design_i} & 
      = \Trace{\Cpredpostmat\inv(\design) 
          \Predmat \Hessmat\inv(\design) \Fadj\del{\wdesignmat(\design)}{\design_i}\F \Hessmat\inv(\design) \Predmat\adj
        } \,,
  \end{align}
  \end{subequations}
  where $\wHessmat$ is the weighted Hessian of the negative log of the posterior
  PDF,
  $\wHessmat:=\Cparampriormat\inv + \Fadj \wdesignmat(\design) \F $.
  The gradient of the objective~\eqref{eqn:oed_relaxed_optimization_problem} is obtained by combining the gradient 
  of the penalty term $\nabla_{\design}\Phi(\design)$ with the optimality criterion~\eqref{eqn:general_gradient_entries},
  that is, $\nabla_{\design}\T(\design)=\nabla_{\design}\Psi (\design) + \alpha \, \nabla_{\design}\Phi(\design)$, 
  and is provided to the numerical optimization routine.
  
  The general form of the derivative~\eqref{eqn:general_gradient_entries} shows that, 
  to formulate the gradient, we need the derivative of the weighted observation
  precision matrix $\wdesignmat(\design)$.
  The final form of the gradient depends on the specific form of the weighted
  observation precision matrix $\wdesignmat(\design)$.
  Most studies assume uncorrelated observations and derive the gradient accordingly for 
  simplicity.
  In the next section we provide a generalized formulation of the weighted noise matrix
  and the OED problem, with emphasis on the effect of the design on the observations and 
  observation spatiotemporal correlations.
  
  Deploying observational sensors can be expensive, and in general we seek a small number of sensors
  to accurately solve the inverse problem with minimum uncertainty levels. 
  To achieve this goal, we choose the penalty $\Phi$ function to enforce sparsity on the optimal design.
  Sparsification can be achieved by using the $\ell_0$ ``norm''~\cite{AlexanderianPetraStadlerEtAl14};
  however, $\ell_0$ is not a valid norm and is nondifferentiable,
  which leads to complications 
  in the optimization procedure.
  An acceptable level of sparsification can be achieved by utilizing the $\ell_1$
  norm in the regularization term~\cite{alexanderian2016bayesian}.

  The solution of~\eqref{eqn:oed_relaxed_optimization_problem} is expected to be 
  a sparse design; however, it is not necessarily binary. 
  A binary design can be obtained from the solution of the relaxed optimization 
  problem~\eqref{eqn:oed_relaxed_optimization_problem}, for example, by 
  applying thresholding~\cite{alexanderian2016bayesian,attia2018goal} 
  or by applying a continuation procedure or by reweighting the regularization 
  term~\cite{herman2019randomization,koval2020optimal}.
  Another approach to ensure a binary design is to partition the domain and use 
  the sum-up rounding procedure; see, for example,
  \cite{sager2013sampling,YuZavalaAnitescu17}.
  The simplest approach is to truncate the relaxed optimal design. This can be
  done by activating sensors corresponding to the nonvanishing weight (e.g.,
  greater than a small-enough value,) resulting
  from solving the relaxed OED problem. 
  Alternatively, given a specific budget
  $\budget$, one can activate the sensors corresponding to the highest
  $\budget$ nonzero weights in the relaxed optimal design.

  Following the approach in~\cite{alexanderian2016bayesian,attia2018goal},
  in the numerical experiments  reported in \Cref{sec:numerical_experiments} we promote 
  sparsification by setting $\Phi$  
  to the $\ell_1$ norm.
  Once a sparse optimal design is obtained, we choose the sensors
  corresponding to the highest $\budget$ 
  optimal weights.
  Note that the discussion here is not limited by the choice of $\Phi$ 
  and can be used with other sparsification methods.

\section{OED for Correlated Observations}
\label{sec:OED_Correlated_OBS}
  With observational noise being uncorrelated in space and in time, 
  the mathematical analysis of the traditional formulation of the PDE-constrained OED problem 
  is simple; however, this places limitations on the applicability of the 
  formulated framework.
  In this section we address this issue by formulating the OED problem where the design 
  choice is sensitive  not only to the variances of the involved observational errors 
  but also to the spatiotemporal correlations.
  Specifically, in this section we introduce a generalization of the traditional OED framework, 
  following a Hadamard product approach~\cite{Horn_1990_Hadamard} 
  for likelihood weighting. 
  This will add another dimension of flexibility that will help us formulate 
  a general OED framework capable of handling observation noise 
  with spatiotemporal correlation.
  We start with a discussion of the effect of design relaxation
  in~\Cref{subsec:OED_weighting}, followed by the proposed
  approach starting from~\Cref{subsec:Schur_OED}.
  
  \subsection{Insight into the relaxation of binary OED problem}
  \label{subsec:OED_weighting}
    As discussed in~\Cref{subsec:OED_sensor_placement}, one can 
    define the weighted precision matrix by weighting the precision matrix
    $\Cobsnoiseinv$ by the design weights.

    \noindent\paragraph{Precision matrix weighting}
    In this case we form the weighted precision matrix as 
    $\wdesignmat(\design):=\designmat^{\frac{1}{2}}(\design) \Cobsnoiseinv
      \designmat^{\frac{1}{2}}$ 
    and then the optimization
    problem~\eqref{eqn:oed_relaxed_optimization_problem}. To utilize the
    derivative~\eqref{eqn:general_gradient_entries} in a gradient-based
    optimization approach for
    solving~\eqref{eqn:oed_relaxed_optimization_problem}, one needs to
    calculate  $\del{\wdesignmat{(\design)}}{\design_i}$. 
    However, an issue is that in general the derivative would not be defined at
    $\design_i=0$. 
    A possible remedy would be to redefine 
    the weighted precision matrix by pre- and postmultiplication with the
    relaxed design:
    \begin{equation}\label{eqn:standard_prepost_fixed}
      \wdesignmat(\design):=\designmat(\design) \Cobsnoiseinv \designmat(\design).
    \end{equation}
    In this case one can show that 
    \begin{subequations}\label{eqn:general_gradient_entries_prepost}
    \begin{equation}\label{eqn:standard_deriv_vec}
      \del{\wdesignmat(\design)}{\design_i} 
        = \vec{v}_i(\design) \vec{e}_i\tran + \vec{e}_i \vec{v}_i\tran; \qquad 
        \vec{v}_i(\design) = \left(\Cobsnoiseinv \vec{e}_i\right) \odot\design\,; i=1,
        \ldots,\Nsens \,,
    \end{equation}
    where $\odot$ is the pointwise product and $\vec{e}_i$ is the $i$th
    versor of $\Rnum^{\Nsens}$.
    By substituting~\eqref{eqn:standard_deriv_vec}
    in~\eqref{eqn:general_gradient_entries}, the derivatives (for A- and
    D-optimality) are
      \begin{align}
        \del{\Psi^\GA}{\design_i} 
          &= -2 
          \vec{e}_i\tran
          \F \Hessmat\inv(\design) \Predmat\adj 
          \Predmat \Hessmat\inv(\design) \Fadj
          \vec{v}_i(\design)
          \,,  \\
        \del{\Psi^\GD}{\design_i} 
          &= -2 
          \vec{e}_i\tran
          \F \Hessmat\inv(\design) \Predmat\adj
            \Cpredpostmat\inv(\design) 
          \Predmat \Hessmat\inv(\design) \Fadj
          \vec{v}_i(\design)
          \,,
      \end{align}
    \end{subequations}
    where we used the circular and transposition properties of the matrix trace.
    This can be thought of as an \textit{ad hoc} fix of the standard OED formulation
    where $\designmat$ pre- and postmultiply the precision matrix
    (instead of using $\designmat^{\frac{1}{2}}$).
    This raises another problem, however: Even with the ad hoc fix~\eqref{eqn:standard_prepost_fixed}, this
    formulation yields a relaxed solution that is not
    guaranteed to match the solution of the original binary OED problem.
    The reason is that the weighting is carried out after
    the precision matrix $\Cobsnoiseinv$ is formulated  assuming all sensors are
    active (even if a design variable is set to $0$), and thus inactive sensors 
    contribute to the elements in the precision (inverse covariance) matrix corresponding to active sensors. 
    More formally, it is not guaranteed---unless $\Cobsnoise$ is diagonal---that 
    $\lim_{\design\rightarrow\binarydesign}{\wdesignmat(\design)}$ is equal to
    the degenerate precision matrix $\Cobsnoiseinv(\binarydesign)$ obtained by 
    eliminating rows/columns from $\Cobsnoise$ corresponding to zero entries of $\binarydesign$,
    where $\binarydesign\in\{0,1\}^{\Nsens}$. In other words, 
    the matrix $\wdesignmat(\design)$ may not have the correct limit as
    $\design$ approaches points on the boundary of the relaxed domain $[0,
    1]^{\Nsens}$. This issue will be discussed further below.

    \noindent\paragraph{Effect of relaxing the design}
    To properly formulate and solve sensor placement problems, we start by
    discussing the roles played by the binary design and the associated relaxation.
    The original OED binary optimization problem for sensor placement takes the
    form
    \begin{equation}\label{eqn:binary_OED_optimization}
      \min_{\binarydesign \in \{0, 1\}^{\Nsens}} \obj(\binarydesign) \,,
    \end{equation}
    where we dropped the regularization term for simplicity.
    Here, $\obj$ depends on $\binarydesign$ through the weighted precision 
    matrix $\wdesignmat$. 
    The binary design $\binarydesign:=(\binarydesign_1, \ldots,
    \binarydesign_{\Nsens})\tran$ characterizes the active sensors and thus
    defines which entries of the observation vector to keep and which to
    remove.
    This corresponds to keeping/removing rows and columns from the observation
    error covariance matrix $\Cobsnoise$.
    As described by~\eqref{eqn:weighted_joint_likelihood} and~\eqref{eqn:weighed_post_cov}, 
    this is encoded in the inverse problem by modifying the likelihood function.
    Since we generally assume observation errors are Gaussian, that is,
    $\vec{\delta} := \obs - \F (\iparam) \sim \GM{\vec{0}}{\Cobsnoise}$, one can
    encode the design in the likelihood as 
    $
      \proj( \obs - \F (\iparam) ) \sim \GM{\vec{0}}{\proj \Cobsnoise
      \proj\tran} \,,
    $
    where $\proj:=\proj(\binarydesign)$ is a sparse matrix that extracts 
    rows and columns of $\Cobsnoise$ that correspond to active sensors; that is, 
    $\proj:=\proj(\binarydesign) \in \mathbb{R}^{n_a \times
      \Nsens}$, $n_a=\sum \binarydesign$, $\proj_{ij} = 1$ if $\binarydesign_j$ is
      nonzero.
    In other words, $\proj$ is a binary matrix with only one entry equal to $1$ on
    each row, and the number of rows is equal to the number of active
    sensors. 
    If we define a binary weight matrix
    $\designmat(\binarydesign):=\Diag{\binarydesign}$, 
    then 
    $
    \proj(\binarydesign) [ \obs - \F (\iparam) ]
      \sim \GM{\vec{0}}{\proj(\binarydesign) \designmat(\binarydesign) 
        \Cobsnoise \designmat\tran(\binarydesign) 
      \proj\tran(\binarydesign)} \,,
    $
    and thus we can write the weighted data likelihood on the form (introducing
    $\designmat$ here does not make a difference given the definition of
    $\proj$, but it will be crucial when the design is relaxed):
    \begin{equation}\label{eqn:binary-mod-likelihood}
      \Like{\obs}{ \iparam; \design}
      \propto \exp{\left( - \frac{1}{2} 
        \sqwnorm{ \F \iparam - \obs}{\mathbf{Q}} 
        \right) } \,;\,\, 
        \mathbf{Q}:=
           \proj\tran(\binarydesign)
             \left( \proj(\binarydesign) \designmat(\binarydesign) 
               \Cobsnoise \designmat\tran(\binarydesign) \proj\tran(\binarydesign)
             \right)\inv \proj(\binarydesign)
        \,.   
    \end{equation}

    Note that pre- and postmultiplication of $\Cobsnoise$ by the diagonal
    matrix $\designmat$ means that the $(i, j)$th entry of $\Cobsnoise$ is
    scaled by $\binarydesign_i \binarydesign_j$, where $i,j=1,\ldots,\Nsens$.
    Loosely speaking, the effect can be explained as follows. 
    When $\binarydesign_i=0$, the $i$th row and column of
    $\Cobsnoise$ are set to $0$ and are then eliminated by the effect of $\proj$.
    Inversion of the covariance matrix is carried out in the projected space (by
    removing data corresponding to that inactive sensor then evaluating the inverse) and the precision matrix
    is then projected
    back to the original space by applying the transpose of $\proj$.
    \begin{lemma}\label{lemma:pseudo_inverse_prepost}
      Let $\designmat:=\Diag{\binarydesign}$, 
      $\binarydesign\in\{0, 1\}^{\Nsens}$.
      $\Cobsnoise\in\Rnum^{\Nsens \times \Nsens}$ is an admissible covariance matrix, and
      $\proj$ is as defined above. Then
        \begin{equation}
           \proj\tran(\binarydesign)
             \left( \proj(\binarydesign) \designmat(\binarydesign) 
               \Cobsnoise \designmat\tran(\binarydesign) \proj\tran(\binarydesign)
             \right)\inv \proj(\binarydesign)
           = \pseudoinv{\designmat(\binarydesign) 
               \Cobsnoise \designmat\tran(\binarydesign) } \,,
        \end{equation}
        where $\dagger$ denotes the Moore--Penrose (pseudo) inverse.
    \end{lemma}
    \begin{proof}
      See~\Cref{app:Proofs}.
    \end{proof}

    Despite being elementary, the significance
    of~\Cref{lemma:pseudo_inverse_prepost} is that it 
    enables rewriting the 
    weighted data likelihood~\eqref{eqn:binary-mod-likelihood} in the following equivalent
    form:
    \begin{equation}\label{eqn:binary-mod-likelihood-pseudo}
      \Like{\obs}{ \iparam; \design}
      \propto \exp{\left( - \frac{1}{2} 
        \sqwnorm{ \F \iparam - \obs}{ 
            \pseudoinv{\designmat(\binarydesign) 
               \Cobsnoise \designmat\tran(\binarydesign) 
             }
        } 
        \right) } \,.   
    \end{equation}

    Let us relax the design variable $\design$ to take any value in
    the domain $[0, 1]^{\Nsens}$, and let the observation errors be
    uncorrelated, that is, diagonal $\Cobsnoise$. 
    When the relaxed design attains a corner point
    of the domain, that is, $\design \in \{0, 1\}^{\Nsens}$, the argument and
    formulation of the likelihood~\eqref{eqn:binary-mod-likelihood-pseudo} hold.
    However, this formulation becomes inconsistent for any value of the design in
    the interior of the relaxation domain, that is, $\left(0, 1\right)^{\Nsens}$.
    Intuitively speaking, we aim to discard a sensor that does not provide
    valuable information (e.g., the sensor provides negligible information or 
    is associated with a very high uncertainty level).
    Now, consider a design value $\design=\epsilon \in(0, 1)$; by
    pre- and postmultiplying the variance of the $i$th sensor with $\epsilon$.
    By letting $\epsilon\rightarrow 0$, the posterior becomes sharper (the
    variance is reduced), which is the exact opposite of the desired
    behavior.
    
    \paragraph{Naive solution}
    In general, we need to formulate the relaxed weighted likelihood such that
    the weighted variance of the $i$th sensor increases as the associated design
    $\design_i\rightarrow 0$. This can be achieved by replacing the diagonal of
    the design matrix with weights $w_{i, i}$ defined as
    \begin{equation}\label{eqn:design_weights_diag}
      w_{i, i}(\design) = \begin{cases}
        \frac{1}{\design_i} &; \,\, \design_i \in(0, 1] \,,  \\
        0 &; \,\, \design_i = 0  \,,
      \end{cases} \quad i=1, \ldots, \Nsens \,.
    \end{equation}

    This way, as $\design_i\rightarrow 0$, we weight the sensor such that
    accuracy of the weighted version decreases. In the case of of uncorrelated
    observational errors, that is, $\Cobsnoise$ is diagonal, one can show that
    $\pseudoinv{\designmat\Cobsnoise\designmat}
      =\Diag{\design} \Cobsnoise\inv \Diag{\design},
    $ which is the traditional weighting form discussed in~\Cref{subsec:OED_sensor_placement}.
   
    The form~\eqref{eqn:design_weights_diag}, however, is invalid for correlated observations.
    Specifically, consider the case when $\Cobsnoise$ is nondiagonal, that is, the
    observation errors are correlated. 
    Pre- and postmultiplication of $\Cobsnoise$ by $\designmat$ corresponds to
    weighting the $(i, j)$ entry of $\Cobsnoise$ by $w_{i,i} w_{j,j}$. More
    specifically, the design variable $\design_i$ contributes to the weight of
    the $i$th row/column of
    $\widetilde{\mat{\Gamma}}_{\rm noise}(\design):=\designmat(\design)\Cobsnoise\designmat(\design)$. 
    If we define the weights as described by~\eqref{eqn:design_weights_diag}, then while the
    variance of the $i$th sensor increases as $\design_i\rightarrow 0$, the
    correlations between the $i$th sensors and other sensors are also magnified.
    However, a less important sensor (to be discarded) should have higher
    uncertainty and be less correlated with other sensors, and thus the
    definition in~\eqref{eqn:design_weights_diag} becomes
    invalid. 

    Note that pre- and postmultiplication of $\Cobsnoise\inv$ with 
    $\designmat:=\Diag{\design}$ means that the precision matrix $\Cobsnoise\inv$ 
    is evaluated first assuming that all sensors are active and is then
    weighted, which results in a discontinuity in the objective at $\design=0$.
    This can be shown in the context of the following simple two-dimensional example.
    
    \paragraph{Illustration of the issue} Following the definition of the linear forward and inverse problem
    in~\Cref{sec:background}, we define the parameter-to-observable map $\F$ as
    a short, wide matrix that projects a model parameter/state onto the observation
    space.
    The prior and observation
    covariance matrices are defined, and the posterior covariance matrix
    $\Cparampostmat$ is then used to formulate the A-optimality criterion $\Psi^\GA$ as
    defined by~\eqref{eqn:A_and_D_optimality} with $\Cobsnoise$ replaced by
    a weighted version based on the relaxed design.
    The forward operator (randomly generated) and prior and observation noise 
    covariances are
    \begin{equation} \label{eqn:oneD_Toy}\small
      \F :=  \begin{bmatrix}
        -0.125 & -0.15 &  1.145 & -0.475 \\
         0.485 & -2.13 &  0.41  &  0.495
      \end{bmatrix}
      \,; \quad 
      \Cobsnoise:= 
        \begin{bmatrix}
          2.0 & 1.0 \\
          1.0 & 2.0
        \end{bmatrix}
      \; \quad
      \Cparamprior := \mat{I} 
        \,,
    \end{equation}
    where $\mat{I}\in \Rnum^{4\times4}$ is an identity matrix, 
    with the following form of the A-optimality criterion:
    \begin{equation}\label{eqn:oneD_Toy_objective}
      \obj(\design):= \Psi^\GA(\design)= \Trace{\Cparampost(\design)}
        = \Trace{
          \left( \F\tran \wdesignmat(\design) \F  
          + \Cparampriormat\inv \right)\inv
          }
        \,,
    \end{equation}
    where~\eqref{eqn:binary-mod-likelihood-pseudo} and~\eqref{eqn:design_weights_diag} are utilized to define $\wdesignmat(\design)$.
    The forward operator $\F$ in~\eqref{eqn:oneD_Toy} describes 
    a simulation at four model gridpoints, with $\Nsens=2$ candidate observational sensors.
    The first sensor measures the average value simulated at the first two 
    model gridpoints, and the second sensor measures the average
    value simulated at the last two gridpoints.
    \Cref{fig:TwoD_Toy_PrePostCov} (left) shows a surface plot of the OED A-optimality criterion 
    as a function of the relaxed design $\design\in[0, 1]^2$.
    The surface plot is evaluated only at points in the interior of the domain $(0, 1)^2$.
    The value of the optimality criterion evaluated at the binary design
    values $\binarydesign\in\{0, 1\}^2$, namely, at the corners of the domain, are
    shown as colored circles. Similar results are  obtained when the
    covariance matrix is pre- and postmultiplied by the design square root, as
    shown in~\Cref{fig:TwoD_Toy_PrePostCov} (right).
    \begin{figure}[!ht]
      \centering
      \includegraphics[width=0.47\linewidth, 
        trim={0 0 0 80pt},clip]{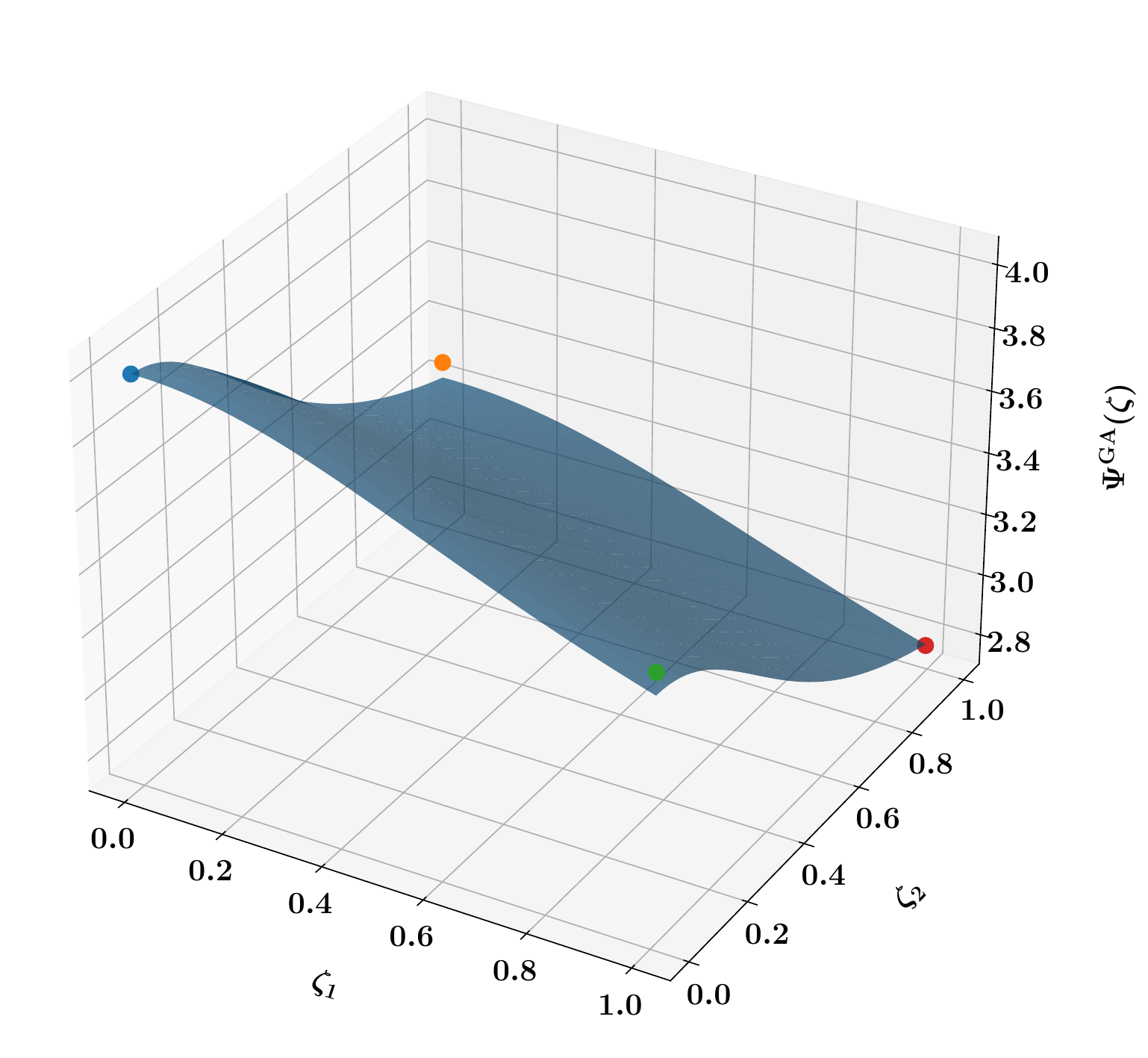}
      \includegraphics[width=0.47\linewidth, 
        trim={0 0 0 80pt},clip]{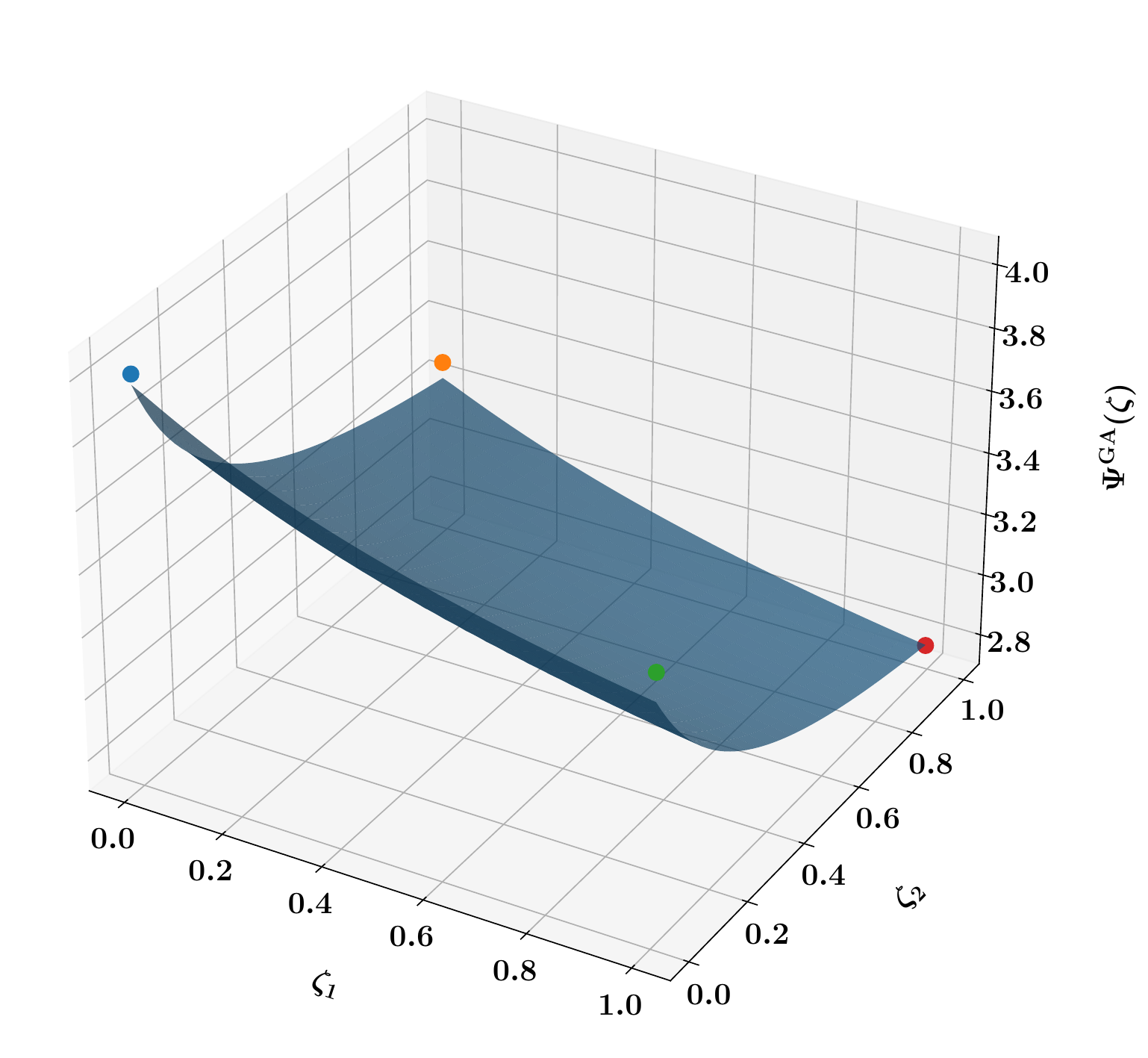}
      \caption{Values of the OED A-optimality criterion (objective) evaluated at points
        in the interior of the domain of the relaxed design $(0, 1)^2$, by using
        $\wdesignmat(\design):= \pseudoinv{\designmat(\design) 
             \Cobsnoise \designmat\tran(\design) }$ (left) and
        $\wdesignmat(\design):= \pseudoinv{\designmat^{\frac{1}{2}}(\design) 
             \Cobsnoise \designmat^\frac{\rm T}{2} (\design) }$ (right),
             respectively, with $\designmat(\design):=\Diag{\design}$.
        The value of the optimality criterion evaluated at the binary design
        values $\binarydesign\in\{0, 1\}^2$ is shown as distinct bullets.
        }
      \label{fig:TwoD_Toy_PrePostCov} 
    \end{figure}

    Results in~\Cref{fig:TwoD_Toy_PrePostCov} show that~\eqref{eqn:oneD_Toy_objective} is discontinuous at 
    $(0, 1)\tran,\,(1,0)\tran$. 
    While the mismatch in this example is not significant, such discontinuity is sufficient 
    to show that the standard relaxation approaches (precision pre- and postmultiplication
    or covariance pre- and postmultiplication with the ad hoc fix) are not
    valid in the case of correlated observations. Specifically, the limit of
    the relaxed objective, as the design attains a binary value, is not
    well defined. 
    This leads to discontinuity of the objective,
    invalidates the associated gradient, and shows that solving a relaxation 
    of the binary OED optimization problem is not guaranteed to produce a design
    that is either binary or matches the optimal solution of the original binary
    optimization problem.

  \subsection{Schur product formulation of the OED problem}
  \label{subsec:Schur_OED}
    To achieve the desired weighting behavior and resolve the issues discussed in~\Cref{subsec:OED_weighting}, 
    we reformulate the weighted likelihood by replacing the covariance matrix
    $\Cobsnoise$ with a weighted version
    $\widetilde{\mat{\Gamma}}_{\rm noise}(\design):=\designmat(\design) \odot \Cobsnoise$,
    where $\odot$ is the Hadamard (Schur) product, 
    $\wdesignmat(\design)=\widetilde{\mat{\Gamma}}^{\dagger}_{\rm noise}(\design)$,
    and $\designmat(\design)$ is
    a weighting matrix with entries defined as
    \begin{equation}\label{eqn:design_weights}
      \varweightfunc\left( t_m, t_n; \design_i, \design_j \right)
        = 
        \begin{cases}
          w_i(t_m, t_n)\, w_j(t_m, t_n) &;\,  i \neq j \\
          \begin{cases}
            0     &; \, w_i(t_m, t_n) =0 \\
            \frac{1}{w_i^2(t_m, t_n)}  &; w_i(t_m, t_n) \neq 0  
          \end{cases} &;\, i=j \\ 
        \end{cases} \,;\,\,
          \begin{matrix}
            & i,j = 1,2,\ldots,\Nsens \,,  \\[-1.0pt] 
            & m,n = 1,2,\ldots,\nobstimes  \,,
          \end{matrix}
    \end{equation}
    where $w_i(t_m, t_n)\in[0, 1]$ is a weight calculated based on the value of the design
    $\design_i$; this weight is applied to autocovariances of the $i$th candidate sensor at time instances $t_m,\,t_n$,
    respectively. 
    Autocovariance here refers to the covariance between observational noise at different time instances for the same observational sensor. 
    For uncorrelated temporal observational errors, the simplest form is to 
    let $w_i(t_m, t_n):=\design_i$.
    The weights given by~\eqref{eqn:design_weights} assure that
    the desired weighting scheme discussed in~\Cref{subsec:OED_weighting}
    is achieved by the weighting kernel $\designmat(\design)$. 
    The weighting function $\varweightfunc\left( t_m, t_n;\, \design_i,
    \design_j \right)$ is symmetric, and thus the weighting kernel $\designmat(\design)$ itself is 
    a symmetric and doubly nonnegative---real positive semidefinite square matrix---weighting 
    kernel.  
    The symmetry here is defined over any permutation of the time indexes or the
    design variables.
    In other words
    $
      \varweightfunc\left( t_m, t_n;\, \design_i, \design_j \right) = 
      \varweightfunc\left( t_n, t_m;\, \design_i, \design_j \right) = 
      \varweightfunc\left( t_m, t_n;\, \design_j, \design_i \right) = 
      \varweightfunc\left( t_n, t_m;\, \design_j, \design_i \right)
    $. 

    In the discussion below, we consider time-dependent settings, where the states 
    are checkpointed at the observation times for efficient evaluation and
    storage of the model state and the adjoints.
    As explained in~\Cref{subsec:numerical_setup},
    checkpointing is crucial for solving large-scale time-dependent Bayesian
    inverse problems.
    The case of time-independent models can be viewed as a special case of 
    the temporally uncorrelated setup with one observation time instance.
    We note that the entries of the observation error covariance matrix $\Cobsnoise$
    describe covariances between all candidate sensor locations, at all
    observation time instances, as discussed
    in~\Cref{subsec:OED_sensor_placement}.
    The general form of the weighting function given
    by~\eqref{eqn:design_weights} 
    enables scaling the entries of the covariance matrix $\Cobsnoise$, 
    where the respective weights are calculated based on both time and space.
    We explain this thought using a simplified example.
    Consider a time-dependent simulation with an observational grid 
    with $\Nsens=2$ candidate sensor locations.  
    Assume the observations are to be collected at two time instances
    $t_1,\, t_2$, respectively, with the observation error covariance matrix 
    \[
      \Cobsnoise=
      \left[
        \begin{array}{c:c}
        \mat{R}_{1,1}   & \mat{R}_{1,2}  
        \\ \hdashline
        \mat{R}_{2,1} & \mat{R}_{2,2}  
        \end{array} 
      \right] 
     \,,
    \]
    where each block $\mat{R}_{n,m} \in \Rnum^{2 \times 2}$ describes the 
    error covariances between observational gridpoints at time instances
    $t_n,\,t_m$, respectively, where $n,m=1,2$.
    The design $\design\in\Xi\subseteq\Rnum^2$ is associated with the 
    candidate sensor locations and thus is defined as 
    $\design=\left(\design_1, \design_2\right)\tran$.
    The weighting matrix is defined as follows:
    \[
      \designmat(\design) :=
      \left[
        \begin{array}{cc:cc}
          \varweightfunc(t_1, t_1;\, \design_1, \design_1)  &
          \varweightfunc(t_1, t_1;\, \design_1, \design_2)  & 
          \varweightfunc(t_1, t_2;\, \design_1, \design_1)  &
          \varweightfunc(t_1, t_2;\, \design_1, \design_2) 
          \\
          \varweightfunc(t_1,t_1;\, \design_2,\design_1)   &
          \varweightfunc(t_1,t_1;\, \design_2,\design_2)   &
          \varweightfunc(t_1,t_2;\, \design_2,\design_1)   &
          \varweightfunc(t_1,t_2;\, \design_2,\design_2)
          \\ \hdashline
          \varweightfunc(t_2,t_1;\, \design_1,\design_1)   &
          \varweightfunc(t_2,t_1;\, \design_1,\design_2)   &
          \varweightfunc(t_2,t_2;\, \design_1,\design_1)   &
          \varweightfunc(t_2,t_2;\, \design_1,\design_2)
          \\
          \varweightfunc(t_2,t_1;\, \design_2,\design_1)   &
          \varweightfunc(t_2,t_1;\, \design_2,\design_2)   &
          \varweightfunc(t_2,t_2;\, \design_2,\design_1)   &
          \varweightfunc(t_2,t_2;\, \design_2,\design_2)
        \end{array} 
      \right] \,.
    \]
    
    This shows that the dimension of the design is independent from the size of
    the temporal domain and is generally much less than the dimension of the 
    space-time domain.
    Additionally, this formulation gives more flexibility in forming the design matrix, 
    which acts as a weighting kernel in the observation space.
    For example, one can define the design matrix following the approach used to define 
    the covariance weighting kernels in~\cite{attia2018_oed_loc}.
    In fact, this formulation has some similarity with spatial covariance
    localization widely used in the  numerical weather prediction applications
    to remove spurious correlations in ensemble-based prior covariance matrices.
    Conversely, here we  define a design matrix $\designmat$ to weight/localize
    covariances between observational sensors based on their respective
    contribution to the uncertainty of the inversion parameter.
    If one believes that the effect of correlation on the design
    should decay in space or in the presence of spurious correlations in
    observation errors, one can encode this decay in the definition of the
    weighting function $\varweightfunc$, in other words, the entries of the design matrix
    $\designmat$.
    This formulation will enable us to fix the size of the design space to that of 
    the observation space, even when spatiotemporal correlations 
    are considered, thus reducing the computational cost of solving 
    the OED optimization problem.

    If the problem is time independent, the matrix $\Cobsnoise$ describes the
    covariances between errors of pairs of candidate sensors.
    In this case the weighting kernel $\designmat$ requires a weighting
    function that depends only on pairs of design variables, that is,
    $\varweightfunc:=\varweightfunc(\design_i,\,\design_j)$; and the weighting
    kernel, described by its respective entries, takes the form
    $\designmat=\left[
      \varweightfunc(\design_i,\,\design_j)\right]_{i,j=1,2,\ldots,\Nsens}$.
    
    Similarly, if the problem is time dependent, while only space correlations
    are considered, the weight 
    function $\varweightfunc(t_m,t_n;\,\design_i,\design_j)$ vanishes for
    any two different time instances $t_m \neq t_n$, 
    and the weighting kernel becomes a block diagonal matrix taking
    the form
    $
    \designmat
    \!=\!
      \directsum{m=1}{\nobs}{
        \left[\varweightfunc(\design_i,\,\design_j)\right]_{i,j=1,2,\ldots,\Nsens}
      } 
    $.
    Further discussion about the spatial weighting
    function $\weightfunc$ is given in~\Cref{subsec:weight_function}. 

  \subsection{On the choice of the weighting function}
  \label{subsec:weight_function}
  The value of the weighting function $\varweightfunc(\design_i,\design_j)$ 
  at any pair of observation gridpoints (and design variables) 
  is used to localize the effect of the correlation  between any pair of points. 
  Specifically, $\varweightfunc\left(\design_i,\design_j\right)
  :=\weightfunc(\design_i)\weightfunc(\design_j)$ scales the $(i,j)$th entry 
  of the spatial observation error covariance matrix $\Cobsnoise$.
  Moreover, when $i=j$, the weighting value scales the uncertainty
  level of the $i$th candidate sensor location.
  We regard the weighting value here 
  in the general sense as a value that indicates the relative importance of 
  each candidate sensor. 
  To formulate the gradient of the
  optimality criterion with respect to the design, with the range $[0,
    1]$, we need to require that the weighting function $\weightfunc$
  be differentiable. 
  Specifically, in order to control the sparsity of the design, in the optimization
  problem~\eqref{eqn:oed_relaxed_optimization_problem} the penalty function
  $\Phi(\cdot)$ is employed. 
  Assuming, for example, that an $\ell_p$ norm is used and that
  $\weightfunc_i\in[0, 1]$ is the weight associated with the $i$th
  candidate sensor, we define the penalty function as
  \begin{equation}\label{eqn:penalty_lp}
    \Phi(\design) := \Bigl\| 
      \Bigl( 
        \weightfunc_1,\, 
        \weightfunc_2,\,
        \ldots,\,
        \weightfunc_{\Nsens}
      \Bigr) \tran \Bigr\|_{p}  \,,
  \end{equation}
  where the penalty is asserted on
  the results of the weighting function, regardless of the  domain 
  of the design variables themselves. 
  The reason is that the role of the penalty function is to promote sparsity of the design and thus limit the number of activated sensors. 
  Following the discussion in~\Cref{subsec:OED_weighting}, a sensor can be deactivated when it is associated with 
  high uncertainty induced in the posterior and low correlation relative to the other sensors. Thus, sparsification can be achieved by reducing the weights $\weightfunc_i$, 
  which has the effect of reducing the correlation and increasing the uncertainty induced by specific sensors; see the formulation of $\varweightfunc$ given by~\eqref{eqn:design_weights}.
  Utilizing~\eqref{eqn:penalty_lp} as a penalty function drives the weights of the sensors to smaller values, which achieves the desired sparsification effect.

  As mentioned in~\Cref{subsec:Schur_OED}, the simplest approach is to let
  $\design_i\in[0, 1]$ and define the weight associated with the $i$th sensor as 
  \begin{equation}\label{eqn:cov-prod_kernel}
    \weightfunc(\design_i) \equiv \weightfunc_i 
      := \design_i; 
      \qquad
      \del{\weightfunc(\design_i)}{\design_k} = \delta_{i,k} \,,  
  \end{equation}
  where $\delta_{i,k}$  is the Kronecker delta function.
  The derivative will be used in developing the gradient of the OED optimization
  objective, as discussed in~\Cref{subsec:Schur_OED_Optimization_Problem}.
  Alternatively, one can let $\design_i\in(-\infty, 0]$ and define the weights as
  \begin{equation}\label{eqn:exp-prod_kernel}
    \weightfunc(\design_i) \equiv \weightfunc_i 
      := e^{\design_i}; 
      \qquad
      \del{\weightfunc(\design_i)}{\design_k} 
        = \weightfunc_i\, \delta_{i,k} \,.  
  \end{equation}
  Furthermore, one can let $\design_i\in\Rnum$ and define the weights using
  a sigmoid function as
  \begin{equation}\label{eqn:sigmoid-prod_kernel}
    \weightfunc(\design_i) \equiv \weightfunc_i 
      = \frac{1}{1+e^{- a \design_i} } ;
    \qquad
    \del{\weightfunc(\design_i)}{\design_k} 
      = a \, \weightfunc_i \left(1 - \weightfunc_i\right) \, \delta_{i, k} 
          \,,  
  \end{equation}
  where $a$ is a positive scaling factor (we generally set the scaling coefficient $a\!=\!1$, 
  unless otherwise stated explicitly). 
  The sigmoid function~\eqref{eqn:sigmoid-prod_kernel} frees the design
  variables to take any real value while keeping the weights in the interval $[0, 1]$ as desired.
  This will allow utilizing unconstrained optimization approaches to solve the
  relaxed OED optimization problem.

  In the remainder of~\Cref{sec:OED_Correlated_OBS} 
  we will keep the discussion independent from the specific choice of
  the design weighting function $\weightfunc$. 
  We start by formulating and discussing the relaxed OED optimization problem 
  and then discuss the solution approach in two cases. 
  In the first case the observation error covariance matrix is block diagonal 
  with only space correlations (\Cref{subsec:space_dependent_OED}), 
  and in the second case both space and time correlations are allowed 
  (\Cref{subsec:spacetime_dependent_OED}).

  \subsection{The relaxed OED optimization problem}
    \label{subsec:Schur_OED_Optimization_Problem}
  We utilize the weighted precision matrix $\wdesignmat(\design)$ to formulate the weighted likelihood for OED as follows:
  \begin{equation} \label{eqn:Schur_weighted_joint_likelihood}
    \Like{\obs}{\iparam; \design} 
      \propto \exp{\left( - \frac{1}{2} 
        \sqwnorm{\F \iparam - \obs}{\wdesignmat(\design)} \right) } \,; \quad
    \wdesignmat(\design) 
      := \pseudoinv{ \Cobsnoise  \odot \designmat(\design) } \,,
  \end{equation}
  where the weighting kernel $\designmat(\design)$ is constructed
  using~\eqref{eqn:design_weights}. Thus, 
  the A- and D-optimal design optimization
  problems~\eqref{eqn:A_and_D_optimality} take the following respective forms:
  \begin{subequations}\label{eqn:A_and_D_optimality_Schur}
  \begin{align}
    \design^{\rm A\!-\!opt} 
      &= \argmin_{\design\in\Xi} \Psi^\GA(\design) 
        := \Trace{\Predmat \left( \Fadj 
          \pseudoinv{ \Cobsnoise  \odot \designmat(\design) }  
        \F  
          + \Cparampriormat\inv \right)\inv \Predmat\adj
        }
        \,, \label{eqn:A_optimality_Schur}\\ 
    \design^{\rm D\!-\!opt} 
      &= \argmin_{\design\in\Xi} \Psi^\GD(\design) 
        := \logdet{\Predmat \left( \Fadj 
          \pseudoinv{ \Cobsnoise  \odot \designmat(\design) }  
        \F  
          + \Cparampriormat\inv \right)\inv \Predmat\adj
        }
        \,.\label{eqn:D_optimality_Schur}
  \end{align}
  \end{subequations}

  The remainder of~\Cref{subsec:Schur_OED_Optimization_Problem} is dedicated 
  to discussing the validity of the proposed formulation both theoretically and
  empirically.
  As discussed in~\Cref{subsec:OED_optimization}, in order to numerically solve the optimization 
  problems~\eqref{eqn:A_and_D_optimality_Schur}, the derivative of the weighted
  precision matrix $\wdesignmat(\design)$ is required.
  Thus, before solving the relaxed OED optimization problems~\eqref{eqn:A_and_D_optimality_Schur}, 
  it is important to show that $\wdesignmat(\design)$ is continuous in $\design$ and
  that it converges to the projected precision matrix obtained by using a binary design.
  To this end, for clarity we restrict the discussion to the case of spatial correlations,
  and we consider the case where
  $w_i:=\design_i\in[0,1],\,i=1,\ldots,\Nsens$ and 
  show that 
  \[
    \wdesignmat(\design) \rightarrow \proj\tran(\binarydesign)\Bigl({\proj(\binarydesign) \Cobsnoise
    \proj\tran(\binarydesign)}\Bigr)\inv \proj(\binarydesign)
    \,,
  \]
  as $\design\rightarrow\binarydesign$ for a binary design $\binarydesign$.
  The other forms of the weights, such as the exponential or sigmoidal
  weighting kernel, follow similarly since $w_{i,j}\in[0, 1]$.  
  \Cref{lemma:pseudo_inverse_pointwise} expands the form of the weighted
  precision matrix $\wdesignmat$ and shows similarity with~\Cref{lemma:pseudo_inverse_prepost}.
  \begin{lemma}\label{lemma:pseudo_inverse_pointwise}
    \begin{equation}
      \pseudoinv{ \Cobsnoise  \odot \designmat(\design) }
        = \proj\tran(\design) \left(\proj(\design) \Bigl( \Cobsnoise  \odot \designmat(\design) \Bigr)
        \proj\tran(\design) \right)\inv \proj(\design) \,.
    \end{equation}
  \end{lemma}
  \begin{proof}
    See~\Cref{app:Proofs}.
  \end{proof}

  The main result here is stated in~\Cref{theorem:continuity} and shows that the weighted
  precision $\wdesignmat(\design)$ is continuous and properly relaxes the
  binary design. The proof of~\Cref{theorem:continuity} makes use of \Cref{lemma:regularization_equivalence}, which shows that the weighting 
  scheme~\eqref{eqn:design_weights} is equivalent to regularization of pre-
  and postmultiplication, as suggested by~\eqref{eqn:noise_regularization}. 
  This approach is used to handle discontinuity issues raised for example in
  $c$-optimal design problems; see, for  example,~\cite[Section 5.4]{pronzato2013design}.
  \begin{lemma}\label{lemma:regularization_equivalence}
    Assume $\design \in(0, 1)^{\Nsens}$ and $\designmat(\design)$ is given by~\eqref{eqn:design_weights}. 
    Then
    $
    \left(\designmat(\design) \odot \Cobsnoise\right)\inv \rightarrow
    \mat{\Gamma}\inv_{\rm noise}
    $
    when $\design \rightarrow \vec{1}$,
    and 
    $
    \left(\designmat(\design) \odot \Cobsnoise\right)\inv \rightarrow
    \mat{0}
    $
    when $\design \rightarrow \vec{0}$ where the convergence is elementwise.
  \end{lemma}
  \begin{proof}
    See~\Cref{app:Proofs}.
  \end{proof}
  \begin{theorem}\label{theorem:continuity}
    The matrix-valued function
    $\wdesignmat(\design)=\pseudoinv{\Cobsnoise\odot\designmat(\design)}$,
    where $\design$ is a relaxed design $\design\in[0, 1]^{\Nsens}$ 
    and the entries of the design matrix $\designmat(\design)$ defined by
    by~\eqref{eqn:design_weights}, is continuous.
  \end{theorem}
  \begin{proof}
    See~\Cref{app:Proofs}.
  \end{proof}

  \subsection{Space correlations}\label{subsec:space_dependent_OED}
    Let $\design\in\Xi\subseteq\Rnum^{\Nsens}$ be the design, and assume that the 
    observation errors are temporally uncorrelated.
    In this case, 
    as discussed in~\Cref{subsec:Schur_OED},
    the observation error covariance matrix and the design weighting matrix
    take the form
    \begin{equation}\label{eqn:space_covariance_designmat}
      \Cobsnoise = \directsum{m=1}{\nobstimes}{\mat{R}_m}
      \,; \quad
      \designmat 
      = 
      \directsum{m=1}{\nobstimes}{
        \sum_{i,j=1}^{\Nsens}{ \varweightfunc(\design_i,\,\design_j) 
          \vec{e}_i \vec{e}_j\tran }  
      }  \,,
    \end{equation}
    where $\vec{e}_i$ is the $i$th versor of $\Rnum^{\Nsens}$, that is, the  $i$th vector in the natural basis. 
    In this case the derivative of $\designmat$ with respect to a given entry 
    of the design vector, $\design_j$,
    is a sparse symmetric matrix with only nonzero entries in the $j$th row 
    and the $j$th column of each block.
    Specifically, if we define the vector of elementwise partial derivatives
    of weights 
    \begin{equation}\label{eqn:weights_derivative_space}
      \vec{\eta}_j 
        := \left( 
        \frac{ \vec{\eta}^{(1)}_j}{1\!+\!\delta_{1,j}}, 
        \ldots, 
        \frac{\vec{\eta}^{(\Nsens)}_j}{1\!+\!\delta_{\Nsens,j}} 
        \right)\tran 
        ; \,\,\,
      \vec{\eta}^{(i)}_j
        := 
        \begin{cases}
          \del{w_i}{\design_i} w_j  &;  i \neq j \\
          \begin{cases}
            0     &;  w_i =0 \\
            \frac{-2}{w_i^3}  \del{w_i}{\design_i}   &; w_i \neq 0  
          \end{cases} &; i=j \\ 
        \end{cases} ;
          \begin{matrix}
            & i,j = 1,\ldots,\Nsens ,  \\[-1.0pt] 
          \end{matrix}
    \end{equation}
    which is obtained by piecewise differentiation
    of~\eqref{eqn:design_weights},
    then it immediately follows that 
    \begin{equation}\label{eqn:weighted_obs_derivative_space}
      \del{\wdesignmat(\design)}{\design_j} 
        = - \wdesignmat(\design) 
          \directsum{m=1}{\nobstimes}{ 
          \vec{e}_j \left( \left(\mat{R}_m \vec{e}_j\right) 
            \odot \vec{\eta}_j\right)\tran 
          + \left( \left(\mat{R}_m \vec{e}_j\right) 
            \odot \vec{\eta}_j\right) \vec{e}_j\tran 
        }   \wdesignmat(\design) \,,
    \end{equation}
    where 
    we utilized the distributive property of the Hadamard product 
    and the fact that $\mat{R}_m$ is symmetric and
    $
      \mat{R}_k\odot\left(\vec{\eta}_j\vec{e}_j\tran\right) 
        = \left( \left( \mat{R}_k\vec{e}_j \right)
          \odot\vec{\eta}_j \right) \vec{e}_j\tran
    $.
    From~\eqref{eqn:Schur_weighted_joint_likelihood}, it follows that
    \begin{equation}\label{eqn:spacial_weighted_noise}
      \wdesignmat(\design) 
        = \directsum{m=1}{\nobstimes}{ \mat{V}_m^{\dagger}(\design)  } \,,
        \quad
        \mat{V}_m(\design) 
        := 
            \mat{R}_m \odot 
            \left( \sum_{i,j=1}^{\Nsens}{ \varweightfunc(\design_i,\,\design_j) 
              \vec{e}_i \vec{e}_j\tran }   \right)
          \,,
    \end{equation}
    where the pseudo inverse in~\eqref{eqn:spacial_weighted_noise} 
    can be efficiently evaluated given the fact that
    $
    \mat{V}_m^{\dagger}(\design) 
    = \proj_m\tran \left(\proj_m \mat{V}_m(\design) \proj_m\tran\right)\inv\proj_m 
    $,
    where $\proj_m$ extracts the rows/columns from $\mat{R}_m$ corresponding to
    active sensors and all are equal; that is, $\proj_k=\proj_\ell\,,~\forall
    k,\ell=1,\ldots,\nobstimes$.
    Note that~\eqref{eqn:design_weights} and~\eqref{eqn:weights_derivative_space} contain
    terms inversely proportional to the second and third powers of the weights $w_i$, respectively. 
    These terms tend to infinity as ${w_i}$ approaches zero, which may yield numerical problems unless handled properly in the software implementation.
    Thus, a robust implementation of this formulation requires handling such difficulty. 
    The simplest approach is to apply proper rounding to very small weights which can, for example, be encoded in $\proj$.
    Given the general form of the derivative~\eqref{eqn:general_gradient_entries}, 
    the gradients of the A- and D-optimality
    criteria~\eqref{eqn:A_and_D_optimality}, 
    in the case of temporally uncorrelated observation errors, take the
    following respective forms (detailed derivation is given
    in~\Cref{app:A_Optimality_Space},~\Cref{app:D_Optimality_Space}):
    \begin{subequations}\label{eqn:A_and_D-optimality_gradient_space_final}
      \begin{align}
        \nabla_{\design}{ \Psi^\GA(\design) }
          &= 2 \sum_{m=1}^{\nobstimes}
          \diag{  
              \mat{V}_m^{\dagger}(\design) \Find{0}{m}
              \Hessmat\inv(\design) \Predmat\adj \Predmat \Hessmat\inv(\design)
              \Fadjind{0}{m} \mat{V}_m^{\dagger}(\design)  
              \left( \mat{R}_m \odot \designmat^{\prime} \right)
          } \,,\label{eqn:A-optimality_gradient_space_final}  \\
        \nabla_{\design}{ \Psi^\GD(\design) }
          &= 2 \sum_{m=1}^{\nobstimes} \diag{\!
            \mat{V}_m^{\dagger}(\design) \Find{0}{m} 
            \Hessmat\inv\!(\design) \Predmat\adj \Cpredpost\inv\!(\design)
            \Predmat \Hessmat\inv\!(\design)  \Fadjind{0}{m}
            \mat{V}_m^{\dagger}(\design) 
              \left( \mat{R}_m \!\odot\! \designmat^{\prime} \right) } 
                \,,\label{eqn:D-optimality_gradient_space_final}
      \end{align}
    \end{subequations}
    where $\diag{\mat{A}}$ is the diagonal of a square matrix $\mat{A}$, 
    $\Fadjind{0}{m}$ is the adjoint of $\Find{0}{m}$, 
    and $\designmat^{\prime}$ is a matrix with columns set by 
    using~\eqref{eqn:weights_derivative_space}; that is,
    \begin{equation}\label{eqn:designmat_derivative}
      \designmat^{\prime} = \left[ \vec{\eta}_1,\, \vec{\eta}_2, \ldots,\,
      \vec{\eta}_{\Nsens} \right]  \,. 
    \end{equation} 

    If we assume a diagonal observational error covariance matrix
    $\Cobsnoise$ and utilize the kernel~\eqref{eqn:cov-prod_kernel}, then the
    matrix of the derivative reduces to $\designmat^{\prime}=\frac{1}{2}\mat{I}$.
    In this case the gradient of the A-optimality
    criterion~\eqref{eqn:A-optimality_gradient_space_final} and the gradient of
    the D-optimality criterion~\eqref{eqn:A-optimality_gradient_space_final},
    respectively, reduce to
    \begin{subequations}
      \begin{align} \label{eqn:spatial_correlation_gradient} 
        \nabla_{\design}{ \Psi^\GA(\design) }
          &= \sum_{m=1}^{\nobstimes} \sum_{i=1}^{\Npred}
            \vec{s}_{i,m} \odot \vec{s}_{i,m}\,; \qquad
        \nabla_{\design}{ \Psi^\GD(\design) }
          = \sum_{m=1}^{\nobstimes} \sum_{i=1}^{\Npred}
            \widetilde{\vec{s}}_{i,m} \odot \widetilde{\vec{s}}_{i,m}\,, \\
        \vec{s}_{i,m} 
          &:=  \pseudoinv{\Diag{\design}\mat{R}_m^{\frac{1}{2}} }
              \Find{0}{m} \Hessmat\inv(\design) \Predmat\adj \vec{e}_i 
          \,,  
          \\
        \widetilde{\vec{s}}_{i,m} 
          &:= \pseudoinv{\Diag{\design}\mat{R}_m^{\frac{1}{2}} }
            \Find{0}{m} \Hessmat\inv(\design) \Predmat\adj 
              \Cpredpost^{-\frac{1}{2}}\!(\design)\vec{e}_i
         \,, 
      \end{align} 
    \end{subequations}
    where we utilized the following~\Cref{lemma:mat_id_1}, which can be proven
    with elementary linear algebra. 
    %
    \begin{lemma}\label{lemma:mat_id_1}
      Given a symmetric matrix $\mat{A}$ and a nonnegative diagonal matrix $\mat{B}$ such
      that $\mat{A},\mat{B}\in \Rnum^{n\times n}$, then
      $ 
        \diag{\mat{A}\mat{A}\tran \mat{B}} 
        = \diag{\mat{B}^{\frac{1}{2}} \mat{A}\mat{A} \mat{B}^{\frac{1}{2}}} 
        = \sum_{i=1}^{n} \vec{s}_i \odot
        \vec{s}_i\,; \text{ where }
        \vec{s}_i = \mat{B}^{\frac{1}{2}} \mat{A} \vec{e}_i \,.
      $ 
    \end{lemma}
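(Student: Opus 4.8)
The plan is to verify both claimed identities by a direct entrywise computation, exploiting the symmetry of $\mat{A}$ and the fact that $\mat{B}$ (and hence $\mat{B}^{\frac{1}{2}}$) is diagonal. First I would dispose of the left-hand equality. Since $\mat{A}$ is symmetric, $\mat{A}\mat{A}\tran=\mat{A}\mat{A}$, and for any $\mat{M}\in\Rnum^{n\times n}$ and diagonal $\mat{B}$ the $i$th diagonal entry of $\mat{M}\mat{B}$ is $\mat{M}_{ii}\mat{B}_{ii}$; applying this with $\mat{M}=\mat{A}\mat{A}$ shows that the $i$th diagonal entry of $\mat{A}\mat{A}\tran\mat{B}$ equals $(\mat{A}\mat{A})_{ii}\,\mat{B}_{ii}$. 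On the other side, because $\mat{B}^{\frac{1}{2}}$ is diagonal, pre- and post-multiplication by it only scales rows and columns, so the $i$th diagonal entry of $\mat{B}^{\frac{1}{2}}\mat{A}\mat{A}\mat{B}^{\frac{1}{2}}$ is $(\mat{B}^{\frac{1}{2}})_{ii}(\mat{A}\mat{A})_{ii}(\mat{B}^{\frac{1}{2}})_{ii}=\mat{B}_{ii}(\mat{A}\mat{A})_{ii}$. The two expressions coincide, which is the first equality.

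For the second equality I would introduce $\mat{C}\defeq\mat{B}^{\frac{1}{2}}\mat{A}$. Using the symmetry of $\mat{A}$ and of $\mat{B}^{\frac{1}{2}}$, we get $\mat{A}\mat{B}^{\frac{1}{2}}=(\mat{B}^{\frac{1}{2}}\mat{A})\tran=\mat{C}\tran$, and hence $\mat{B}^{\frac{1}{2}}\mat{A}\mat{A}\mat{B}^{\frac{1}{2}}=\mat{C}\mat{C}\tran$. The $j$th diagonal entry of $\mat{C}\mat{C}\tran$ is then $\sum_{k}\mat{C}_{jk}^{2}$. Since $\vec{s}_i=\mat{B}^{\frac{1}{2}}\mat{A}\vec{e}_i=\mat{C}\vec{e}_i$ is precisely the $i$th column of $\mat{C}$, its $j$th component is $\mat{C}_{ji}$, so the $j$th component of $\sum_{i=1}^{n}\vec{s}_i\odot\vec{s}_i$ is $\sum_{i=1}^{n}\mat{C}_{ji}^{2}$, which equals $(\mat{C}\mat{C}\tran)_{jj}$ after relabelling the summation index. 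Assembling over $j=1,\dots,n$ yields the vector identity $\diag{\mat{B}^{\frac{1}{2}}\mat{A}\mat{A}\mat{B}^{\frac{1}{2}}}=\sum_{i=1}^{n}\vec{s}_i\odot\vec{s}_i$, completing the chain.

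There is no substantive obstacle here — the statement is indeed elementary linear algebra — and the only point deserving a line of care is that $\mat{B}^{\frac{1}{2}}$ be well defined, i.e.\ that $\mat{B}$ have nonnegative diagonal entries, which holds in all the uses of the lemma (for instance the diagonal weighting matrix produced by the SQRT kernel~\eqref{eqn:sqrt_kernel_0}). If desired, the first equality can even be read purely formally by tracking the products $\mat{B}_{ii}=(\mat{B}^{\frac{1}{2}})_{ii}^{2}$ directly, so no definiteness beyond existence of the root is used anywhere.
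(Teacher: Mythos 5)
Your proof is correct; the paper offers no proof of \Cref{lemma:mat_id_1} at all (it merely remarks that the identity "can be proven with elementary linear algebra"), and your entrywise verification — using that $\mat{B}$ diagonal makes $\diag{\mat{M}\mat{B}}_i=\mat{M}_{ii}\mat{B}_{ii}$ for the first equality, and identifying $\vec{s}_i$ as the $i$th column of $\mat{C}=\mat{B}^{\frac{1}{2}}\mat{A}$ so that $\sum_i\vec{s}_i\odot\vec{s}_i$ recovers the diagonal of $\mat{C}\mat{C}\tran$ — is exactly the intended elementary argument. Your side remark that $\mat{B}$ must have nonnegative diagonal entries for $\mat{B}^{\frac{1}{2}}$ to exist is a worthwhile precision the paper leaves implicit.
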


    Note that in this case, that is, when $\mat{R}_m$ are diagonal, 
    by utilizing the weighting kernel~\eqref{eqn:cov-prod_kernel}
    $\varweightfunc(\design_i, \design_j) := \design_i \design_j$ 
    we retrieve the standard OED formulation:
    \begin{equation}
      \wdesignmat 
        = \directsum{m=1}{\nobstimes}{ 
          \pseudoinv{ \pseudoinv{\Diag{\design}} \mat{R}_m  \pseudoinv{\Diag{\design}}  } 
        }
        = \directsum{m=1}{\nobstimes}{ 
          \Diag{\design} \mat{R}\inv_m \Diag{\design}  } 
        \,.
    \end{equation}

    This shows that the traditional OED formulation (for uncorrelated
    observational errors) is equivalent to applying a
    Hadamard product weighting kernel to the observation covariance matrix.

    Generally speaking, the proposed formulation reduces to the standard OED 
    approach only if the observation errors are uncorrelated 
    and the symmetric kernel function utilized is separable, 
    that is, $\weightfunc(\design_i, \design_j) = g(\design_i) g(\design_j)$ 
    for some some function
    $g_*(\design_*):\Omega \subseteq \Rnum \rightarrow[0,1]$.
    In this case the weighted precision matrix can be written as 
    \[
      \wdesignmat= \Bigl(
        \Diag{g(\design_1), g(\design_2),\ldots, g(\design_{\Nsens}) } \,
        \Cobsnoise \,
        \Diag{g(\design_1), g(\design_2),\ldots, g(\design_{\Nsens}) } 
        \Bigr)\inv \,. 
    \]

  Note that the gradients~\eqref{eqn:A_and_D-optimality_gradient_space_final}
  are well defined only if~\eqref{eqn:weighted_obs_derivative_space} is
  continuous. This is stated in~\Cref{lemma:gradient_continuity}. 
  \begin{lemma}\label{lemma:gradient_continuity}
    The matrix-valued entrywise
    derivative~\eqref{eqn:weighted_obs_derivative_space} is
    continuous over the relaxation domain $\design\in[0, 1]^{\Nsens}$.
  \end{lemma}
  \begin{proof}
    See~\Cref{app:Proofs}.
  \end{proof}

  We conclude this subsection with an empirical validation.
  Specifically, we validate the formulation of the relaxed
  objective~\eqref{eqn:A_and_D_optimality_Schur} and
  derivative~\eqref{eqn:A_and_D-optimality_gradient_space_final} empirically
  using the two-dimensional problem~\Cref{eqn:oneD_Toy}.
  \Cref{fig:TwoD_Toy_PointwiseCov} shows a surface plot (left) of the relaxed
  objective $\Psi^\GA(\design)$ described by~\eqref{eqn:A_optimality_Schur}, 
  and a vector-field plot (right) of the gradient~\eqref{eqn:A-optimality_gradient_space_final} evaluated at the
  discretization point of the relaxed design space. 
  The results show that the relaxation produces a surface that properly connects
  the corner points that represent the possible values of the binary design.
  Moreover, the gradient shows continuity in the whole domain including near the
  boundaries when any of the design variables attain a binary value.
  \begin{figure}[!ht]
    \centering
    \includegraphics[width=0.48\linewidth, 
      trim={0 0 0 80pt},clip]{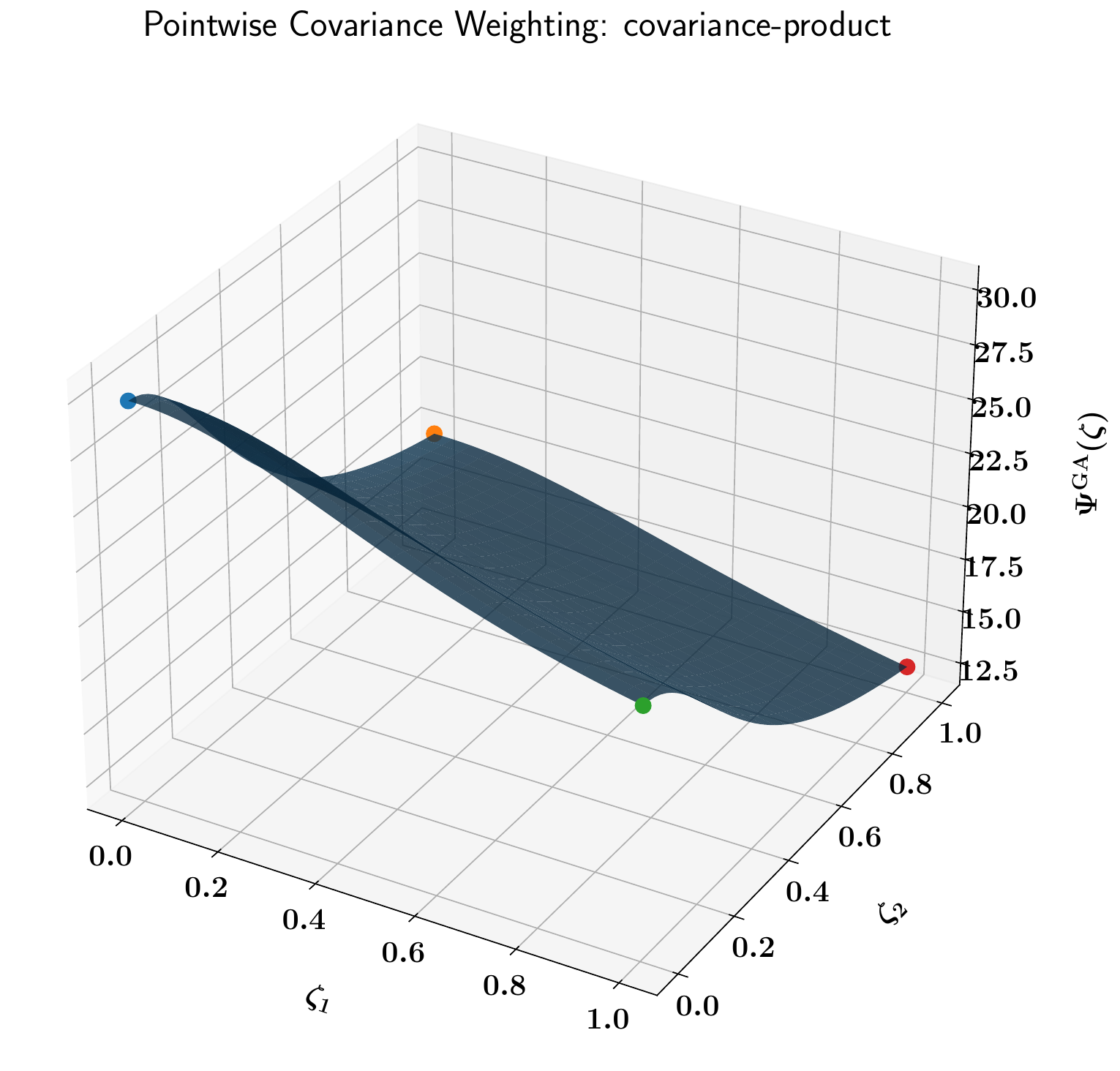}
    \hfill 
    \includegraphics[width=0.45\linewidth, 
      trim={0 0 0 5pt},clip]{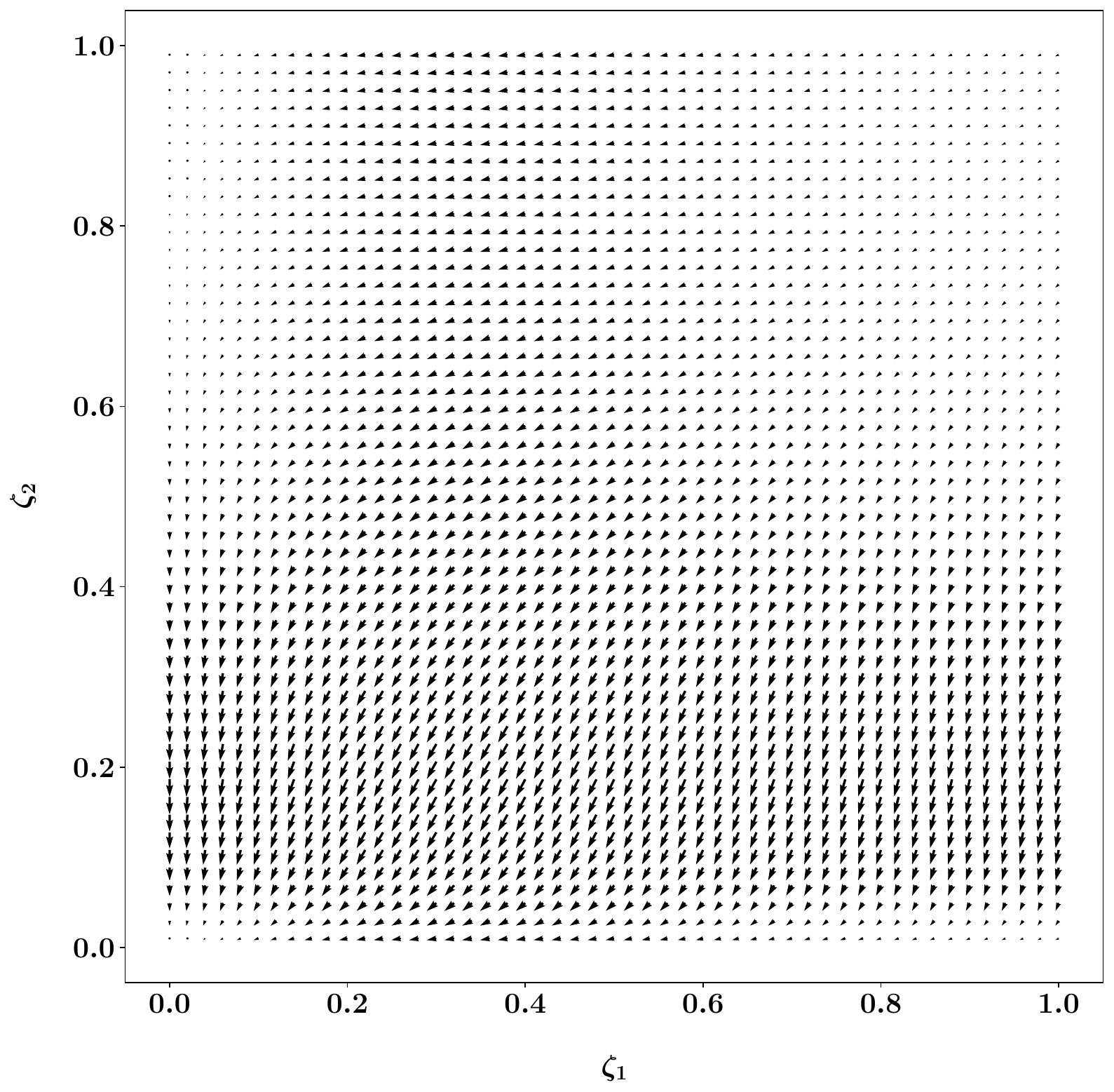}
    \caption{
      Left: values of the OED A-optimality objective~\eqref{eqn:A_optimality_Schur} evaluated at points
        in the interior and at the boundaries of the domain of the relaxed design $(0, 1)^2$.
        The values of the objective evaluated at the binary design
        values $\binarydesign\in\{0, 1\}^2$ are shown as colored circles. 
      Right: a quiver plot showing a vector-field representation of the
      gradient~\eqref{eqn:A-optimality_gradient_space_final} evaluated at the
      discretization point of the relaxed design space (including boundary
      points).
    }
    \label{fig:TwoD_Toy_PointwiseCov} 
  \end{figure}
  %

  \subsection{Spatiotemporal correlations}\label{subsec:spacetime_dependent_OED}
    In time-dependent problems, when the observation errors are temporally correlated, 
    the observation noise covariance $\Cobsnoise$ is a full symmetric matrix, 
    with blocks $\mat{R}_{mn}$ representing cross-covariances between 
    observation errors at time instances $t_n$ and $t_m$, respectively.
    In space-time settings, one can associate a design variable
    $\design_{i,m}$ for the $i$th candidate sensor, from $\Nsens$ candidate 
    locations, at time instance $t_m$. 
    In this case, however, the design space
    $\Rnum^{\Nsens\times\nobstimes}$ grows with the number of observation time
    instances $\nobstimes$, and the decision-making process (e.g., thresholding) 
    will be harder to carry out or even interpret.
    An alternative approach is to associate a design variable $\design_i$ 
    with the $i$th candidate sensor location, while controlling its effect 
    in space and over time. 
    For example, we can fix the design variable $\design_i$ over time, 
    that is, $\design_i= \design_{i,m}\,,~\forall m=1,2,\ldots,\nobstimes$, 
    while controlling its effect  
    by using  the form of $\weightfunc$ and by introducing 
    a temporal decorrelation function.
    
    In the majority of time-dependent applications, the temporal correlation strength 
    generally decays in time.
    In some applications, the correlation between distant temporal
    points is dominated by spurious correlations, and their effect on the design
    should be damped out. In this case, in order to choose an optimal design, 
    the temporal effect of a design weight for a candidate sensor location
    should also decay in time to damp down the effect of spurious correlations
    on the design selection. 
    For example, one could use 
    $\varweightfunc(\design_i,\design_j;\, \cdot,\cdot) := \rho(t_m,\,t_n)\,
    \varweightfunc(\design_i,\design_j;\cdot,\cdot)$ 
    to weight the entries of $\mat{R}_{mn}$, where, as before,
    $\varweightfunc(\cdot)$ is a space weighting function and 
    $\rho(t_m,\,t_n)$ is a symmetric function such that $\rho(t_m,\,t_m)=1$ 
    and is conversely related with the distance between $t_m$ and $t_n$.
    In this case the entries of $\Cobsnoise$ are weighted by
    \begin{equation}
      \varweightfunc(\design_i,\design_j;\, t_m,t_n) 
        :=  
          \begin{cases}
            \rho(t_m,\,t_n)\, \weightfunc(\design_i)\, \weightfunc(\design_j) &;\,  i \neq j \\
            \begin{cases}
              0     &; \, \weightfunc(\design_i) =0 \\
              \frac{\rho(t_m,\,t_n)\, }{\weightfunc(\design_i)^2}  &; \weightfunc(\design_i) \neq 0  
            \end{cases} &;\, i=j \\ 
          \end{cases} \,;\,\,
            \begin{matrix}
              & i,j = 1,2,\ldots,\Nsens \,,  \\[-1.0pt] 
              & k,l = 1,2,\ldots,\nobstimes  \,,
            \end{matrix}
    \end{equation}
    where, as before, $\weightfunc_i\equiv\weightfunc(\design)$ is the weight
    associated with the $i$th candidate sensor.
    One could use a Gaussian-like decorrelation function 
    \begin{equation}\label{eqn:Gauss_decorrelation}
      \rho(t_m,\,t_n):=\brexp{{-d (t_m,\, t_n)}/{2 \ell^2}} \,,
    \end{equation}
    where $d (t_m,\,t_n)$ is a distance function between time 
    instances $t_m$ and $t_n$ and where $\ell$ is a predefined temporal-correlation 
    length scale. 
    An alternative choice of the weighting coefficients is to employ 
    the fifth-order piecewise-rational function~\eqref{eqn:Gaspari_Cohn} 
    of Gaspari and Cohn~\cite{Gaspari_1999_correlation}:
    \begin{equation}\label{eqn:Gaspari_Cohn}
      \rho(t_m,t_n) =
      \begin{cases}
        - \frac{\upsilon^5}{4}  
        + \frac{\upsilon^4}{2} 
        + \frac{5 \upsilon^3}{8}  
        - \frac{5 \upsilon^2}{3} 
        + 1\,, \quad  & 0 \leq \upsilon \leq 1 \\
        \frac{\upsilon^5}{12}  
        - \frac{\upsilon^4}{2} 
        + \frac{5 \upsilon^3}{8}
        + \frac{5 \upsilon^2}{3}
        - 5 \upsilon
        + 4 
        - \frac{2}{3 \upsilon} \,,  
        \quad & 1 \leq \upsilon \leq 2 \\
        0 \,, \quad & 2 \leq \upsilon \,,
      \end{cases}
    \end{equation}
    where $\upsilon:=\frac{d(t_m,t_n)}{\ell}$ and, as before, 
    $\ell$ is a predefined correlation length scale and  
    $d(t_n,t_m)$ measures the temporal distance between time 
    instances $t_n,\,t_m$. 
    The simplest choice of such a temporal distance function is the Euclidean
    distance, defined as $d (t_m,\, t_n):=\left|t_m - t_n\right|$.
    Note that the parameter $\ell$ here controls the speed at which 
    the decorrelation decays over time, and this should 
    be application specific.
    The function~\eqref{eqn:Gaspari_Cohn} is designed to have compact support 
    such that it is nonzero only for a small local region and zero everywhere 
    else and is used for covariance localization 
    in the data assimilation context; 
    see, for
    example,~\cite{Hamill_2001,Whitaker_2002a,attia2018_oed_loc,moosavi2019tuning}.  

    Note that this function is independent from the design variable $\design$
    and is introduced for flexibility, for example, to damp out spurious correlations.
    One can simply choose $ \rho(t_m,t_n)=1$ if the observation error correlations are
    correctly specified at all scales. The discussion below is independent from
    the choice of the temporal weighting function $\rho$.

    In space-time settings, the weighted version of the space-time observation error 
    precision matrix takes the form 
    $\wdesignmat(\design)= \pseudoinv{ \Cobsnoise \! \odot\! \designmat(\design) }$,
    with the space-time weighting matrix $\designmat(\design)$ defined, elementwise, as 
    \begin{equation}\label{eqn:spacetime_designmatrix}
      \designmat(\design) 
        :=  
        \left[
          \varweightfunc 
          \left(
            \design_{ {(k-1)} \bmod {\Nsens} +1} , 
            \design_{ {(h-1)} \bmod {\Nsens} +1} ;\,  
            t_{ \Floor{\frac{k-1}{\nobstimes}} +1} , 
            t_{ \Floor{\frac{h-1}{\nobstimes}} +1}
            \right)
        \right]_{k,h=1,2,\ldots,\Nobs} 
      \,,
    \end{equation}
    where $\floor{\cdot}$ is the floor operation 
    and $\bmod$ represents the modulo 
    operation.
    Note that the entrywise representation of the weighting
    kernel~\eqref{eqn:spacetime_designmatrix} shows that the design variables
    are time independent.

    The derivative of the optimality criterion, required for the gradient-based
    solution of the OED problem, follows directly once the matrix derivative 
    of the weight matrix $\designmat(\design)$ in~\eqref{eqn:spacetime_designmatrix} is
    formulated.
    To this end, let us define the vector of partial derivatives 
    $\vec{\vartheta}_{k,\,n}\in \Rnum^{\Nobs}$, $\Nobs=\Nsens\!\times\!\nobstimes$,
    with the $k$th entry denoted as $\vec{\vartheta}_{i,m}[k]$ and given by 
    \begin{equation}\label{eqn:weights_derivative_spacetime}
      \vec{\vartheta}_{i,m}[k]:
        = \frac{1}{1 \! +\! \delta_{i,\, {(k\!-\!1)} \bmod {\Nsens} +1 }} 
        \del{}{\design_i}
          \varweightfunc
          \left( \!
            \design_{i}, 
            \design_{{(k\!-\!1)} \bmod {\Nsens} +1} ;\, 
            t_{m}, 
            t_{\Floor{\!\frac{k-1}{\nobstimes}\!} +1 \!}  \!
          \right)
          \,; \, 
          \begin{matrix}
            & i = 1,\ldots, \Nsens\,,  \\[-2.0pt]
            & m = 1,\ldots, \nobstimes \,, \\[-2.0pt]
            & k = 1,\ldots, \Nobs \,, 
          \end{matrix}
    \end{equation}
    where the piecewise partial derivatives
    in~\eqref{eqn:weights_derivative_spacetime} are evaluated using~\eqref{eqn:weights_derivative_space}.
    The partial derivative of the weight matrix $\designmat(\design)$ with respect to the
    design variable $\design_i$ is a symmetric matrix given by
    \begin{equation}\label{eqn:designmatrix_derivative_spacetime}
      \del{\designmat(\design)}{\design_i} 
        = \sum_{m=1}^{\nobstimes} \vec{e}_q \vec{\vartheta}_{i, m}\tran 
          + \sum_{m=1}^{\nobstimes} \vec{\vartheta}_{i, m} \vec{e}_q \tran  
        \,,\quad 
        q={i+(m\!-\!1)\Nsens} 
        \,;\quad 
        i=1,2,\ldots,\Nsens 
        \,,
    \end{equation}
    where $\vec{e}_q$ here is the $q$th natural basis vector in
    $\Rnum^{\Nobs}$. 
    By applying the distributive property of the Hadamard product, the partial derivatives of the weighted precision matrix take the form
    \begin{equation}\label{eqn:spacetime_precision_derivative}
      \del{\wdesignmat(\design)}{\design_i} 
          = - \wdesignmat(\design) \left( 
            \sum_{m=1}^{\nobstimes} 
            \vec{e}_q \left(
                \left( \Cobsnoise\inv \vec{e}_q \right) 
                \odot \vec{\vartheta}_{i, m}
              \right) \tran
            +
            \sum_{m=1}^{\nobstimes}
            \left(
              \left( \Cobsnoise\inv \vec{e}_q \right) 
              \odot \vec{\vartheta}_{i, m}
              \right) \vec{e}_q \tran \right) \wdesignmat(\design) \,.
    \end{equation}

    Given (\ref{eqn:spacetime_precision_derivative},~\ref{eqn:weights_derivative_spacetime}) 
    and, as shown in (\Cref{app:A_Optimality_SpaceTime},~\Cref{app:A_Optimality_SpaceTime}),
    in the presence of spatiotemporal observation correlations, the gradient of 
    the A- and D-optimality criteria  take the following respective forms: 
    \begin{subequations}
      \begin{align}
        \nabla_{\design}{ \Psi^\GA (\design) }
          &= 2 \sum_{i=1}^{\Nsens} \sum_{m=1}^{\nobstimes}
            \vec{e}_{i} \vec{e}_q \tran
            \wdesignmat(\design) \F \Hessmat\inv(\design) \Predmat\adj \Predmat \Hessmat\inv(\design) \Fadj \wdesignmat(\design)  
              \left(
                \left( \Cobsnoise\inv \vec{e}_q \right)
                \odot \vec{\vartheta}_{i, m}
              \right)  \,, \label{eqn:spatiotemporal_correlations_gradient}  \\
          \nabla_{\design}{ \Psi^\GD\!(\design) }
            &= 2 \sum_{i=1}^{\Nsens}\! \sum_{m=1}^{\nobstimes}\!
              \vec{e}_i \!\left( 
                  \Cobsnoise\inv \vec{e}_q \! \odot\! \vec{\vartheta}_{i, m} 
                \right) \tran \! 
              \wdesignmat(\design) \F  \Hessmat\inv\!(\design) \Predmat\adj  \Cpredpost\inv\!(\design) \Predmat 
                \Hessmat\inv\!(\design)  \Fadj \wdesignmat(\design)  \vec{e}_q \,.
      \end{align}
    \end{subequations}
    %

    \subsection{Computational considerations}
    \label{subsec:computational_cost}
      The OED objective function and the associated gradient together form the 
      main bottleneck in the process of an OED optimization problem. 
      Evaluating the objective function $\T$ of the OED optimization
      problem~\eqref{eqn:oed_relaxed_optimization_problem} and the associated gradient 
      requires evaluating the optimality criterion $\obj$ and the
      associated gradient. They also require specifying the penalty function
      $\Phi$. The cost of evaluating the penalty function and the associated
      gradient, with respect to the design parameter,  is negligible, however,
      compared with the cost of evaluating $\obj$ and the associated gradient. 
      Thus, in what follows we focus on the cost of the A-optimality
      criterion and the associated gradient in terms of the
      number of forward model evaluations. 
      The analysis extends easily to other optimality criteria including D-optimality.

      For simplicity, we assume that the prediction operator $\Predmat$ and the
      corresponding adjoint $\Predmat\adj$ each require one forward model
      evaluation.
      The A-optimality criterion $\design^{\rm A\!-\!opt}\! $ described by~\eqref{eqn:A_and_D_optimality}
      requires one Hessian solve, a forward, and an adjoint integration of the
      model $\F$ for each entry of the posterior covariance matrix diagonal. 
      The Hessian, being the inverse of the posterior covariance of the model
      parameter $\iparam$, is a function of the relaxed design; see, for example,~\cite{attia2018goal}
      for details.

      A preconditioned conjugate gradient (CG) method is used for Hessian solves,
      which requires a forward and an adjoint model evaluation for each
      application of the Hessian.
      We use the prior covariance as a preconditioner. In this case, and by assuming 
      the numerical rank of the prior preconditioned data misfit 
      Hessian~\cite{bui2013computational} is $r \ll
      \Nstate$, then one Hessian solve costs $\mathcal{O}(r)$ CG iterations, that
      is, $\mathcal{O}(2 r)$ forward model evaluations.
      Thus, one evaluation of the objective function $\T$
      in~\eqref{eqn:oed_relaxed_optimization_problem}, assuming A-optimality,
      costs $\Oh{2\, r\, \Npred + 2\, \Npred}=\Oh{2\, r\, \Npred }$ forward model solves.
      
      In the case of space correlations, evaluating the gradient of the
      A-optimality criterion, as described
      by~\eqref{eqn:spatial_correlation_gradient}, requires one Hessian solve and
      two applications of the forward model (including one evaluation of the
      prediction operator) for each vector in the natural basis $\Rnum^{\Npred}$
      at each time instance.
      Thus, the cost of evaluating the gradient 
      $ \nabla_{\design}{ \Psi^\GA(\design) } $ is 
      $\Oh{ 2\, r\, \nobstimes \Npred + 2\, \nobstimes \Npred } 
      = \Oh{ r\, \nobstimes \Npred }$.
      Similarly, in the case of spatiotemporal correlations, evaluating the
      gradient~\eqref{eqn:spatiotemporal_correlations_gradient} 
      costs 
      $\Oh{\Nsens\,\nobstimes ( 4\, r\ +  4 ) } = \Oh{  4\, r\, \nobstimes \,  \Nsens }$ 
        evaluations of the forward
      model $\F$.

  \subsection{Efficient computation of OED objective and gradient}
  \label{subsec:randomization_OED}
    Solving the OED optimization problem requires repeated evaluation of the
    trace of the posterior covariance matrix. Moreover, as discussed
    in~\Cref{subsec:computational_cost},
    constructing the gradient of the optimality criterion requires many forward and backward evaluations
    of the numerical model, with the computational cost dominated by the cost of 
    evaluating the Hessian matrix $\Hessmat$.
    To reduce  the computational cost, we approximate the
    Hessian 
    by a randomized approximation of the
    eigenvalues of a Hermitian matrix as described
    in~\cite{saibaba2016randomizedHermit}; see~\Cref{supp:subsec:reduced_Hessian} 
    for details.
    Note that the Hessian matrix $\Hessmat$ is never constructed in 
    practice; alternatively, only the effect of its inverse on a vector 
    $\Hessmat\inv\x$ is required.
   
    Here we discuss using randomized approaches to approximate the optimality criterion 
    for efficient calculation of both the objective and its gradient.
    Specifically, we utilize a randomized  approximation of the matrix trace 
    for A-optimal designs, and we defer the discussion 
    of randomized approximation of the D-optimality criterion to future work.

    Given a covariance matrix $\mat{C}$, 
    one can estimate its trace following a 
    Monte Carlo approach, using the relation
    $ 
      \Trace{ \mat{C} } = \Expect{}{\vec{z}\tran\mat{C}\vec{z}} 
        \approx \sum_{r=1}^{ n_r } { \vec{z}_r\tran \mat{C} \vec{z}_r }  \,,
    $ 
    where $\{ \vec{z}_r \}_{i=1,2,\ldots,{n_r}}$ are sampled from 
    the distribution of $\vec{z}$, which is generally an i.i.d. random variable 
    that follows a specific probability distribution.
    The most commonly used are Gaussian and 
    Rademacher distributions~\cite{avron2011randomized}.
    The development of the criterion below is independent from the choice of 
    the probability distribution. In the numerical experiments, however, 
    we resort to the Hutchinson trace estimator where the samples $\vec{z}_i$ 
    are drawn from the Rademacher distribution.

    An approximate A-optimality criterion 
    $\widetilde{\Psi}^\GA (\design) \approx \Psi^\GA(\design)$
    takes the form
    \begin{equation}\label{eqn:randomized_A_optimality}
      \widetilde{\Psi}^\GA (\design) 
        \! = \! \frac{1}{n_r} \! \sum_{r=1}^{ n_r } 
          {\vec{z}_r\tran \Cpredpostmat(\design) \vec{z}_r }
        \! = \! \frac{1}{n_r} \! \sum_{r=1}^{ n_r } 
          \vec{z}_r \tran \Predmat \!
            \left( \F \, \wdesignmat(\design)   \, \Fadj 
              \!+\! \Cparampriormat\inv 
            \right)\inv \!\Predmat\adj \vec{z}_r \,,
    \end{equation}
    with $\vec{z}_r \in \Rnum^{\Npred} $, and $\wdesignmat(\design) $ is given
    by~\eqref{eqn:Schur_weighted_joint_likelihood}. 
    The derivative of this randomized A-optimality criterion with respect to the
    $i$th design variable, where $i=1, 2,\ldots, \Nsens $, is
    \begin{equation}
      \del{ \widetilde{\Psi}^\GA (\design) }{\design_i}
        = \frac{1}{n_r} \sum_{r=1}^{ n_r } 
          \vec{z}_r\tran \Predmat \Hessmat\inv(\design) 
          \Fadj \wdesignmat(\design) \left( \Cobsnoise \odot \del{ \designmat(\design) }{\design_i} \right)  \wdesignmat(\design)\F
          \Hessmat\inv(\design) \Predmat\adj \vec{z}_r \,.
    \end{equation}

    If we assume that the observations are temporally uncorrelated, the gradient takes 
    the following form (see~\Cref{app:A_Optimality_Trace} for details):
    \begin{equation}
      \nabla_{\design}{\widetilde{\Psi}^\GA} (\design)
        = \frac{2}{n_r}  
        \sum_{r=1}^{ n_r } \sum_{m=1}^{\nobstimes}
            \vec{\psi}\adj_{r,m} \odot \left( 
              \left( \mat{R}_m \odot \designmat^{\prime} \right)  
            \vec{\psi}_{r,m}\right)  \,; \quad 
      \begin{aligned}
        \vec{\psi}_{r,m} &:= \mat{V}_m^{\dagger}(\design) \F_{0, m} \Hessmat\inv(\design) \Predmat\adj
        \vec{z}_r\,, \\
        \vec{\psi}\adj_{r,m} &:= \left( \vec{z}_r\tran \Predmat \Hessmat\inv(\design) 
             \Fadjind{0}{m} \mat{V}_m^{\dagger}(\design) \right)\tran \,.
      \end{aligned}
    \end{equation}

    In the presence of spatiotemporal correlations, the gradient is
    (see~\Cref{app:A_Optimality_Trace})
    \begin{equation}
      \nabla_{\design}{\widetilde{\Psi}^\GA(\design)}
        = 2 \sum_{r=1}^{ n_r }\!
        \sum_{i=1}^{\Nsens} \! 
          \sum_{m=1}^{\nobstimes}\!
            \vec{e}_i \vec{\psi}_r\adj 
              \vec{e}_q \left( 
                \left( \Cobsnoise \vec{e}_q\right)
                  \!\odot\! \vec{\vartheta}_{i, m} 
              \right) \tran  
          \vec{\psi}_r  \,; \quad
      \begin{aligned}
        \vec{\psi}_r &:= \wdesignmat(\design) \F \Hessmat\inv(\design) \Predmat\adj \vec{z}_r \,, \\
        \vec{\psi}_r\adj &:= \vec{z}_r\tran \Predmat \Hessmat\inv(\design) \Fadj
        \wdesignmat(\design) \,.
      \end{aligned}
    \end{equation}

\section{Numerical Experiments}
\label{sec:numerical_experiments}
We use an advection-diffusion model to simulate the transport of a contaminant
field $\xcont(\vec{x}, t)$ in a closed domain $\domain$. 
A set of candidate sensor locations are predefined, at which the contaminant is to be 
measured at specific locations in the domain at predefined time instances.
The end goal is to 
predict the concentration of the contaminant at a future
time instance, beyond the simulation time. 
To achieve this goal, first we need to find the \textit{optimal} subset of sensors, 
from the candidate locations, to deploy a small number of sensors.
Here, we define optimality in the sense of A-optimal designs.
%

\subsection{Problem setup}
\label{subsec:numerical_setup}
In this section we describe in detail the setup of the 
numerical experiments carried out in this work.

\subsubsection{Model: advection-diffusion}
The governing equation of the contaminant $\xcont(\vec{x}, t)$
 is 
  \begin{equation}\label{eqn:advection_diffusion}
    \begin{aligned}
      \xcont_t - \kappa \Delta \xcont + \vec{v} \cdot \nabla \xcont &= 0     
        \quad \text{in } \domain \times [0,T],   \\
      \xcont(x,\,0) &= \theta \quad \text{in } \domain,  \\
      \kappa \nabla \xcont \cdot \vec{n} &= 0  
        \quad \text{on } \partial \domain \times [0,T],
    \end{aligned}
\end{equation}
where $\kappa>0$ is the diffusivity, $T$ is the simulation final time, and
$\vec{v}$ is the velocity field. 
The domain $\domain$, sketched in~\Cref{fig:AD_Domain_Velocity}(left), 
is the region $(0, 1) \times (0, 1)$, where  the rectangular regions 
model two buildings $\mathbf{B}_1,\, \mathbf{B}_2$, respectively. 
The interior of these rectangular regions is excluded from 
the domain $\domain$, and the contaminant is not allowed to enter. 
The boundary $\partial \domain$ includes both the external boundary 
and the building walls.
The velocity field $\vec{v}$ is obtained by solving a steady Navier--Stokes
equation, with the side walls driving the flow, as detailed in~\cite{PetraStadler11},
and is shown in~\Cref{fig:AD_Domain_Velocity}(middle).
We consider $\kappa>0$ and $\vec{v}$ to be known exactly.
\begin{figure}[!ht]\vspace{-5pt}
  \centering
  \includegraphics[width=0.8\linewidth]{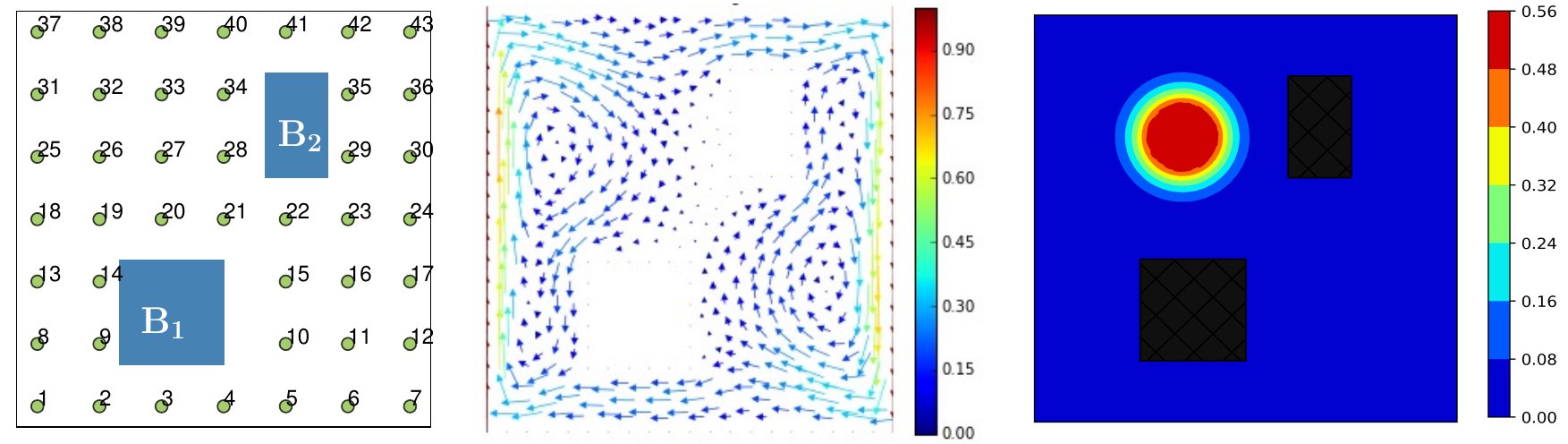}
  \caption{Advection-diffusion model domain, candidate sensor locations,
    velocity field, and ground truth of the
    model parameter. 
    Left: the physical domain $\domain$, including outer boundary, 
    and the two buildings $\mathbf{B}_1,\, \mathbf{B}_2$.
    The small circles indicate candidate sensor locations; the sensors are
    annotated based on their ordering in an observation vector.
    Middle: the velocity field, which is assumed to be fixed over time.
    Right: ground truth of the model parameter, in  other words, the true value of the model initial condition $\iparam$.
    }
  \label{fig:AD_Domain_Velocity} 
\end{figure}

  In this work we develop an implementation that utilizes 
  the package HippyLib~\cite{VillaPetraGhattas2016}, which provides
  the forward operator $\F$, the adjoint $\Fadj$, and a reduced-order 
  Hessian approximation~\cite{AlexanderianPetraStadlerEtAl14};
  see~\Cref{supp:subsec:reduced_Hessian} for additional details on the
  reduced-order Hessian approximation.
  We follow the approach in~\cite{Bui-ThanhGhattasMartinEtAl13}, 
  to discretize and simulate the flow PDE~\eqref{eqn:advection_diffusion}. 
  Specifically, we use finite elements, with Lagrange triangular elements 
  of order $2$, with $\Nstate\!=\!7863$ spatial degrees of freedom in space, and 
  using implicit Euler for simulation in time.
  The true initial model parameter (shown in~\Cref{fig:AD_Domain_Velocity}(right)) is used 
  to create a reference trajectory and synthetic observations.

\subsubsection{Observations and observation noise}
\label{subsubsec:obs_noise}
  The contaminant is observed at a set of (candidate) observation gridpoints 
  uniformly distributed in the space domain 
  $\{ \vec{x}_1,\, \ldots,\, \vec{x}_{\Nsens} \}\!\subset\!\domain$,
  at fixed time instances 
  $\{ t_1,\,  t_2,\, \ldots,\, t_{\nobstimes} \} \subset [0, T]$. 
  The candidate sensor locations, shown in~\Cref{fig:AD_Domain_Velocity}(left),
  constitute the observational grid, with $\Nsens = 43$ candidate sensor locations.
  The observation operator $\ObsOperind{k}$ here is a restriction operator applied 
  to the solution $u(\vec{x}, t)$ to extract solution values (i.e., concentration 
  of the contaminant) at the predefined observation gridpoints at any time 
  instance $t_k$.
  Here the observation times are $t_1\!+\! s \Delta t$, with initial observation
  time $t_1\!=\!1$;  $\Delta t\!=\!0.2$ is the model simulation time step; 
  and $s=0, 1, \ldots, 5$, resulting in $\nobs=6$ observation time instances.
  The matrix representation of the observation operator
  $\ObsOperind{k}$ at any time instance $t_k$ is plotted 
  in~\Cref{fig:Obs_Space_Covariance}(left).
  \begin{figure}[!ht]\vspace{-10pt}
    \centering
    \includegraphics[width=0.45\linewidth]{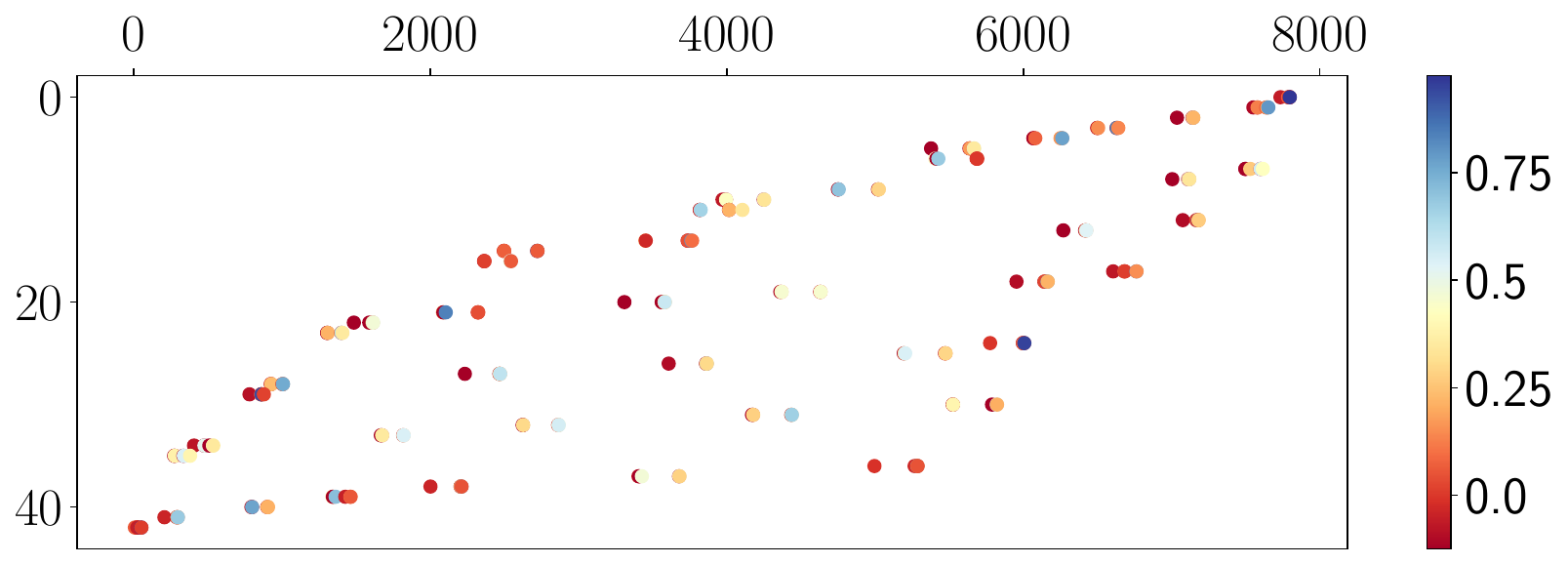}
    \quad
    \includegraphics[width=0.19\textwidth]{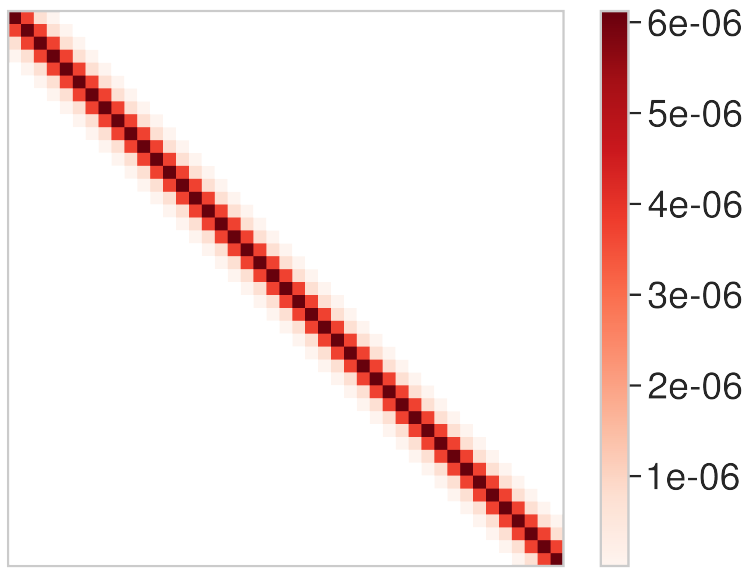}
    \quad
    \includegraphics[width=0.19\textwidth]{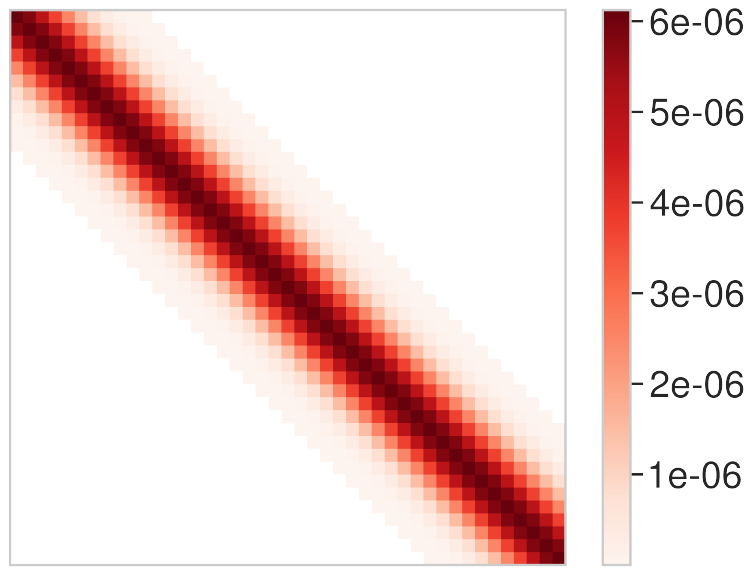}
    \caption{
      Observation operator $\ObsOperind{k}$ 
      and observation error covariance matrix $\mat{R}_k$ at 
      any observation time instance $t_k$.
      To construct synthetic observation error covariances $\mat{R}_k$, 
      we use a Gaspari--Cohn function~\eqref{eqn:Gaspari_Cohn} with a predefined
      length scale $\ell$.
      Left: space observation operator $\ObsOperind{K}$.
      Middle: observation error covariance matrix $\mat{R}_k$ 
      with length scale of $\ell\!=\!1$.
      Right: observation error covariance matrix $\mat{R}_k$
      with length scale of $\ell\!=\!3$.
      }
    \label{fig:Obs_Space_Covariance}
  \end{figure}

  In the numerical experiments we use synthetic observations and 
  consider two cases of observation correlations.
  First, we assume that observations are uncorrelated, 
  with fixed standard deviation $\sigma_{\rm obs}\!=\! 2.475\!\times\! 10^{-2}$. 
  This value is obtained by considering a noise level of $5\%$ of the maximum 
  concentration of the contaminant at all observation points, found 
  from a reference simulation, that is, the solution 
  of~\eqref{eqn:advection_diffusion}, over the simulation timespan $[0,T]$,
  with the initial condition (\Cref{fig:AD_Domain_Velocity}(right)).
  Second, we consider the case of space correlations where we run two sets of 
  experiments with block diagonal covariance matrices, 
  whose structure is shown in~\Cref{fig:Obs_Space_Covariance}.
  These are the covariances $\mat{R}_k$ between observation gridpoints at any 
  observation time $t_k$ and are constructed by using the fifth-order 
  piecewise-rational function~\eqref{eqn:Gaspari_Cohn}, with covariance
  length scales $\ell\!=\!1,\, \ell\!=\!3$, respectively. 
  We use~\eqref{eqn:Gaspari_Cohn} to create correlations
  $\decorrcoeff_{i,j}$ between $i$ and $j$ entries of the correlation matrix 
  and scale it by observation noise variance $\sigma_{\rm obs}^2$ to create 
  a spatial covariance model, as shown in~\Cref{fig:Obs_Space_Covariance}.
  The row/column ordering of the covariance matrix corresponds to the entries of
  an observation vector; see~\Cref{fig:AD_Domain_Velocity}(left).
  The spatial covariance is stationary and is fixed over time; that is, 
  $\mat{R}_k\!=\!\mat{R}\,,~ \forall k$.

\subsubsection{Forward and adjoint operators}\label{subsubsec:forward_adjoint_F}
  The forward operator $\F$ is evaluated by solving~\eqref{eqn:advection_diffusion} 
  over the simulation timespan, and the observation operator is applied 
  to the solution to extract solution values at the sensor locations 
  at the predefined observation time instances.
  We use a space-time formulation where the model states at all simulation 
  time points are stacked in one vector.
  A forward operator $\Find{0}{k}\equiv\Ftind{0}{k}$ maps the discretized model 
  parameter $\iparam$ to the measurements at observational gridpoints at time
  instance $t_k$.
  If we define $\Solind{0}{k}\equiv\Soltind{0}{k}$ to be the solution operator 
  over the interval $[t_0, t_k]$, and since $\ObsOperind{k}$ is the observation 
  operator at time instance $t_k$, then 
  $\Find{0}{k} \iparam = \ObsOperind{k}\Solind{0}{k} \iparam$. 
  In the experiments here, we use the same observational grid at all time 
  instances; that is, $\ObsOperind{k}=\ObsOperind{},\,\forall k$.
  Note that $\ObsOperind{}$ here is an interpolation operator from the model 
  gridpoints to observational gridpoints.

  A space-time observation vector is defined as 
  $\obs=\begin{bmatrix}\obs_1\tran, \obs_2\tran, \ldots, \obs_{\nobstimes}\tran\end{bmatrix}\tran$, where
  $\obs_k\equiv\obs[t_k]$ represents the sensor measurements at time instance
  $t_k$, with entries ordered as shown in~\Cref{fig:AD_Domain_Velocity}(left).
  The observation error covariance matrix $\Cobsnoise$ is defined accordingly 
  (see~\Cref{subsec:OED_sensor_placement} for more details). 
  Given this space-time formulation, we can rewrite the forward problem~\eqref{eqn:forward_problem} 
  as 
  \begin{equation}\label{eqn:forward_problem2}
    \begin{bmatrix}\obs_1\tran, \obs_2\tran, \ldots, \obs_{\nobstimes}\tran \end{bmatrix}\tran 
      = \begin{bmatrix}\left(\Find{0}{1} \iparam \right) \tran, \left(\Find{0}{2} \iparam\right)\tran, \ldots, \left(\Find{0}{\nobstimes} \iparam\right)\tran \end{bmatrix}\tran 
      + \vec{\delta}\,,\quad \vec{\delta} \sim \GM{\vec{0}}{\Cobsnoise} \,.
  \end{equation}

  When the observation noise is temporally uncorrelated, 
  the observation error covariance matrix $\Cobsnoise$ is 
  block diagonal, and the forward model can be written in the form
  \begin{equation}\label{eqn:forward_problem3}
    \obs_k = \Find{0}{k} \iparam  + \vec{\delta}_k\,,\quad 
      \vec{\delta}_k \sim \GM{\vec{0}}{\Cobsnoisemat_k} \,, \quad
      k=1,2,\ldots,\nobs \,.
  \end{equation}
  This formulation~\eqref{eqn:forward_problem3} of the forward problem 
  is advantageous for large-scale
  time-dependent problems because \textit{checkpointing} is essential for 
  scalability of the solution of both the Bayesian inverse problem and the OED.
  Forward and adjoint simulations scale efficiently by checkpointing the model 
  solution at observation time instances $t_k$ and by utilizing a model solution 
  operator $\Solind{k-1}{k}\equiv \Soltind{k-1}{k}$ to propagate the 
  model state over the simulation window $[t_{k-1}, t_k]$.
  %

  For a linear operator $\F$, the adjoint $\Fadj$ satisfies the property
  $\ip{\F\vec{x}}{\vec{y}}=\ip{\vec{x}}{\Fadj\vec{y}}$, where $\vec{x},\,\vec{y}$ 
  are elements of the space on which the product is defined.
  If $\F$ is defined on the Euclidean space equipped with inner product 
  $\ip{\vec{x}}{\vec{y}}:=\vec{x}\tran\vec{y}$, then 
  the adjoint of the forward operator is equal to the matrix transpose of 
  the discretized forward operator; that is, $\Fadj=\F\tran$.
  Here,  however, we use a finite-element discretization of the linear Bayesian 
  inverse problem where the underlying infinite-dimensional problem is 
  formulated on the space equipped with $L^2(\D)$ inner product.
  As explained in~\cite{Bui-ThanhGhattasMartinEtAl13}, the model adjoint is defined 
  by using the Euclidean inner product weighted by the finite-element mass matrix 
  $\mat{M}$. 
  Specifically, for $\vec{u} \in \Rnum^{\Nparam},\, \vec{v} \in \Rnum^{\Nobs}$, 
  it follows that 
  $\ip{\mat{F}\vec{u}}{\vec{v}} = (\mat{F}\vec{u})\tran \vec{v}
        = \vec{u}\tran \mat{F}\tran \vec{v} =
          \vec{u}\tran \mat{M} \mat{M}\inv \mat{F}\tran \vec{v}
        = \wip{\vec{u}}{\mat{M}\inv\mat{F}\tran\vec{v}}{\mat{M}}$, 
  resulting in the model adjoint $\F^* = \mat{M}\inv\mat{F}\tran$.
  In the time-dependent settings utilized here, the adjoint takes the form
  $\Fadjind{0}{k}=\mat{M}\inv \Solind{0}{k}\tran \ObsOperind{k}\tran$.

\subsubsection{Goal operator and its adjoint}\label{subsubsec:forward_adjoint_P}
The QoI  $\pred$ is the value of contaminant concentration predicted 
around the second building $\mathbf{B}_2$  (see~\Cref{fig:AD_Domain_Velocity}) 
at a future time instance $t_p$, beyond the simulation time. 
Specifically, we aim to predict the contaminant concentration withing a specific 
distance $\epsilon$ from the walls (i.e., boundary) of that building.
For that, we set $\epsilon\!=\!0.02$ and $t_p\!=\!2.2$, which results in 
a prediction vector of size $138$.
If we define $\mat{C}_p$ to be the matrix representation of a restriction operator 
that projects the solution at time $t_p$ onto the prediction gridpoints 
(shown in~\Cref{fig:PredGrid_True_Pred_and_Prior}(left)), 
then the goal operator is defined as $\Predmat=\mat{C}_p\, \Solind{0}{p}$.
Since we are using finite-element discretization, the adjoint of the goal operator 
is defined as 
$ \Predmat\adj = \mat{M}\inv \Solind{0}{p}\tran \, \mat{C}_p\tran\,$.

\subsubsection{The prior}
\label{subsubsec:prior}
  Following the setup in~\cite{attia2018goal,PetraStadler11}, 
  the prior distribution of the parameter $\iparam$ is $\GM{\iparb}{\Cparampriormat}$, 
  with $\Cparampriormat$ being a discretization of $\mathcal{A}^{-2}$, 
  where $\mathcal{A}$ is a Laplacian operator. 
  In particular, we use 
  $ \mathcal{A} = \delta I + \gamma  \Theta \nabla$, 
  where $\Theta$ is a symmetric positive definite tensor for anisotropic
  diffusion of the PDE~\eqref{eqn:advection_diffusion}, 
  $\delta\gamma$ governs the variance of the prior samples, and
  the ratio $\frac{\gamma}{\delta}$ governs the correlation length
  scale~\cite{AlexanderianPetraStadlerEtAl14,VillaPetraGhattas2016}.
  In our experiments we set $\gamma = 1$ and $\delta = 16$.
  The ground truth and the prior QoI (shown
  in~\Cref{fig:PredGrid_True_Pred_and_Prior}(middle)) 
  are obtained by applying the prediction 
  operator $\Predmat$ to the true and the prior initial condition, respectively.
  That is, $\pred_{\rm true}=\Predmat \iparam_{\rm true}$, and 
  the goal QoI $\pred$ assumes a Gaussian prior 
  $\GM{\Predmat\iparb}{\Predmat \Cparampriormat \Predmat\adj}$. The prior
  covariance matrix of the goal QoI, that is, $\Predmat \Cparampriormat
  \Predmat\adj$, is displayed in~\Cref{fig:PredGrid_True_Pred_and_Prior}(right).
  The effect of the prior on the resulting design is an important issue; 
  however, it is out of the scope of this work. Here we fix the prior across
  all experiments and focus on the performance due to observation covariance
  weighting.

  \begin{figure}[!ht]\vspace{-5pt}
    \centering
    \includegraphics[width=0.21\linewidth ,valign=t]{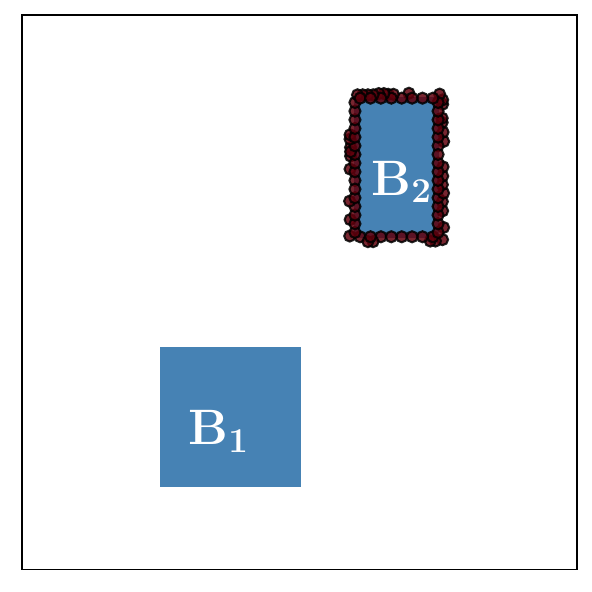}
    ~
    \includegraphics[width=0.47\textwidth,valign=t]{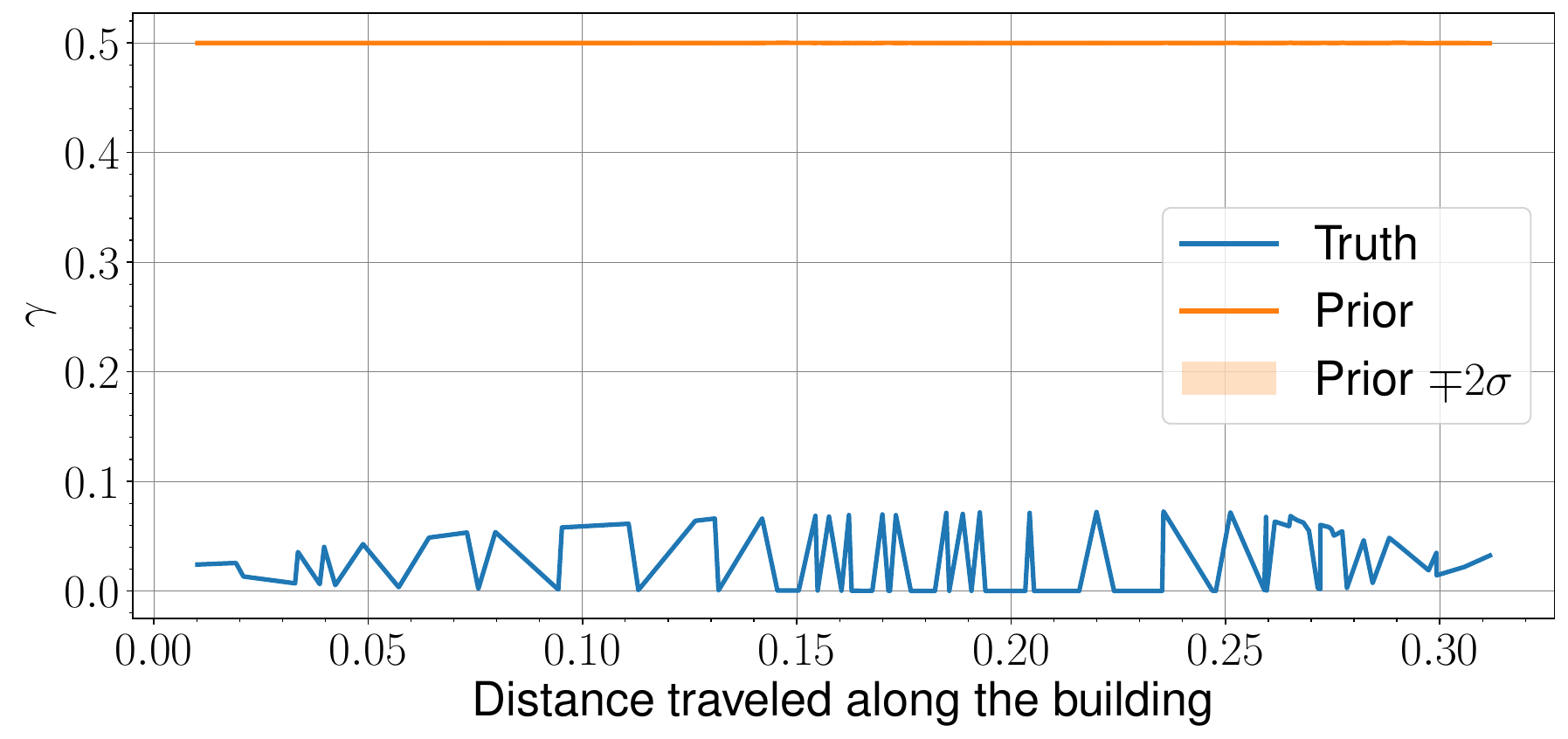}
    ~
    \includegraphics[width=0.24\textwidth ,valign=t]{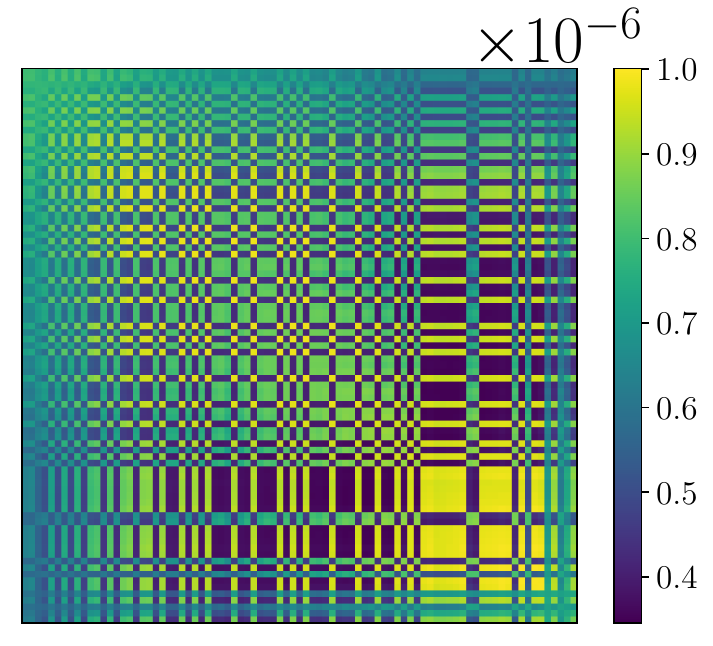}
    \caption{
      Left: gridpoints around the second building $\mathbf{B}_2$, at which 
        the goal QoI $\pred$ is predicted.
      Middle: ground truth and the prior of the goal QoI $\pred$.
      Here, the x-axis of the middle panel corresponds to the prediction gridpoints
      shown in the left panel reordered based on the distance traveled along the
      second building starting from the lower-left corner.
      Right: entries of the prior covariance matrix $\Predmat \Cparampriormat
      \Predmat\adj$, with rows/columns reordered to match the ordering on the
      x-axis of~\Cref{fig:PredGrid_True_Pred_and_Prior}(middle). 
      }
    \label{fig:PredGrid_True_Pred_and_Prior} 
  \end{figure}

  In the numerical results discussed in~\Cref{subsec:numerical_results}, 
  we  use $\pred_{\rm True}$ to calculate an accuracy measure of the solution 
  of an inverse problem.
  Specifically, we use the 
  root-mean-squared error (RMSE) of the QoI retrieved 
  by Bayesian inversion $\pred$ defined as 
  \begin{equation}\label{eqn:RAE_RMSE}
      {\rm RMSE} := \frac{\wnorm{ \pred -\pred_{\rm true}}{2} }{\sqrt{\Npred}} \,. 
  \end{equation}
  %

  \subsubsection{OED regularization}
    We employ an $\ell_1$ regularization term to induce a sparse design.
    Specifically, given the specific choice of the design weighting function $\weightfunc$, 
    the regularization term and the associated derivative take the
    form 
    \begin{equation}
      \Phi(\design) 
        = \norm{\weightfunc(\design, \design)}_1 
        = \sum_{i=1}^{\Nsens}{\left|\weightfunc(\design_i, \design_i)\right|}
        =  \sum_{i=1}^{\Nsens}{\weightfunc(\design_i, \design_i)} \,;
        \quad
        \nabla_{\design} \Phi(\design) =
        \sum_{i=1}^{\nobs}{\del{\weightfunc(\design_i,\design_i)}{\design_i}}\vec{e}_i
        \,.
    \end{equation}
    %

\subsubsection{Optimization algorithms}
To solve the OED problem~\eqref{eqn:oed_relaxed_optimization_problem}, we use Python's optimization routines provided in Scipy. 
Specifically, we use the implementation of L-BFGS-B~\cite{byrd1995limited,scipyopt_lbfgsb_web}, 
provided by \textsf{fmin\_l\_bfgs\_b()} with a stopping criterion based on the maximum entry of the projected gradient.
We set the tolerance to $\mathsf{pgtol}=1\times10^{-5}$.
This allows us to set bound constraints when needed, for example, when the
kernel~\eqref{eqn:cov-prod_kernel} is invoked.
For all choices of the weighting function, the initial guess passed to the optimization algorithm is set to yield
weighting values $\weightfunc(\design_i,\design_i)=0.5;\, i=1,2,\dots,\Nsens$.
This means that all sensors are as likely to be activated as to be turned off.

\subsection{Numerical results}
\label{subsec:numerical_results}
We investigate several scenarios. 
The first case is where no observation correlations are considered; 
then we incorporate spatial correlations.
The correlations are synthetically generated by using a Gaspari--Cohn function with a predefined length scale.
Note that this choice is made for convenience to create synthetic covariances and is independent from the
weighting function $\rho$ used to control the temporal relative importance of
observation covariances; see~\Cref{subsec:spacetime_dependent_OED}. 
We use A-optimality as the main criterion for sensor placement in our numerical experiments. 
As discussed in~\Cref{sec:OED_Correlated_OBS}, we can use a randomized trace 
estimator to formulate the optimality criterion and the associated gradient. 
In our experiments we used the Hutchinson randomized trace estimator 
to approximate the A-optimality criterion, 
and we set the sample size to $n_r=25$. 
This enabled us to carry  out several comparative experiments both accurately and efficiently. 
Numerical results validating this assertion are given in~\Cref{supp:subsec:randomized_trace_estimation}.
For fair comparison, we used the same realizations of random vectors to calculate the
objective, namely, the optimality criterion, and the gradients in all experiments.

Note that while the A-optimality criterion is calculated based on the sum of
the posterior variances discarding posterior covariances, it does in fact
account for observation correlations, since posterior variances are influenced
by both prior and observation error covariances (see~\eqref{eqn:Posterior_Params}).
Since the case with temporal observation correlations is not applicable here and 
since D-optimality might be more suitable for handling spatiotemporal observation correlations, 
we will leave the investigation of this case to future studies.
%

  We are  interested mainly in the design of an observational grid that will optimally serve the Bayesian retrieval 
  of the goal QoI. Specifically, we are looking for the optimal subset of candidate sensor locations to deploy 
  for data collection.
  We start by showing the results of solving the Bayesian inverse problem, with a fully deployed observational grid.
  \Cref{fig:IP_Prediction_AllSensors} shows the ground truth, the prior, and the posterior prediction QoI $\pred$ 
  obtained by solving the Bayesian inverse problem with all sensors activated. 
  The results are shown for the case where no observation correlations are assumed, 
  that is, when the correlation length scale is  $\ell=0$, and two experiments with space correlations with $\ell$ 
  set to $1$ and $3$, respectively.
  This plot will serve as a visual benchmark as needed.
  \begin{figure}[!ht]\vspace{-5pt}
    \centering
    \includegraphics[width=0.60\linewidth]{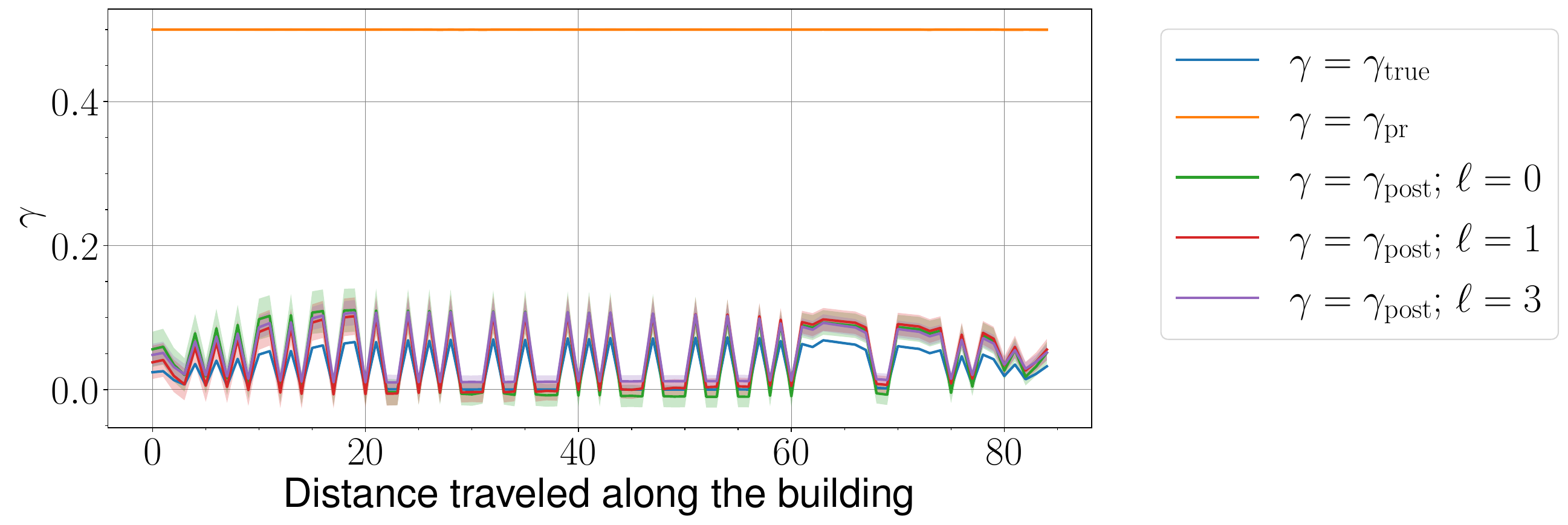}
    \caption{The truth, the  prior, and the posterior ($\pm2\sigma_{\rm post}$) prediction QoI $\pred$.
      The truth is based on the ground truth of the model initial condition.
      The prior QoI is based on the mean of the prior distribution for all scenarios.
      The posterior prediction QoI is obtained by using the solution of the inverse problem, 
      using all candidate sensors.
      The correlation length scale used in the Gaspari--Cohn function to construct correlations is indicated in the legends.
      The case when $\ell=0$ corresponds to the diagonal covariance matrix, that is,   no correlations, is assumed.
      }
    \label{fig:IP_Prediction_AllSensors}
  \end{figure}
  %

  The goal of solving an OED problem is to find a small subset of sensors (e.g.,  given a limited budget)
  that yields a posterior QoI as close as possible to the ground truth shown 
  in~\Cref{fig:PredGrid_True_Pred_and_Prior}(right), with minimum uncertainty.
  In what follows, we assume a maximum budget of $\budget$ observation sensors to be ideally placed in the domain. 
  The solution of the OED problem is expected to be sparse but might not be binary.
  A simple rounding method to obtain a binary design is used when needed, 
  by activating the sensors corresponding to the largest $\budget$ weights
  in the design resulting from solving an OED optimization problem. 
  In our experiments we show results for two choices of the budget $\budget$: 
  first $\budget=8$ and second 
  $\budget=\Nsens/2$, where we allow up to $50\%$ of the sensors to be activated.
  We discuss results obtained by solving the OED
  problem~\eqref{eqn:oed_relaxed_optimization_problem} with the A-optimality
  criterion $\Psi(\design)\!=\!\Psi^\GA$,
  using the product kernel~\eqref{eqn:cov-prod_kernel}, 
  the EXP kernel~\eqref{eqn:exp-prod_kernel}, and the
  logistic sigmoid~\eqref{eqn:sigmoid-prod_kernel} kernels.
  Experiments are carried out for multiple values of the regularization parameter $\alpha$, to study the 
  effect of the regularization term on both  sparsity and the order of relative importance of candidate sensors.
  We start with numerical experiments with uncorrelated observation errors, and then
  we show experiments with spatial observation correlations with multiple 
  length scales, as described in~\Cref{sec:numerical_experiments}.

  \subsubsection{No-correlations results}
  \label{subsec:no_corr_results}
    \Cref{fig:OED_No_Correlation_A_Optimal_Designs} shows 
    the optimal weights resulting from solving the standard OED 
    relaxation~\eqref{eqn:oed_relaxed_optimization_problem} with ad hoc
    fix~\eqref{eqn:standard_prepost_fixed} and the OED problem using the
    Schur-product formulation~\eqref{eqn:A_optimality_Schur} with the
    weighting matrix $\designmat(\design)$ defined using the 
    kernel~\eqref{eqn:cov-prod_kernel}, the EXP kernel~\eqref{eqn:exp-prod_kernel}, and the
    logistic sigmoid kernels~\eqref{eqn:sigmoid-prod_kernel}, respectively, 
    for multiple values of the penalty parameter $\alpha$.
    All formulations behave similarly and yield similar estimates of the
    optimal design. 
    Note that in this setup the sigmoid weighting kernel~\eqref{eqn:sigmoid-prod_kernel} leads to
    a sparser design  even with a
    penalty value $\alpha=0$, that is, without enforcing a sparsity-promoting
    penalty term.
    \begin{figure}[!ht]\vspace{-10pt}
    \centering
      \includegraphics[width=0.24\linewidth, trim=80 80 0 0, clip]{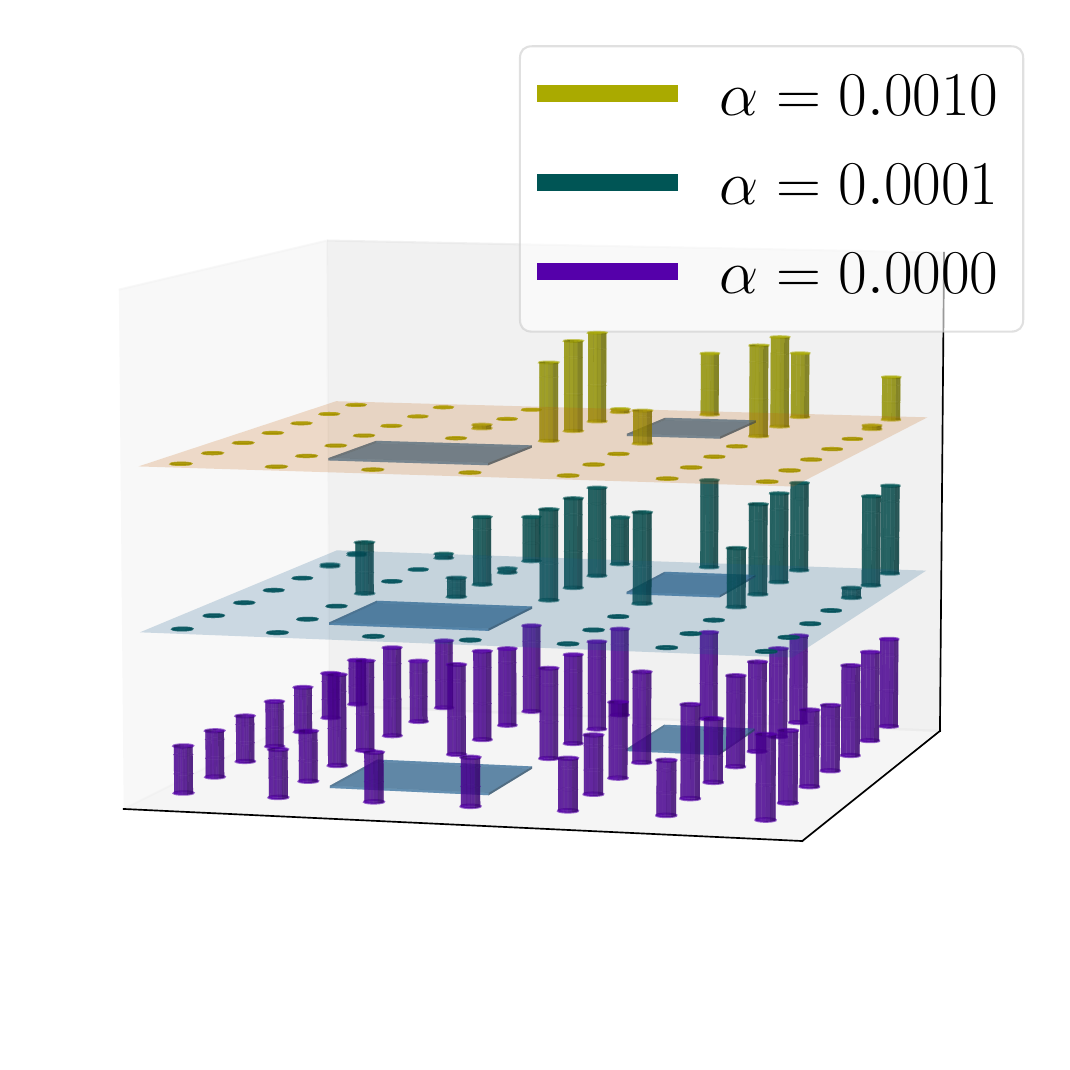} 
      \includegraphics[width=0.24\linewidth, trim=80 80 0 0, clip]{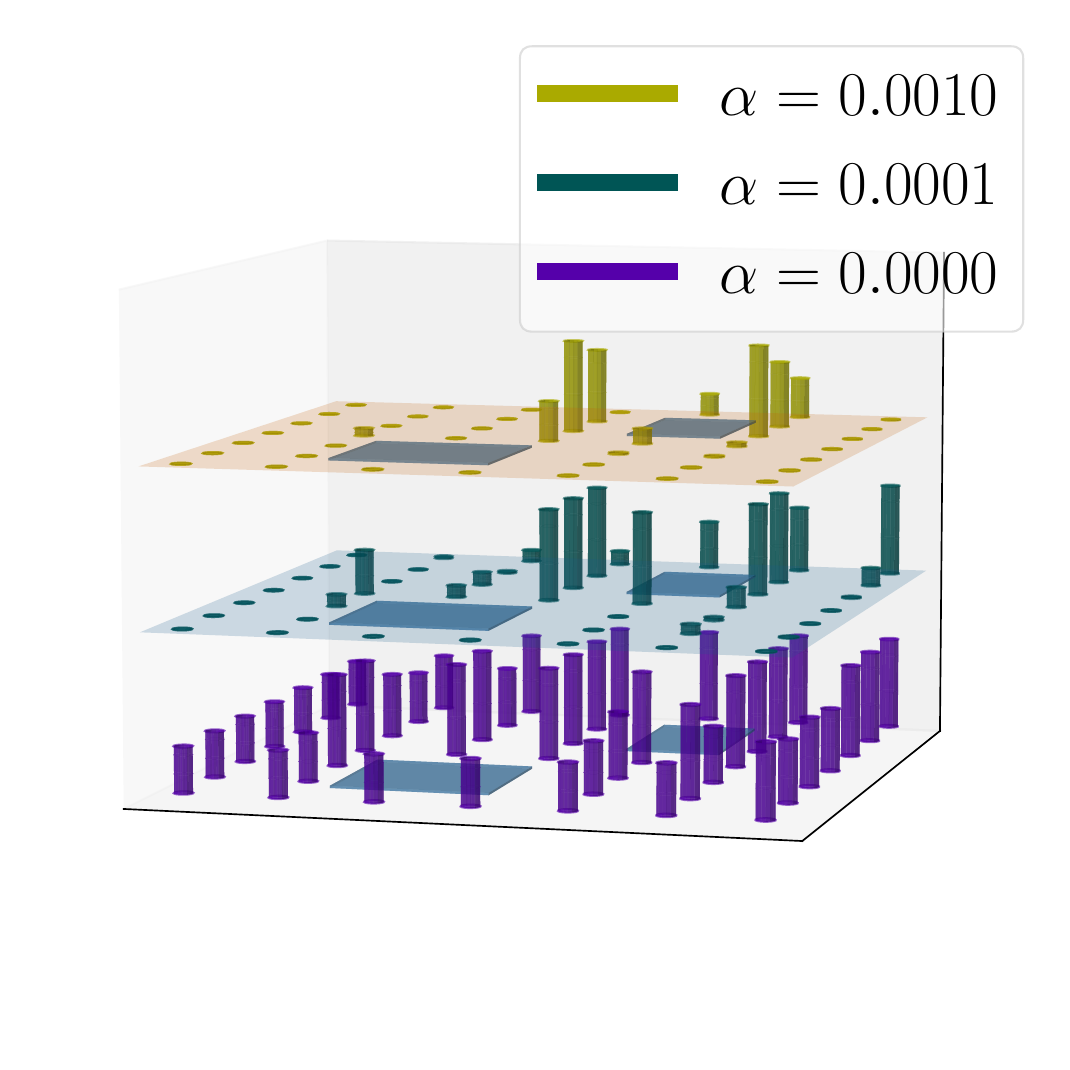}
      \includegraphics[width=0.24\linewidth, trim=80 80 0 0, clip]{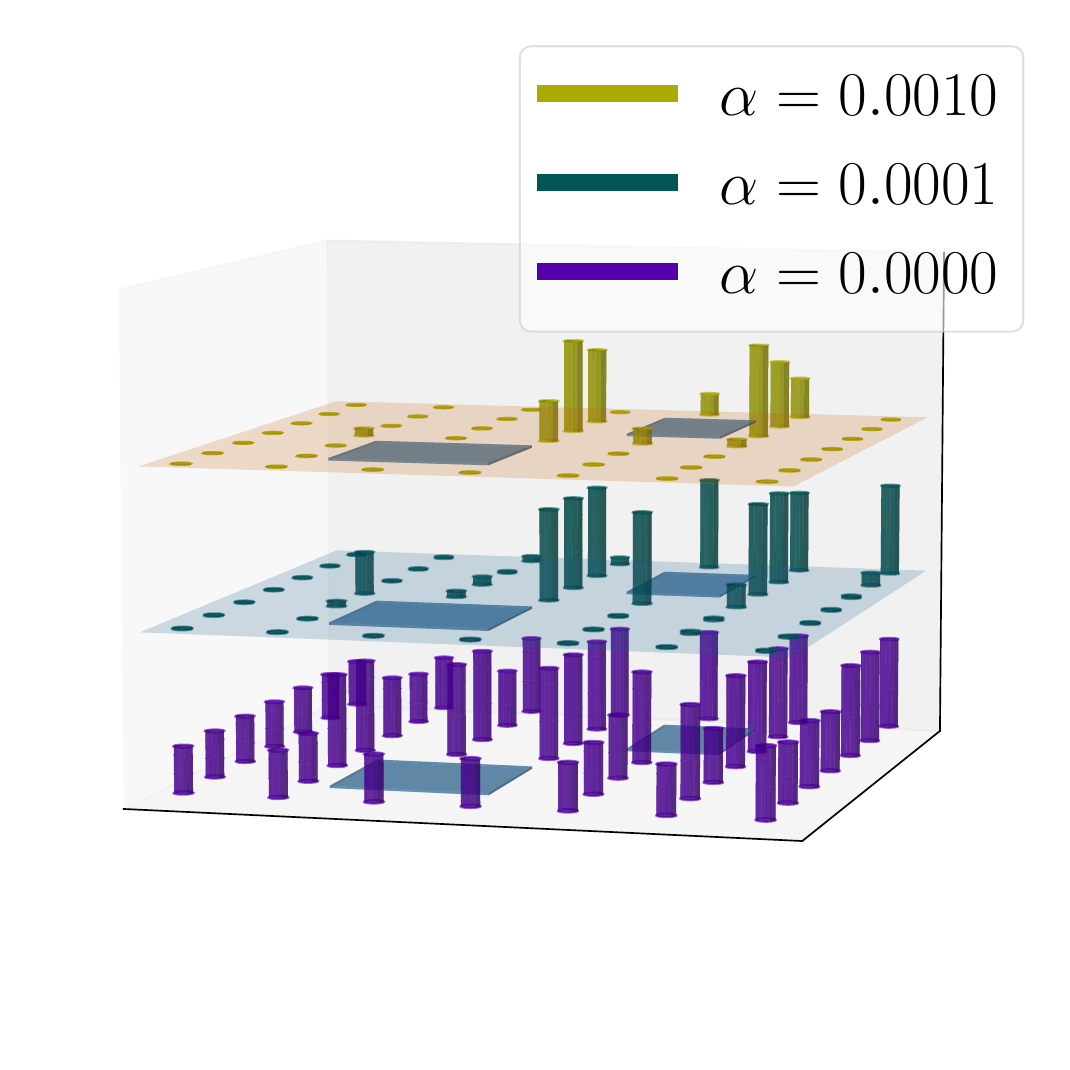}
      \includegraphics[width=0.24\linewidth, trim=80 80 0 0, clip]{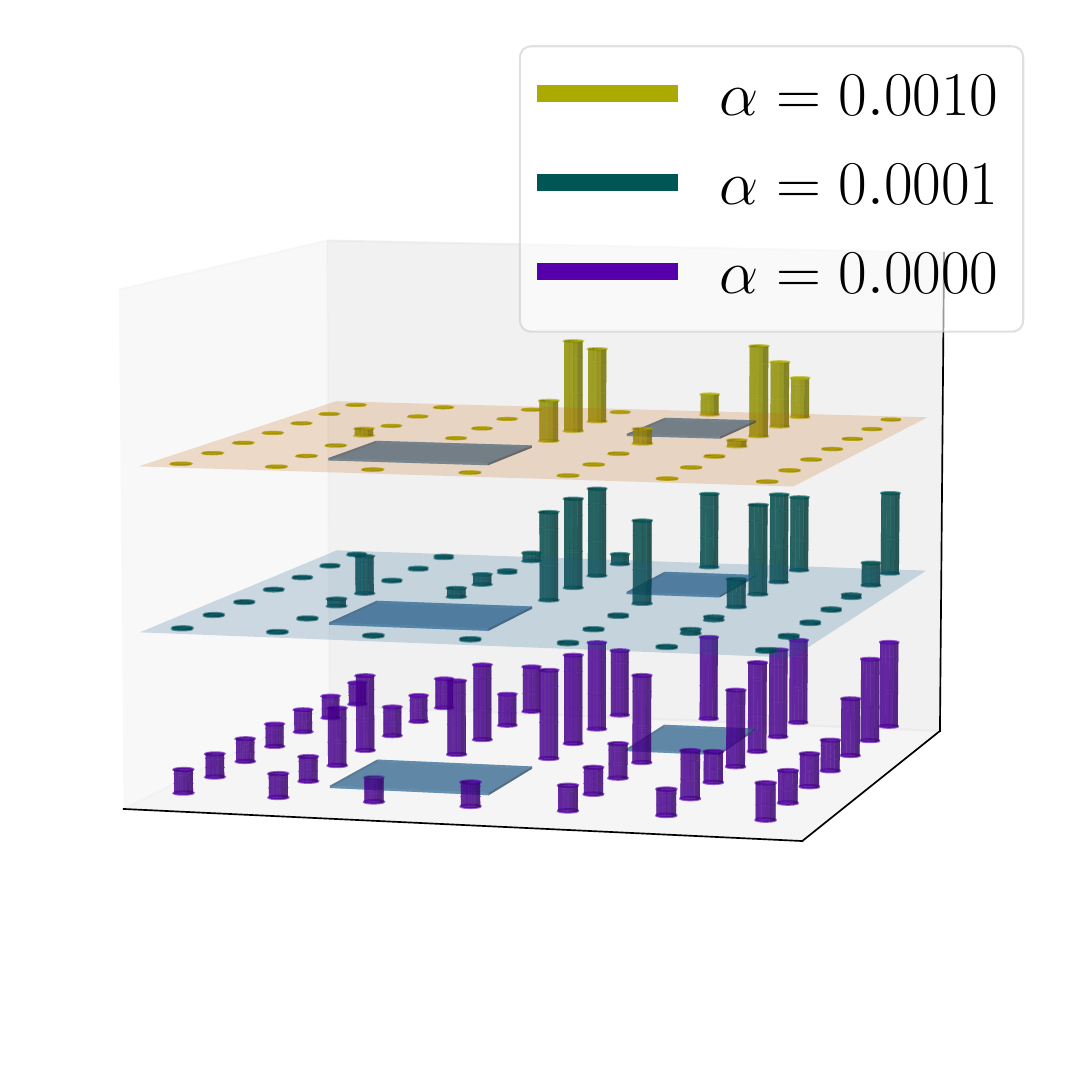}
      \caption{Optimal weights resulting from solving the OED problem~\eqref{eqn:oed_relaxed_optimization_problem}.
        Results are obtained (from left to right) by using the standard OED
        formalism~\eqref{eqn:oed_relaxed_optimization_problem} with ad hoc
        fix~\eqref{eqn:standard_prepost_fixed}, Schur-OED with weighting
        kernel~\eqref{eqn:cov-prod_kernel}, the EXP kernel~\eqref{eqn:exp-prod_kernel}, 
        and the logistic sigmoid kernels~\eqref{eqn:sigmoid-prod_kernel}, respectively, 
        for values of the parameter $\alpha$ set to $0,\, 1e\!-4,\, 1e\!-3$.
        The solution is plotted in the form of bars located on the grid at the corresponding 
        candidate sensor locations. 
        The height of each bar is set to the value of the weight obtained by
        solving~\eqref{eqn:oed_relaxed_optimization_problem}.
        The $z-$axis has a scale (limits) set to $[0, 1]$ for all subplots to
        fairly compare the values of weights resulting by solving the relaxed
        OED problem.
        }
      \label{fig:OED_No_Correlation_A_Optimal_Designs}
    \end{figure}

    To better inspect the difference between the resulting optimal designs shown 
    in~\Cref{fig:OED_No_Correlation_A_Optimal_Designs}, 
    we apply rounding to obtain a binary design by activating the maximum
    $\lambda=8$ sensors; the resulting binary designs are shown in~\Cref{fig:OED_No_Correlation_A_Optimal_Designs_Max8}.
    \begin{figure}[!ht]\vspace{-10pt}
    \centering
      \includegraphics[width=0.24\linewidth, trim=80 80 0 0, clip]{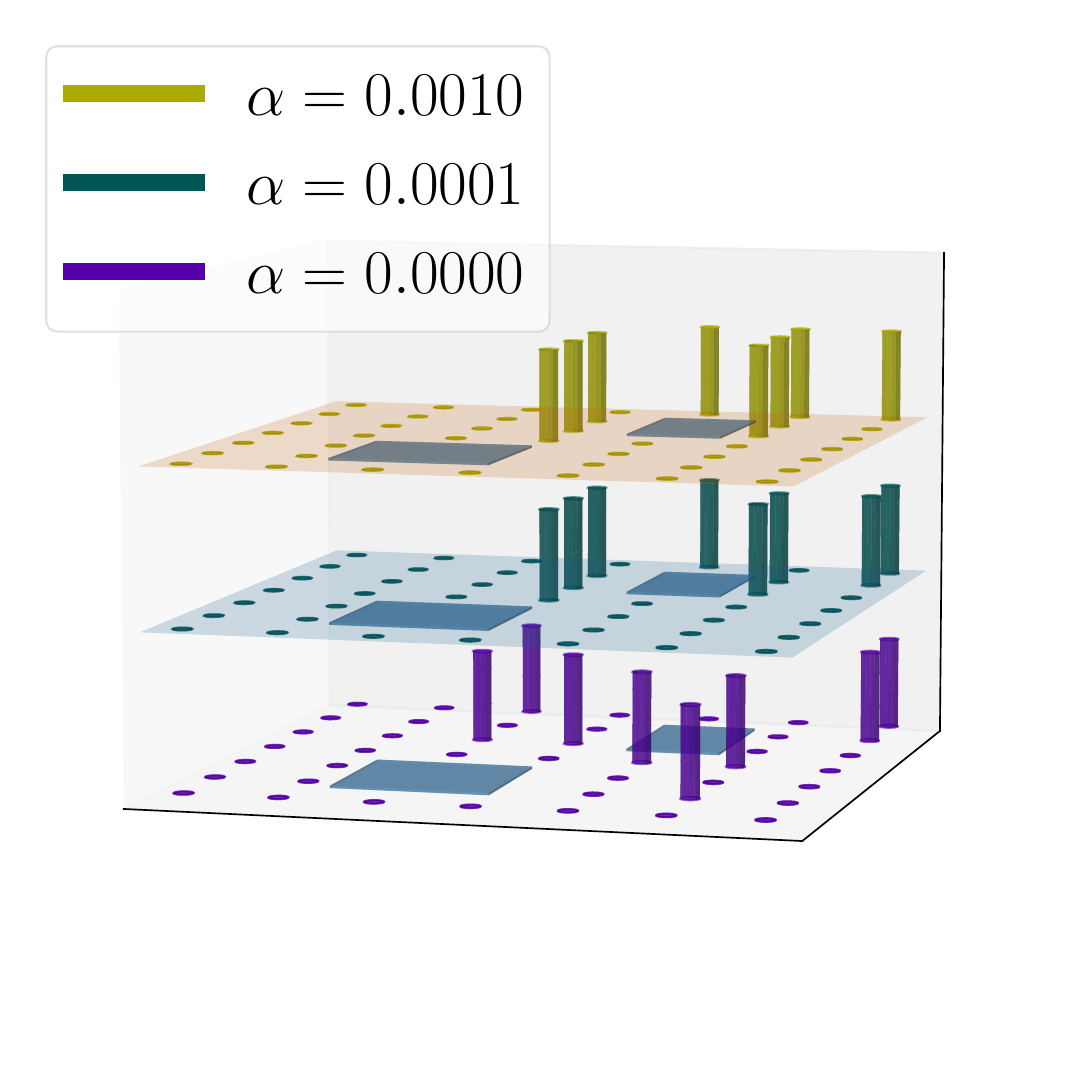} 
      \includegraphics[width=0.24\linewidth, trim=80 80 0 0, clip]{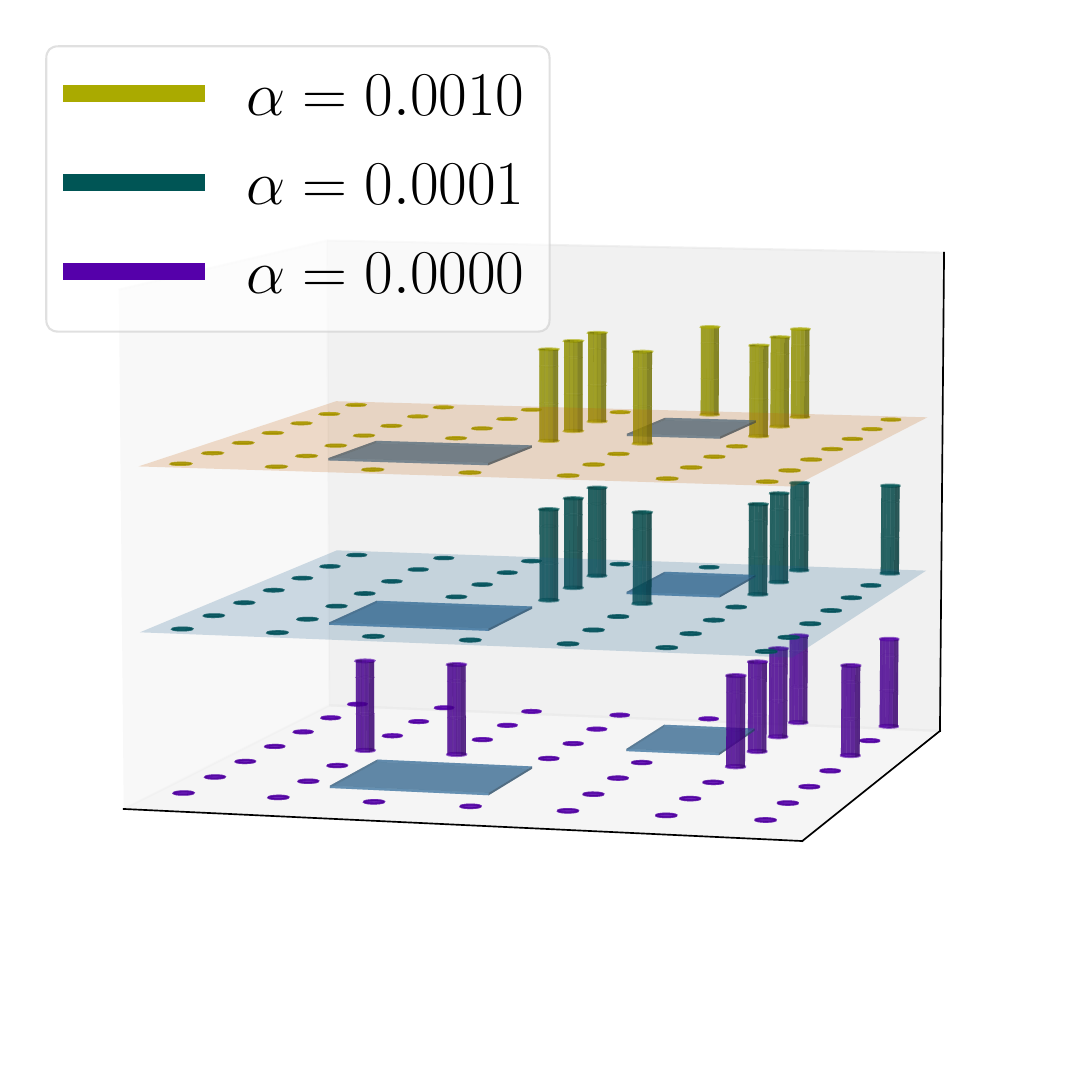}
      \includegraphics[width=0.24\linewidth, trim=80 80 0 0, clip]{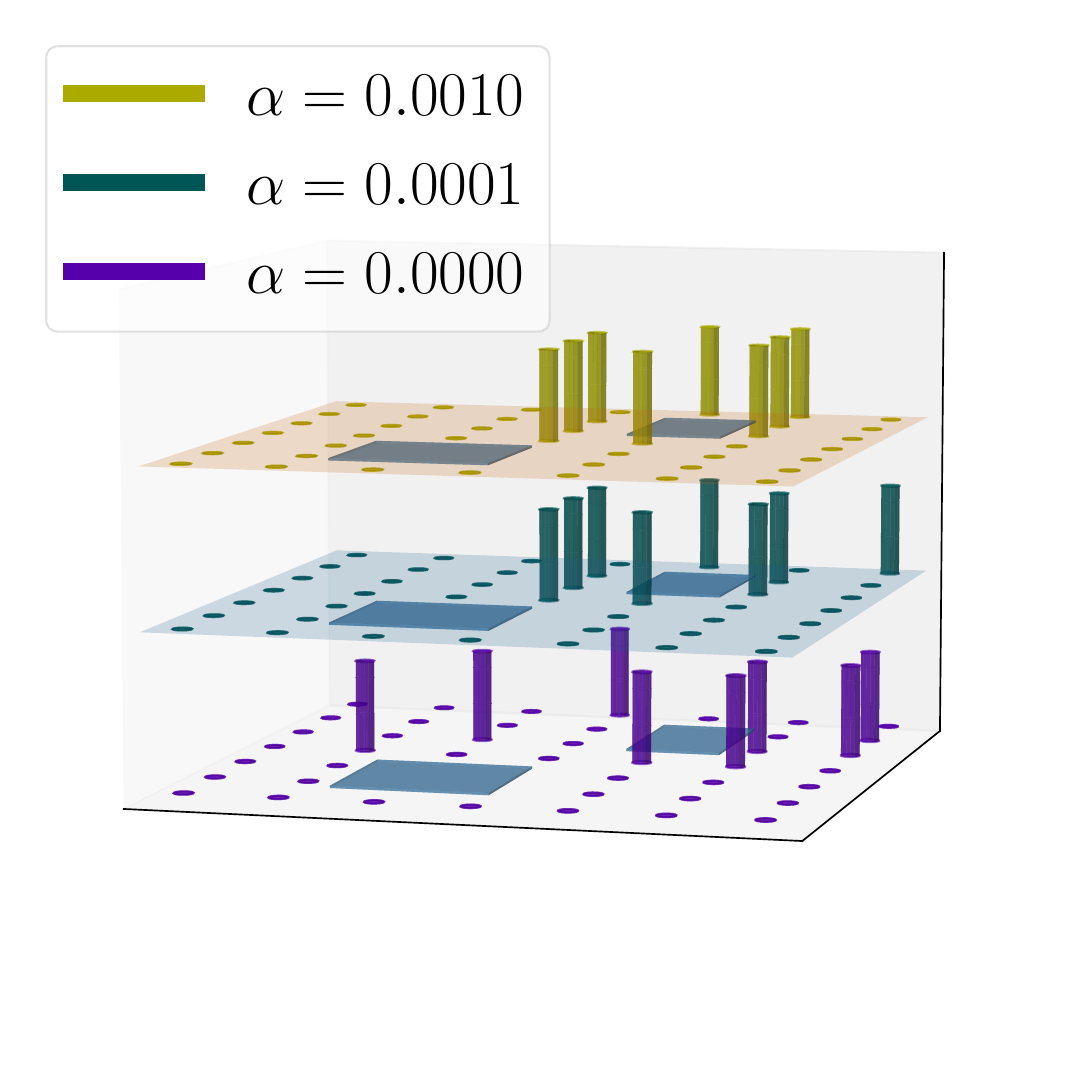}
      \includegraphics[width=0.24\linewidth, trim=80 80 0 0, clip]{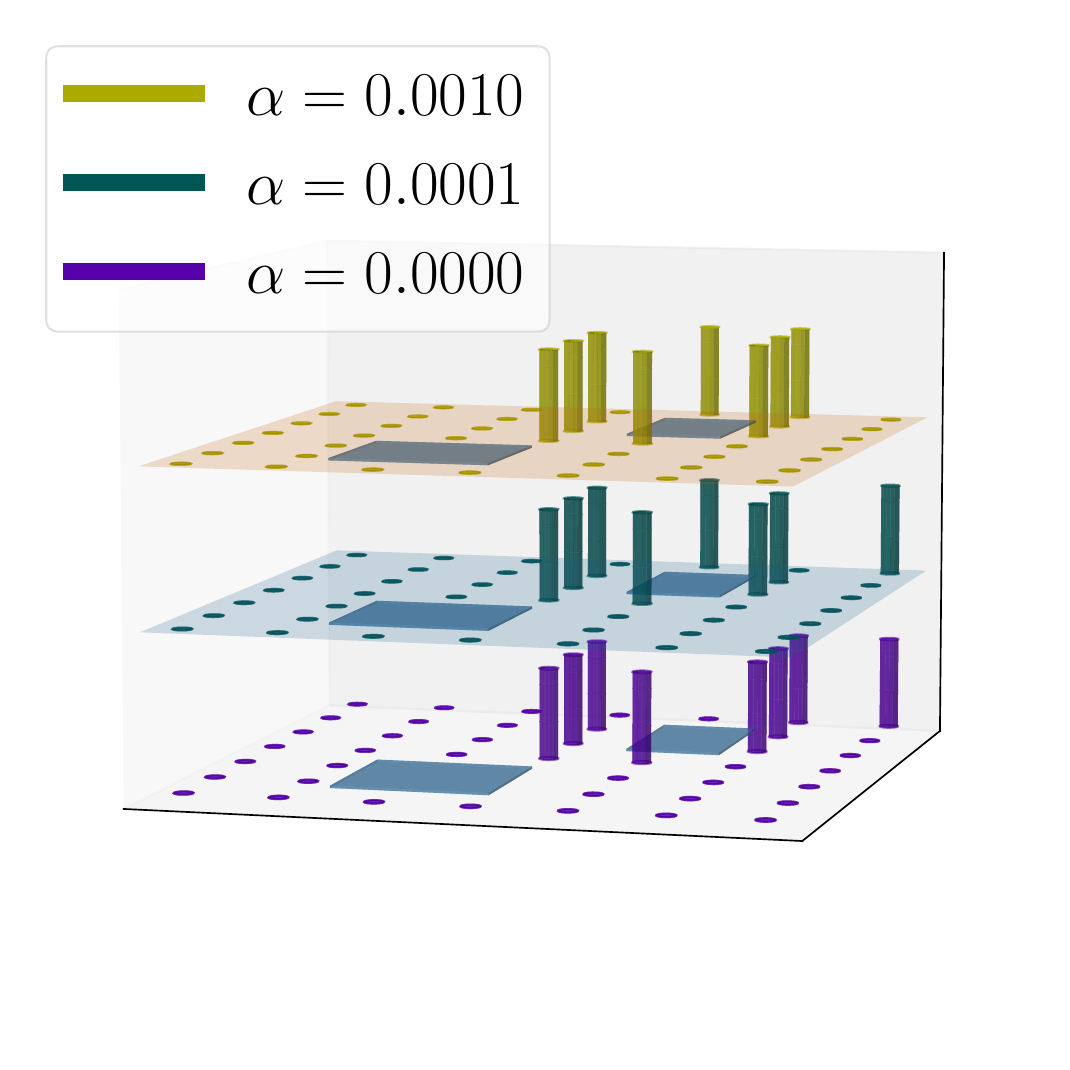}
      \caption{ Similar to \Cref{fig:OED_No_Correlation_A_Optimal_Designs}.
        Here we show  the binary design corresponding to the highest $\budget=8$
        weights.
        }
      \label{fig:OED_No_Correlation_A_Optimal_Designs_Max8}
    \end{figure}
    The results in~\Cref{fig:OED_No_Correlation_A_Optimal_Designs}
    and~\Cref{fig:OED_No_Correlation_A_Optimal_Designs_Max8} indicate that, in general, 
    increasing the value of the penalty parameter enforces sparsification
    on the solution of the OED problem. 
    However, the optimal weights, the ranking, and the sparsity levels 
    obtained by the four kernels are not identical. 
    It would be hard, however, to judge which solution is better without inspecting two elements: 
    the trace of the posterior covariance (i.e., the OED optimality criterion) and 
    the quality of the solution of an inverse problem obtained by reconfiguring the observational grid based on the optimal weights obtained.

    To be able to fairly judge the quality of the A-optimal designs, we inspect
    the
    value of the optimality criterion corresponding to the optimal solution.
    \Cref{tbl:Posterior_Trace_No_Correlation}
    shows the value of the A-optimality criterion, namely, the trace of the posterior covariance matrix,
    obtained by the optimal design before and after thresholding for multiple values of the penalty parameter $\alpha$.
    Results are obtained (from left to right) by using the standard OED
    formalism~\eqref{eqn:oed_relaxed_optimization_problem} with ad hoc
    fix~\eqref{eqn:standard_prepost_fixed},
    and the Schur-OED formulation~\eqref{eqn:A_optimality_Schur} with the
    weighting matrix $\designmat(\design)$ defined using the 
    kernel~\eqref{eqn:cov-prod_kernel}, the EXP kernel~\eqref{eqn:exp-prod_kernel}, and the
    logistic sigmoid kernels~\eqref{eqn:sigmoid-prod_kernel}, respectively. 
    For each choice of the weighting kernel, the first column shows the value of the posterior covariance trace obtained by setting the design to 
    the solution of the relaxed OED problem~\eqref{eqn:oed_relaxed_optimization_problem}, 
    that is, $\Psi^\GA(\design)=\Trace{\Cpredpost(\design)}$ with
    $\design=\design\opt$.
    The results in the second column correspond to the value of the optimality
    criterion $\Psi^\GA(\design^{\rm b})$, where $\design^{\rm b}$ is a binary design obtained by
    setting the entries of $\design\opt$ corresponding to the highest $\budget=8$ weights to $1$ and everything else to zero.
    The third column is similar to the second column, but the binary design is
    obtained by activating the sensors corresponding to the highest $\Nsens/2$
    weights.
    The results of both the standard and the Schur-product OED formulations are
    almost identical, with similar estimates of the optimal design.
    %
    \begin{table}[!ht]
      \caption{Value of the A-optimality criterion, namely, the
        trace of the posterior covariance matrix
        $\Psi^\GA(\design)=\Trace{\Cpredpost(\design)}$, multiplied by $10^3$,
        obtained by the optimal design, before and after thresholding, 
        for multiple values of the penalty parameter.
        Here, observations are assumed to be uncorrelated. 
        Two thresholding/rounding approaches are used: 
          (1) ``Max 8,'' in which the $\budget=8$ sensors with highest weights are
          activated, and 
          (2) ``Median,'' in which all sensors with weights above the median value of the optimal weights are activated.
        }\label{tbl:Posterior_Trace_No_Correlation}
      \centering\pgfkeys{/pgf/number format/fixed,
            /pgf/number format/precision=2}
    \resizebox{0.99\linewidth}{!}{%
      \begin{tabular}{|c||E|E|E||E|E|E||E|E|E||E|E|E||}
      \hline
        \multirow{3}{*}{\parbox{0.9in}{\centering Regularization Penalty}} 
          & \multicolumn{3}{c||}{Standard
            OED~(\ref{eqn:oed_relaxed_optimization_problem},~\ref{eqn:standard_prepost_fixed})}
          & \multicolumn{3}{c||}{Product Kernel~\eqref{eqn:cov-prod_kernel}}
          & \multicolumn{3}{c||}{EXP Kernel~\eqref{eqn:exp-prod_kernel}}
          & \multicolumn{3}{c||}{Sigmoid Kernel~\eqref{eqn:sigmoid-prod_kernel}}
          \\ \cline{2-13} 
          & \multicolumn{1}{c|}{A-optimal}
          & \multicolumn{2}{c||}{Thresholded Design} 
          & \multicolumn{1}{c|}{A-optimal}
          & \multicolumn{2}{c||}{Thresholded Design} 
          & \multicolumn{1}{c|}{A-optimal}
          & \multicolumn{2}{c||}{Thresholded Design} 
          & \multicolumn{1}{c|}{A-optimal}
          & \multicolumn{2}{c||}{Thresholded Design} 
          \\ \cline{3-4} \cline{6-7}  \cline{8-10} \cline{11-13} 
            & \multicolumn{1}{c|}{Design}
            & \multicolumn{1}{c|}{Max 8}
            & \multicolumn{1}{c||}{Median}   
            & \multicolumn{1}{c|}{Design}
            & \multicolumn{1}{c|}{Max 8}
            & \multicolumn{1}{c||}{Median}   
            & \multicolumn{1}{c|}{Design}
            & \multicolumn{1}{c|}{Max 8}
            & \multicolumn{1}{c||}{Median}   
            & \multicolumn{1}{c|}{Design}
            & \multicolumn{1}{c|}{Max 8}
            & \multicolumn{1}{c||}{Median}     \\ \hline
          $\alpha=0$     & 0.00896 & 0.01439 & 0.00939 & 0.00896 & 0.04589 & 0.00940 & 0.00896 & 0.04589 & 0.00940 & 0.00906 & 0.01052 & 0.00940  \\ \hline
          $\alpha=1e-4$  & 0.00924 & 0.01386 & 0.00946 & 0.00936 & 0.01052 & 0.00942 & 0.00935 & 0.01046 & 0.00940 & 0.00939 & 0.01046 & 0.00940  \\ \hline
          $\alpha=3e-4$  & 0.00964 & 0.01113 & 0.00948 & 0.00988 & 0.01039 & 0.00948 & 0.00992 & 0.01048 & 0.00940 & 0.00991 & 0.01048 & 0.00948  \\ \hline
          $\alpha=5e-4$  & 0.01001 & 0.01113 & 0.00985 & 0.01029 & 0.01048 & 0.00948 & 0.01030 & 0.01048 & 0.00940 & 0.01035 & 0.01048 & 0.00940  \\ \hline
          $\alpha=1e-3$  & 0.01069 & 0.01113 & 0.00989 & 0.01125 & 0.01048 & 0.00965 & 0.01125 & 0.01048 & 0.00953 & 0.01126 & 0.01048 & 0.00948  \\ \hline
      \end{tabular}%
      } 
    \end{table}
    %
   
    Now we turn our attention to the quality of the solution of an inverse
    problem (MAP point) obtained by reconfiguring the observational grid based on the
    optimal design.
    First, the observational grid is reconfigured as follows. We solve the A-OED problem to obtain the optimal
    design vector $\design\opt$; and then we calculate the vector of optimal weights
    $\varweightfunc(\design\opt, \design\opt)$,  which is then rounded to yield a binary
    design $\design^{\rm b}$. 
    The resulting binary design $\design^{\rm b}$ is used to reconfigure the
    observation grid, the observation operator---hence the forward operator
    $\F$---and the observation error covariance matrix. The inverse problem is
    then solved by using the reconfigured settings.
    \Cref{tbl:RMSE_No_Correlation} shows the RMSE results obtained by 
    solving the inverse problem using the binary designs corresponding
    to optimal designs with objective values shown in~\Cref{tbl:Posterior_Trace_No_Correlation}.
    We also show the RMSE results corresponding to the prior QoI and the
    solution of the inverse problem with all sensors activated, setting a
    benchmark of the RMSE values.
    The four settings yield optimal (rounded) designs that result
    in high-quality solutions of the inverse problems as suggested by the small
    RMSE values. 
    These results are also supported by the results 
    in~\Cref{fig:Restricted_IP_No_Correlation_Max8_Penalty_0.0001}, which
    displays the solution of the inverse problem along with the associated posterior
    uncertainties.
    Thus, in the case of uncorrelated observational errors, both the standard
    (pre- and postmultiplication of the precision matrix with the relaxed
    design) and the Schur-OED formalism behave almost identically except that 
    the Schur-OED optimization problem with logistic kernel is solved without
    enforcing bound constraints on the design.
    Note that by allowing more sensors to be deployed, additional data is
    collected, and hence 
    more information is gained. This is supported by the decrease in
    both objective values (\Cref{tbl:Posterior_Trace_No_Correlation}) and 
    the RMSE results (\Cref{tbl:RMSE_No_Correlation}) when the number
    of sensors is increased  from ``Max 8'' to ``Median'' rounding.
    %
    \begin{table}[!ht]
      \caption{Similar to~\Cref{tbl:Posterior_Trace_No_Correlation}. 
      Here we show the RMSE results obtained by solving the inverse problem 
      given the optimal design.
        }\label{tbl:RMSE_No_Correlation}
      \centering\pgfkeys{/pgf/number format/fixed,
            /pgf/number format/fixed zerofill,
            /pgf/number format/precision=2}
    \resizebox{0.80\linewidth}{!}{%
      \begin{tabular}{|c||c|c||c|c||c|c||c|c||c|c||}
      \hline
        \multirow{3}{*}{\parbox{0.9in}{\centering Regularization Penalty}} 
        & \multicolumn{10}{c||}{RMSE}
        \\ \cline{2-11}
          & \multicolumn{2}{c||}{All Sensors}
          & \multicolumn{2}{c||}{Standard
            OED~(\ref{eqn:oed_relaxed_optimization_problem},~\ref{eqn:standard_prepost_fixed})}
          & \multicolumn{2}{c||}{Product Kernel~\eqref{eqn:cov-prod_kernel}}
          & \multicolumn{2}{c||}{EXP Kernel~\eqref{eqn:exp-prod_kernel}}
          & \multicolumn{2}{c||}{Sigmoid Kernel~\eqref{eqn:sigmoid-prod_kernel}}
          \\ \cline{2-11}
            & \multicolumn{1}{c|}{Prior}
            & \multicolumn{1}{c||}{Posterior}   
            & \multicolumn{1}{c|}{Max 8}
            & \multicolumn{1}{c||}{Median}   
            & \multicolumn{1}{c|}{Max 8}
            & \multicolumn{1}{c||}{Median}   
            & \multicolumn{1}{c|}{Max 8}
            & \multicolumn{1}{c||}{Median}   
            & \multicolumn{1}{c|}{Max 8}
            & \multicolumn{1}{c||}{Median}     \\ \hline
            $\alpha=0$     & \multicolumn{1}{c|}{\multirow{5}{*}{0.4704}} & \multicolumn{1}{c||}{\multirow{5}{*}{0.0258}}  
                               & 0.02751 & 0.02610 & 0.05295 & 0.02798 & 0.05295 & 0.02798 & 0.04048 & 0.02799  \\ \cline{1-1} \cline{4-11}
            $\alpha=1e-4$  & & & 0.04154 & 0.02717 & 0.04048 & 0.02724 & 0.03984 & 0.02714 & 0.03984 & 0.02714  \\ \cline{1-1} \cline{4-11}
            $\alpha=3e-4$  & & & 0.03966 & 0.02805 & 0.03190 & 0.02971 & 0.04040 & 0.02797 & 0.04040 & 0.02971  \\ \cline{1-1} \cline{4-11}
            $\alpha=5e-4$  & & & 0.03966 & 0.03352 & 0.04040 & 0.02967 & 0.04040 & 0.02797 & 0.04040 & 0.02797  \\ \cline{1-1} \cline{4-11}
            $\alpha=1e-3$  & & & 0.03966 & 0.03364 & 0.04040 & 0.03136 & 0.04040 & 0.03028 & 0.04040 & 0.02971  \\ \hline
      \end{tabular}%
      } 
    \end{table}
    %
    
    \begin{figure}[!ht]\vspace{-5pt}
      \centering
        \includegraphics[width=0.75\linewidth]{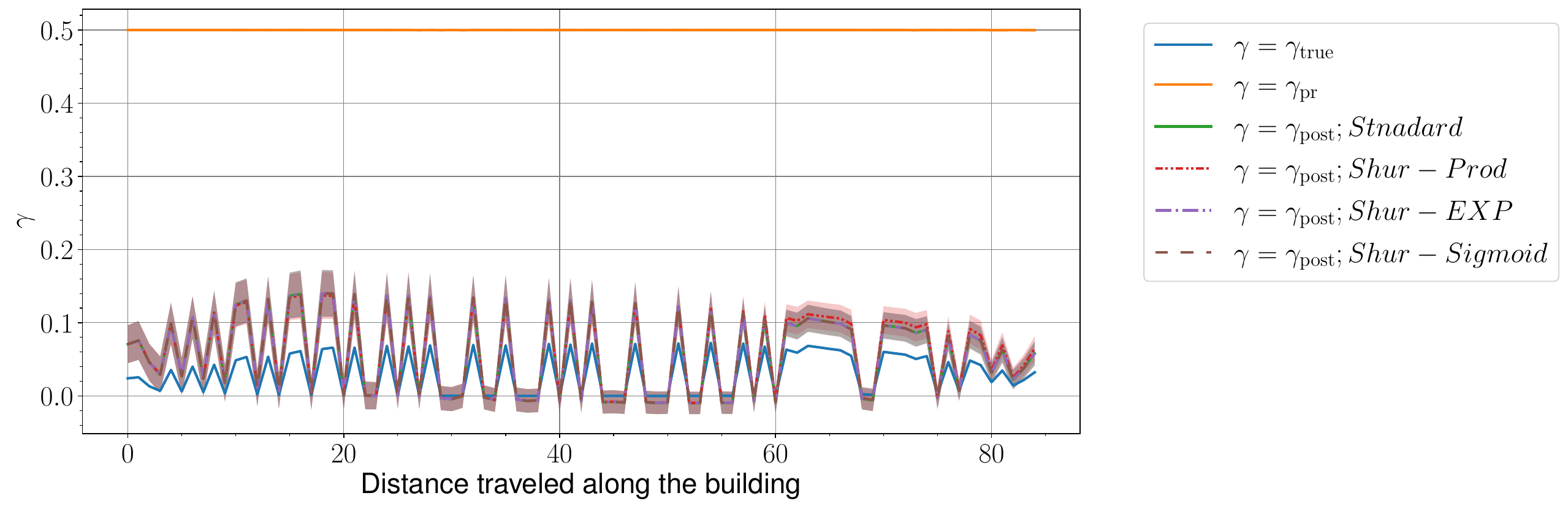}
        \caption{Solution of the inverse problem, along with associated
          uncertainty ($\pm2\sigma_{\rm post}$), obtained based on the rounded
          A-optimal design resulting from solving
          the OED problem using settings described in~\Cref{fig:OED_No_Correlation_A_Optimal_Designs}.
          Here, observations are assumed to be uncorrelated, and we show results obtained with $\alpha=1e-4$
          and with $\budget=8$ sensors  activated. 
          Inversion results here correspond to the second row of results in~\Cref{tbl:RMSE_No_Correlation}.
          }
        \label{fig:Restricted_IP_No_Correlation_Max8_Penalty_0.0001}
      \end{figure}
      %

  \subsubsection{Space-correlations results}
  \label{subsec:space_corr_results}
  Here we show the results of experiments carried out with spatially correlated observations.
  \Cref{fig:OED_Correlation_LengthScale_1_Weights_Max8} shows the rounded
  binary designs obtained by setting the correlation length scale to $\ell=1$,
  and~\Cref{fig:OED_Correlation_LengthScale_3_Weights_Max8} shows binary designs
  for $\ell=3$. Similar to the discussion in~\Cref{subsec:no_corr_results}, results are 
  shown here for multiple values of the penalty parameter $\alpha$.
  These results (in comparison
  with~\Cref{fig:OED_No_Correlation_A_Optimal_Designs_Max8}) indicate that
  discarding the  observation correlations in an OED problem results in different
  optimal designs and thus can greatly degrade the quality of the solution of
  an inverse problem.
  Moreover, in the presence of spatial correlations between neighboring
  candidate sensor locations, the optimal design tends to be more spread in the
  domain.
  \begin{figure}[!ht]\vspace{-10pt}
  \centering
      \includegraphics[width=0.24\linewidth, trim=80 80 0 0, clip]{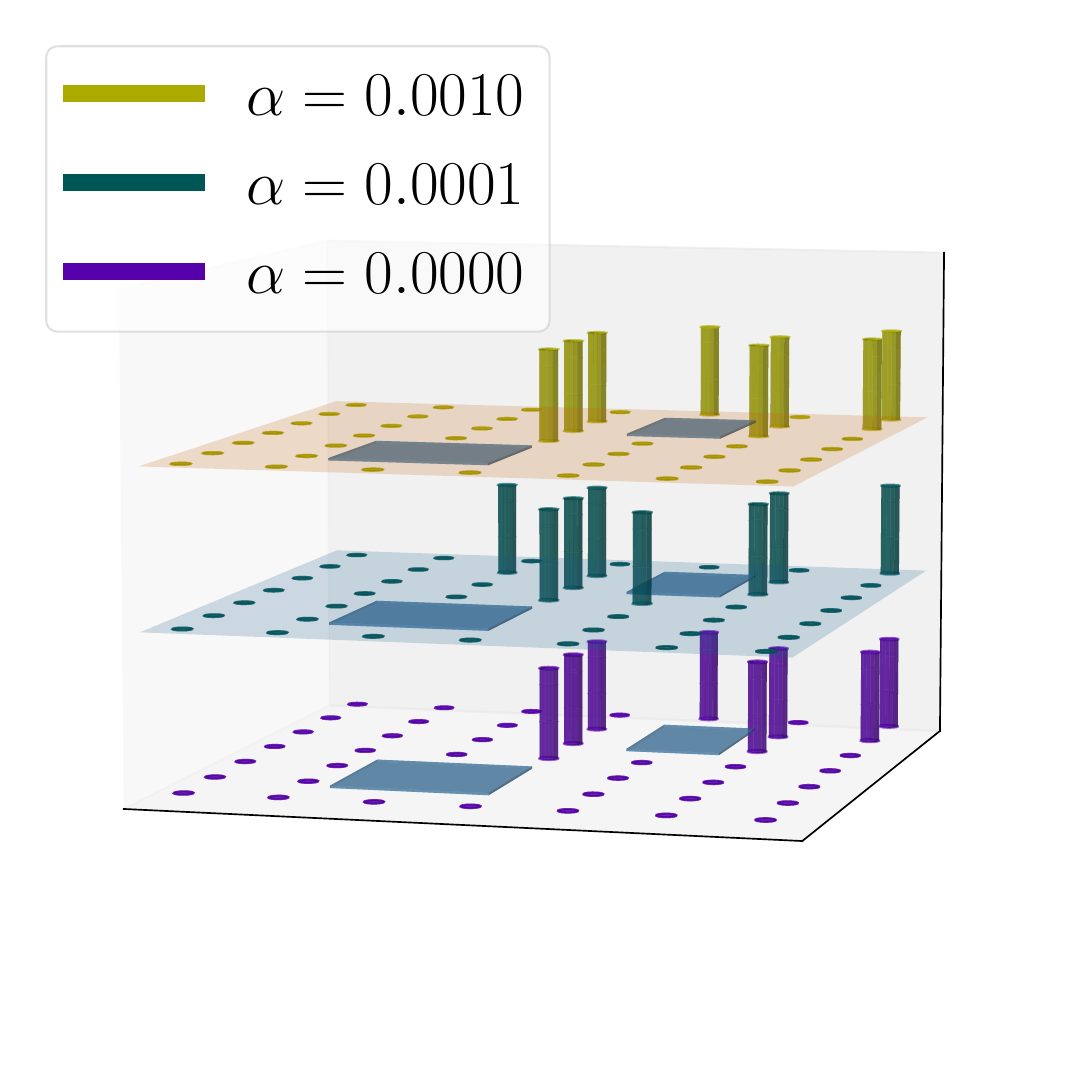} 
      \includegraphics[width=0.24\linewidth, trim=80 80 0 0, clip]{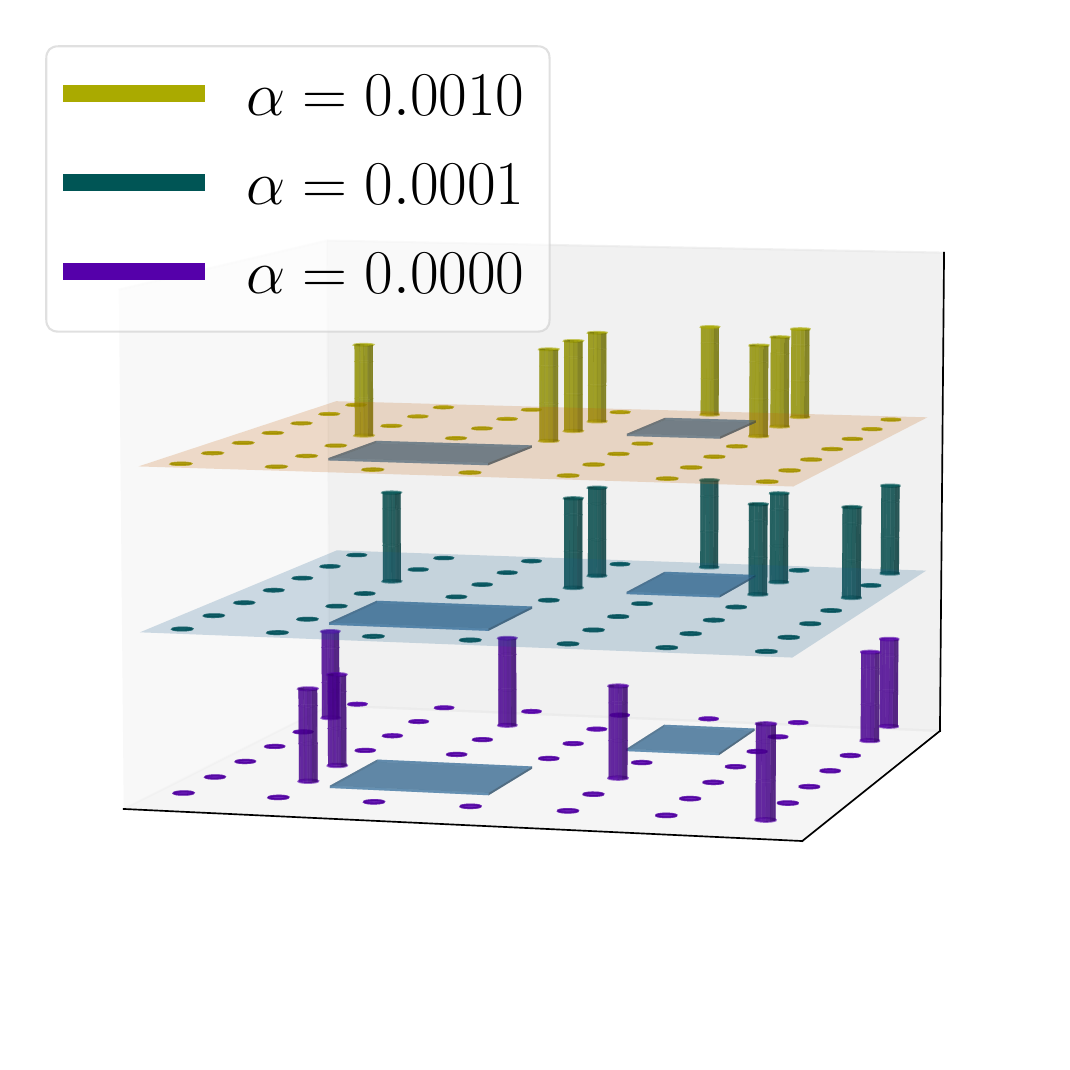}
      \includegraphics[width=0.24\linewidth, trim=80 80 0 0, clip]{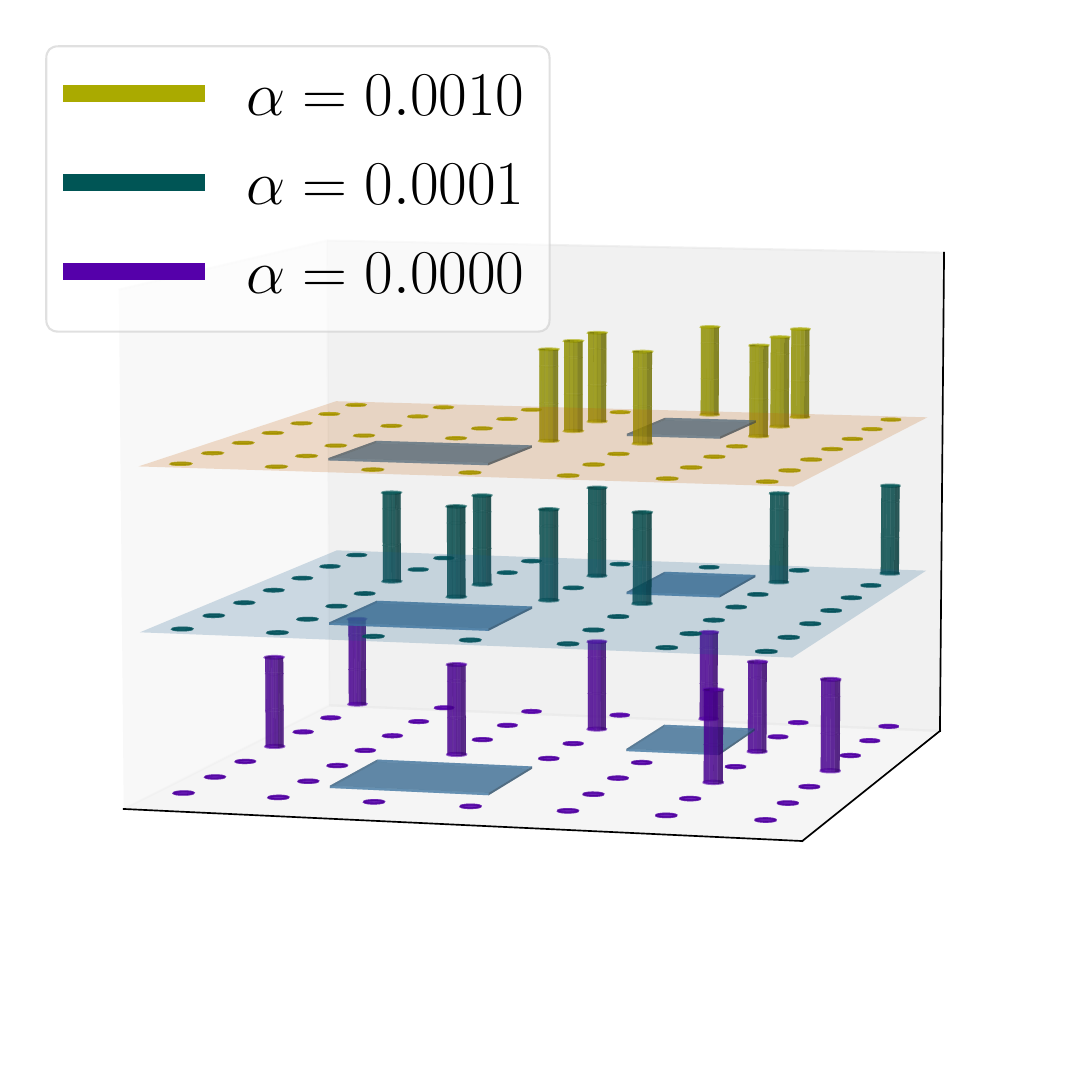}
      \includegraphics[width=0.24\linewidth, trim=80 80 0 0, clip]{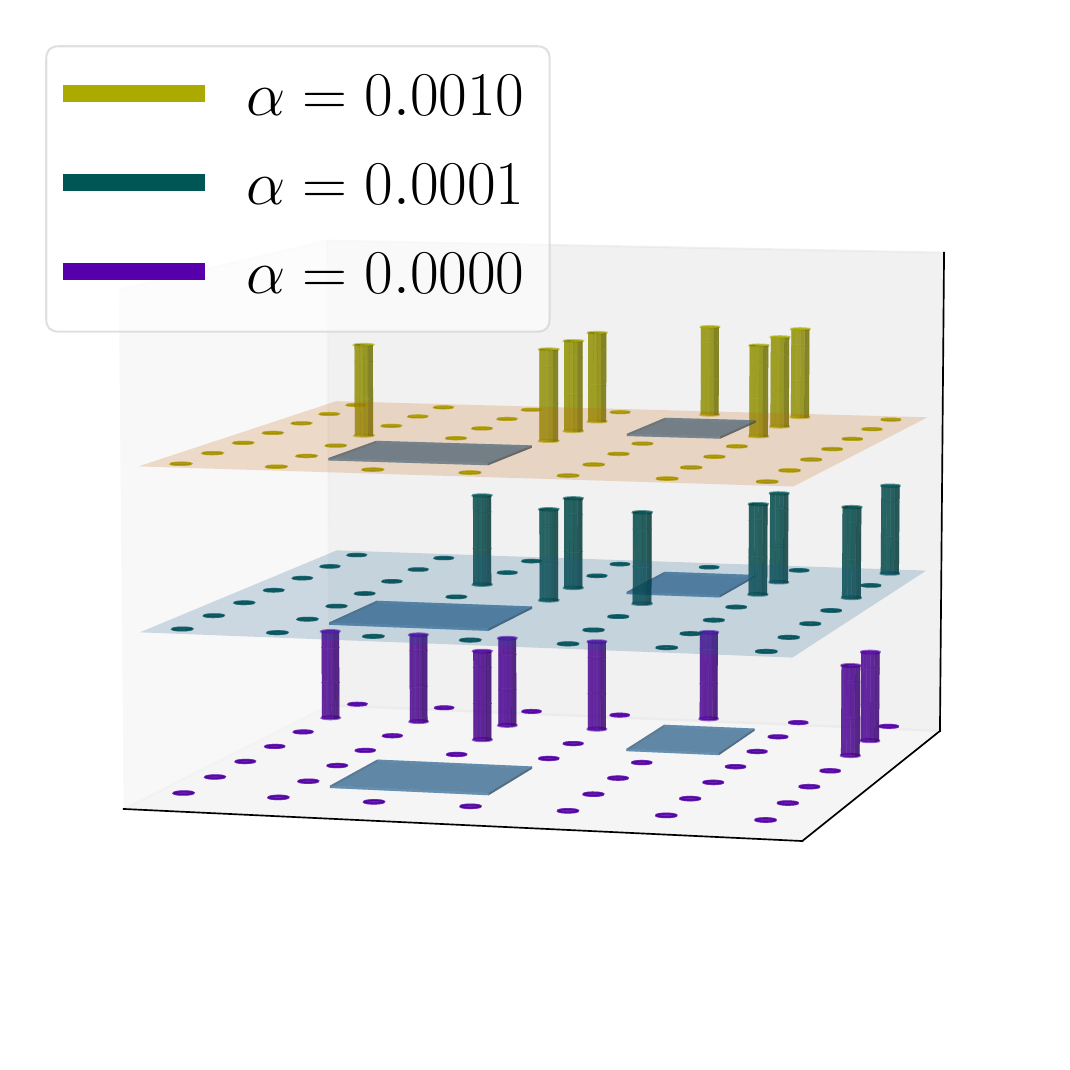}
    \caption{Similar to~\Cref{fig:OED_No_Correlation_A_Optimal_Designs_Max8}.
      Here we allow observation correlations with length scale $\ell=1$.
      }
    \label{fig:OED_Correlation_LengthScale_1_Weights_Max8}
  \end{figure}
  \begin{figure}[!ht]\vspace{-10pt}
  \centering
      \includegraphics[width=0.24\linewidth, trim=80 80 0 0, clip]{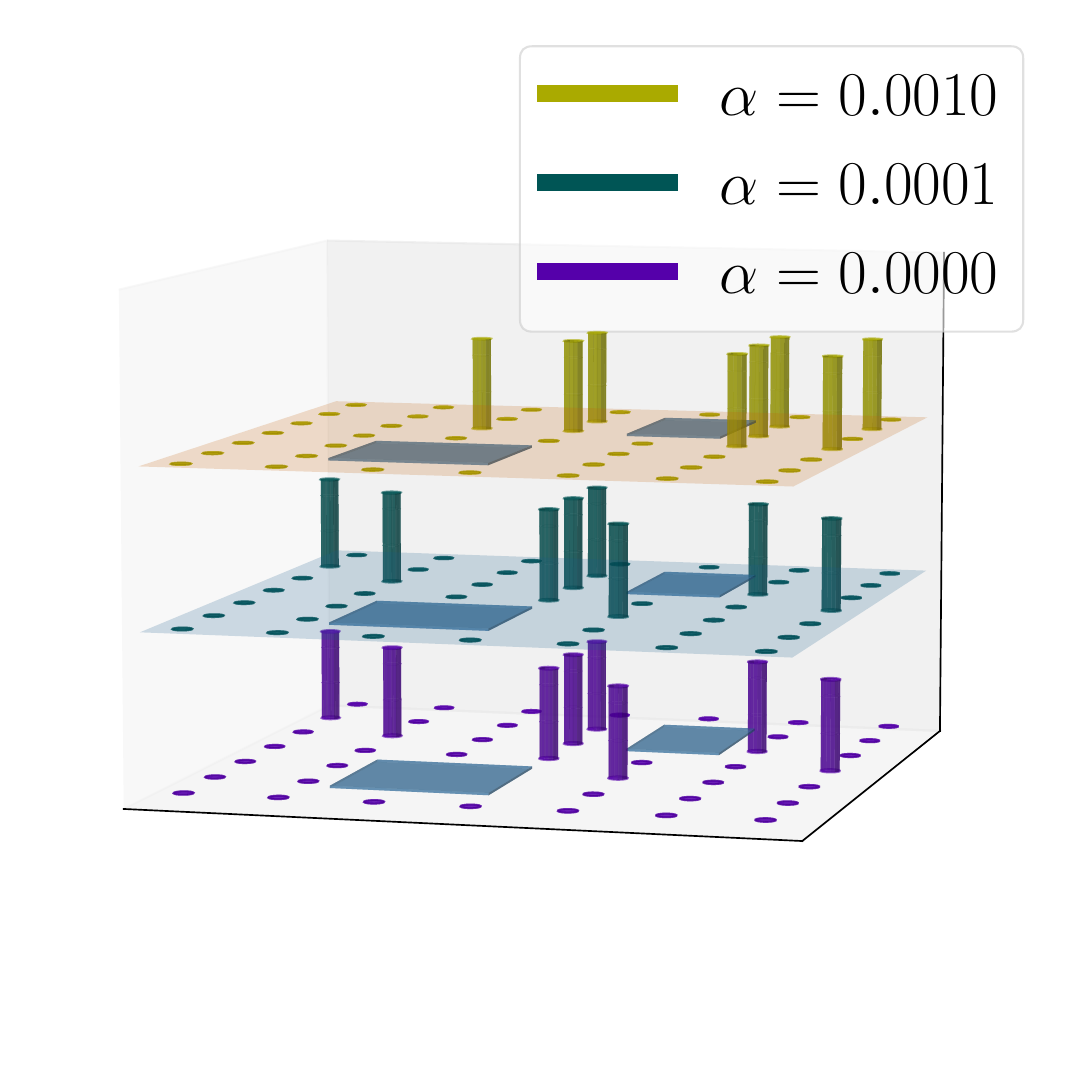} 
      \includegraphics[width=0.24\linewidth, trim=80 80 0 0, clip]{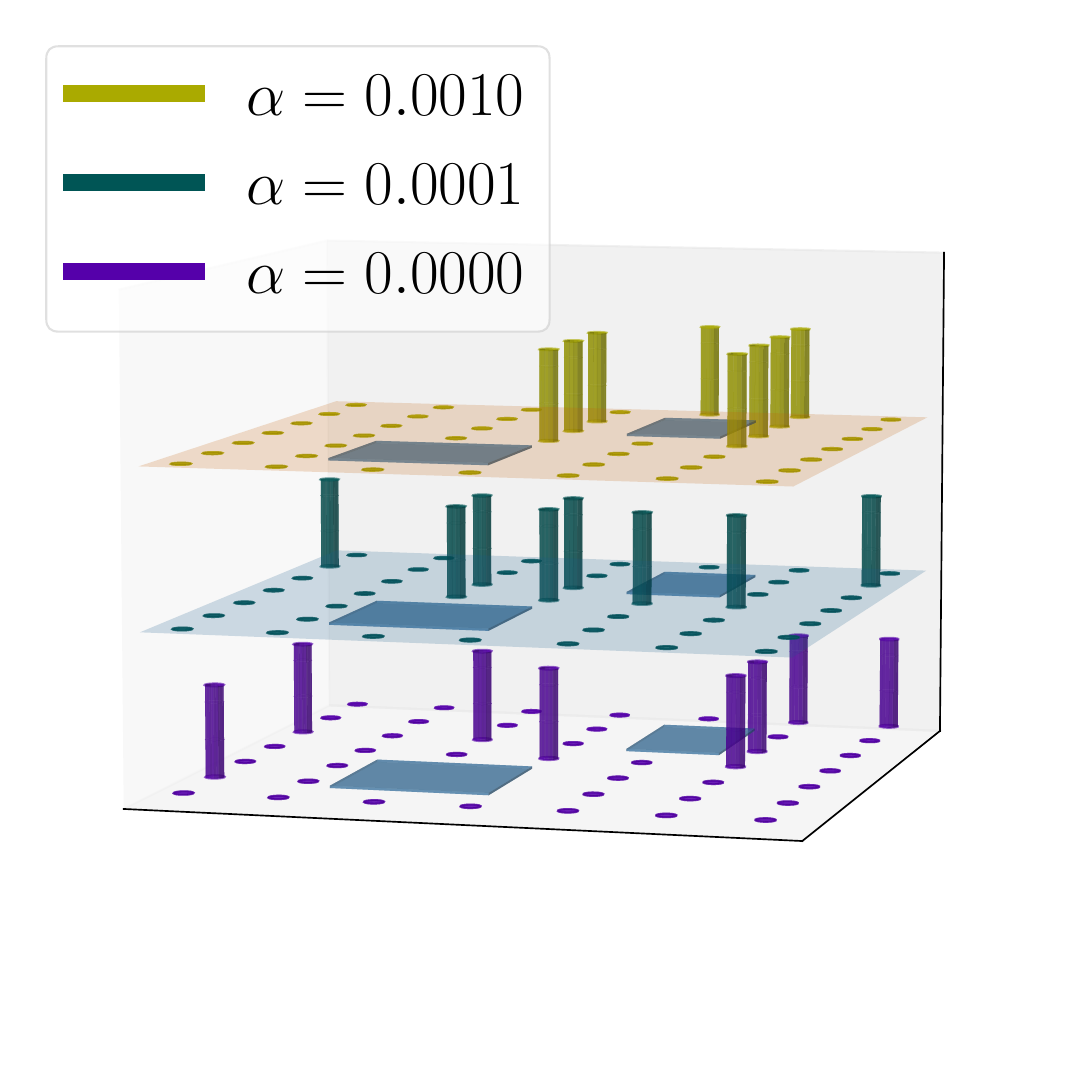}
      \includegraphics[width=0.24\linewidth, trim=80 80 0 0, clip]{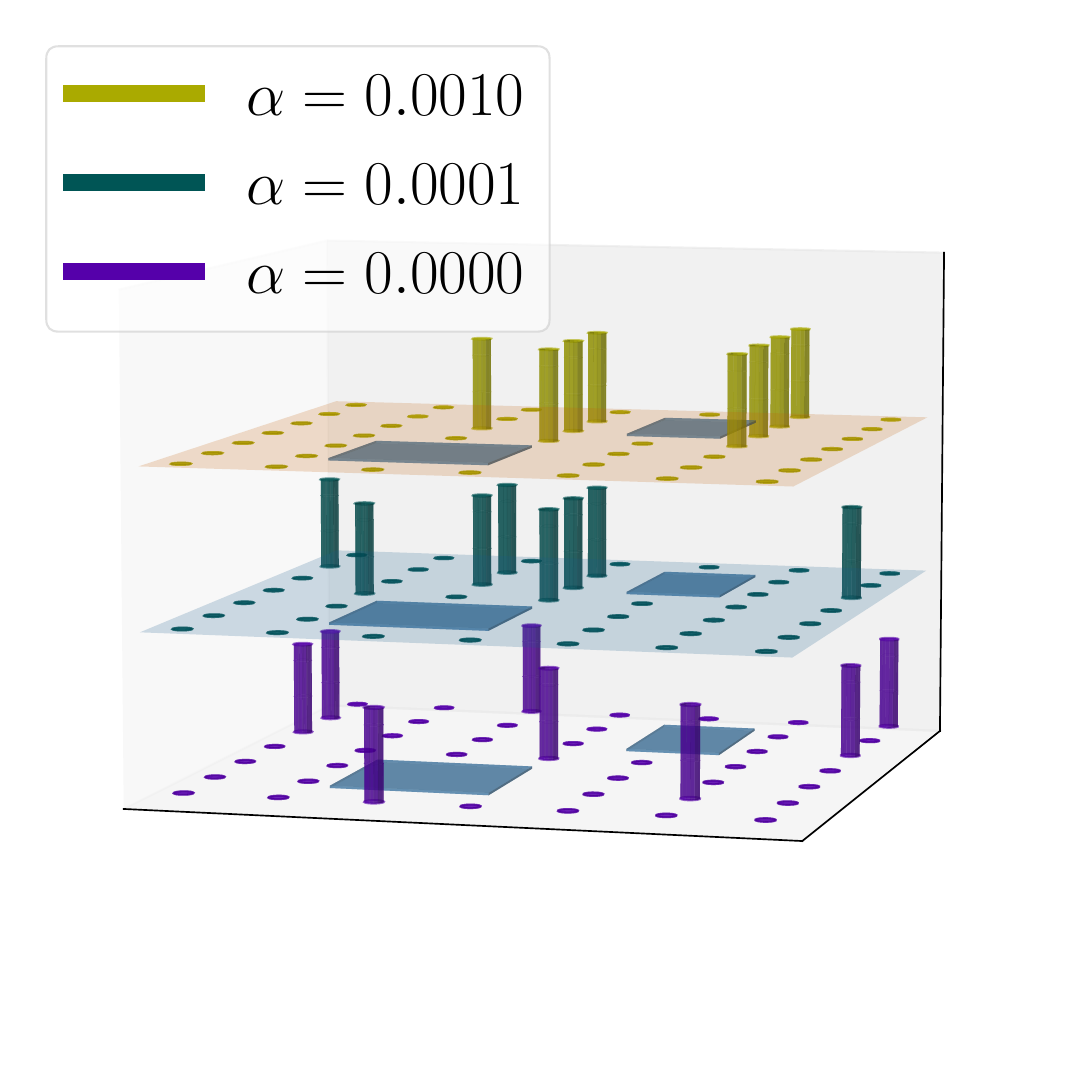}
      \includegraphics[width=0.24\linewidth, trim=80 80 0 0, clip]{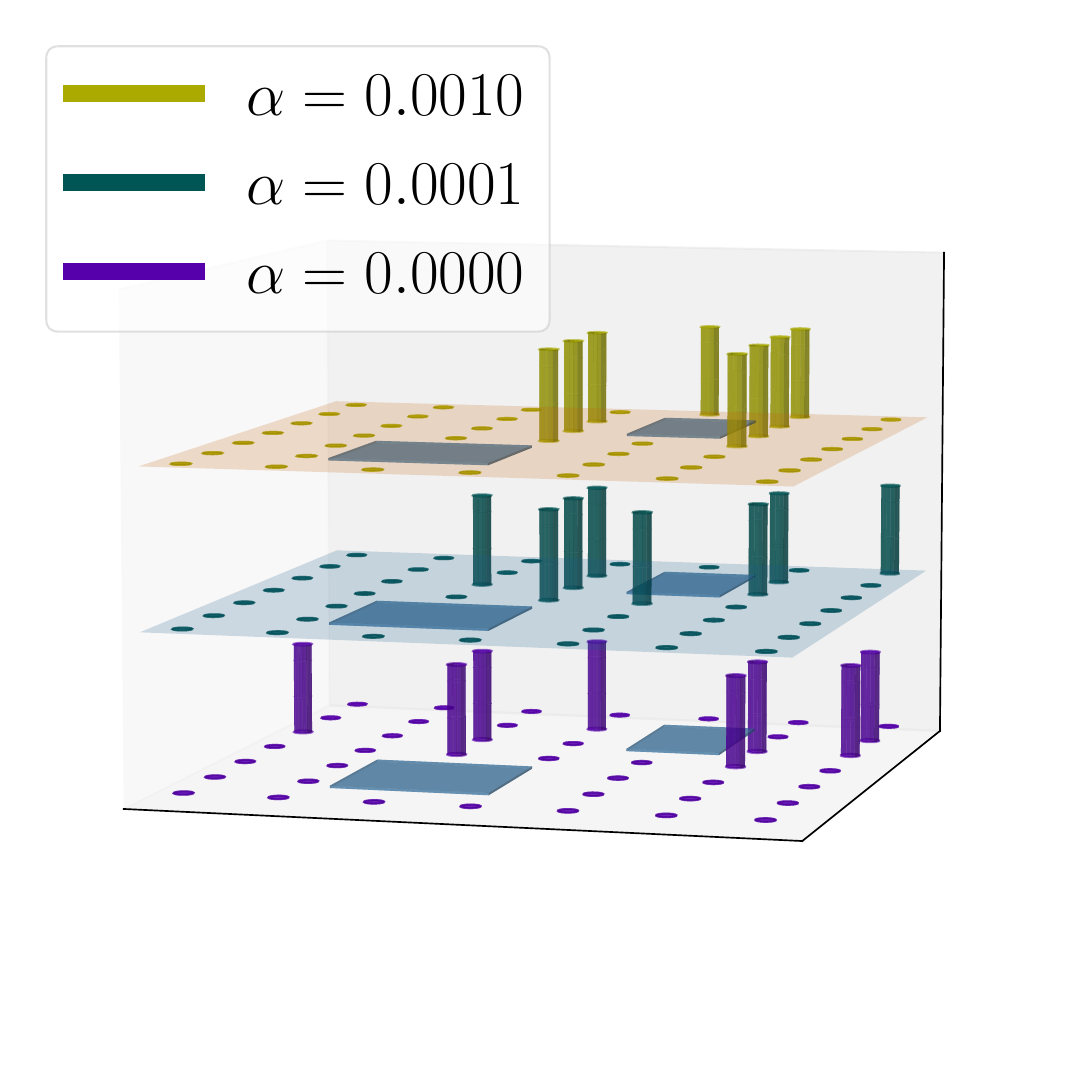}
    \caption{Similar to~\Cref{fig:OED_No_Correlation_A_Optimal_Designs_Max8}.
      Here we allow observation correlations with length scale $\ell=3$.
      }
    \label{fig:OED_Correlation_LengthScale_3_Weights_Max8}
  \end{figure}

  To understand the empirical performance of the experiments carried out with observation
  spatial correlations, we analyze the value of the optimality criterion
  (\Cref{tbl:Posterior_Trace_Correlation}) as
  well as the RMSE results (\Cref{tbl:RMSE_Correlation}). 
  \Cref{tbl:Posterior_Trace_Correlation} shows the value of the A-optimality criterion, 
  namely, the trace of the posterior covariance matrix,
  obtained by the optimal design, before and after thresholding, for multiple values of the 
  penalty parameter $\alpha$.
  The results in~\Cref{tbl:Posterior_Trace_Correlation} show that the standard
  OED formulation is not consistent with the Schur-OED formulation.
  It is hard to compare the two sets of results because they show
  numerical approximation obtained by using two different objective functions.
  Nevertheless, this comparison can in part be made based on the accuracy of the resulting
  solution, for example, by inspecting RMSE results.
  \Cref{tbl:RMSE_Correlation} shows the RMSE of the solution of the inverse
  problem corresponding to optimal designs with objective values displayed
  in~\Cref{tbl:Posterior_Trace_Correlation}.
  The overall performance explained by results in~\Cref{tbl:RMSE_Correlation} 
  shows that the Schur-OED formulation outperforms the standard OED formulation
  especially for stronger presence of observational error correlations, for example, for larger correlation length scale.
  We note that with a wider correlation length scale, that is, by allowing more
  sensors to be highly correlated, fewer sensors are required to
  achieve high accuracy. This is explained by the significant drop in RMSE
  values from $\ell\!=\!1$ to $\ell\!=\!3$.  
  This is also obvious by comparing the RMSE results
  in~\Cref{tbl:RMSE_Correlation} with ~\Cref{tbl:RMSE_No_Correlation}.

  %
  \begin{table}[!ht]
    \caption{Similar to~\Cref{tbl:Posterior_Trace_No_Correlation}.
      Here we allow observation correlations, with correlation length scale
      $\ell$ set to $\ell=1$ and $\ell=3$, respectively. 
    }\label{tbl:Posterior_Trace_Correlation}
    \centering\pgfkeys{/pgf/number format/fixed,
          /pgf/number format/precision=2}
  \resizebox{0.99\linewidth}{!}{%
    \begin{tabular}{|c||c||E|E|E||E|E|E||E|E|E||E|E|E|}
    \hline
        \multirow{3}{*}{\parbox{0.9in}{\centering Correlation\\Length-scale}} 
        & \multirow{3}{*}{\parbox{0.9in}{\centering Regularization Penalty}} 
          
        & \multicolumn{3}{c||}{Standard
          OED~(\ref{eqn:oed_relaxed_optimization_problem},~\ref{eqn:standard_prepost_fixed})}
        & \multicolumn{3}{c||}{Product Kernel~\eqref{eqn:cov-prod_kernel}}
        & \multicolumn{3}{c||}{EXP Kernel~\eqref{eqn:exp-prod_kernel}}
        & \multicolumn{3}{c|}{Sigmoid Kernel~\eqref{eqn:sigmoid-prod_kernel}}
        \\ \cline{3-14} 
        &
        & \multicolumn{1}{c|}{A-optimal} 
        & \multicolumn{2}{c||}{Thresholded Design} 
        & \multicolumn{1}{c|}{A-optimal} 
        & \multicolumn{2}{c||}{Thresholded Design} 
        & \multicolumn{1}{c|}{A-optimal} 
        & \multicolumn{2}{c||}{Thresholded Design} 
        & \multicolumn{1}{c|}{A-optimal} 
        & \multicolumn{2}{c|}{Thresholded Design} 
        \\ \cline{4-5} \cline{7-8} \cline{10-11} \cline{13-14} 
          &  
          & \multicolumn{1}{c|}{Design}
          & \multicolumn{1}{c|}{Max 8}
          & \multicolumn{1}{c||}{Median}   
          & \multicolumn{1}{c|}{Design}
          & \multicolumn{1}{c|}{Max 8}
          & \multicolumn{1}{c||}{Median}   
          & \multicolumn{1}{c|}{Design}
          & \multicolumn{1}{c|}{Max 8}
          & \multicolumn{1}{c||}{Median}   
          & \multicolumn{1}{c|}{Design}
          & \multicolumn{1}{c|}{Max 8}
          & \multicolumn{1}{c|}{Median}     \\ \hline \hline
          \multicolumn{1}{|c||}{\multirow{5}{*}{$\ell=1$}}  
              & $\alpha=0$ & 
                0.00679 & 0.00779 & 0.00711 & 0.00701 & 0.02777 & 0.00719 & 0.00701 & 0.02962 & 0.00718 & 0.00704 & 0.08983 & 0.00828 
                \\ \cline{2-14}
          \multicolumn{1}{|c||}{}
              & $\alpha=1e-4$ &  
                0.00683 & 0.00775 & 0.00703 & 0.00812 & 0.00938 & 0.00732 & 0.00808 & 0.01825 & 0.00719 & 0.00815 & 0.01042 & 0.00736
                \\ \cline{2-14}
          \multicolumn{1}{|c||}{}
              & $\alpha=3e-4$ &  
                0.00724 & 0.00857 & 0.00728 & 0.00908 & 0.00766 & 0.00722 & 0.00875 & 0.00831 & 0.00725 & 0.00902 & 0.00709 & 0.00713
                \\ \cline{2-14}
          \multicolumn{1}{|c||}{}
              & $\alpha=5e-4$ &  
                0.00762 & 0.00779 & 0.00719 & 0.00961 & 0.00766 & 0.00722 & 0.00969 & 0.00781 & 0.00727 & 0.01017 & 0.00777 & 0.00705
                \\ \cline{2-14}
          \multicolumn{1}{|c||}{}
              & $\alpha=1e-3$ &  
                0.00790 & 0.00779 & 0.00739 & 0.01123 & 0.00725 & 0.00676 & 0.01122 & 0.00728 & 0.00694 & 0.01123 & 0.00725 & 0.00668
                \\ \hline  \hline
          \multicolumn{1}{|c||}{\multirow{5}{*}{$\ell=3$}}  
              & $\alpha=0$ &  
                0.00329 & 0.00581 & 0.00321 & 0.00336 & 0.03241 & 0.00233 & 0.00353 & 0.04341 & 0.00229 & 0.00329 & 0.00647 & 0.00289
                \\ \cline{2-14}
          \multicolumn{1}{|c||}{}
              & $\alpha=1e-4$ &  
                0.00329 & 0.00581 & 0.00321 & 0.00499 & 0.02117 & 0.00247 & 0.00431 & 0.00835 & 0.00230 & 0.00765 & 0.00386 & 0.00241
                \\ \cline{2-14}
          \multicolumn{1}{|c||}{}
              & $\alpha=3e-4$ & 
                0.00330 & 0.00581 & 0.00321 & 0.00653 & 0.00399 & 0.00287 & 0.00631 & 0.00399 & 0.00263 & 0.00745 & 0.00326 & 0.00255
                \\ \cline{2-14}
          \multicolumn{1}{|c||}{}
              & $\alpha=5e-4$ &  
                0.00304 & 0.00345 & 0.00255 & 0.00732 & 0.00364 & 0.00269 & 0.00688 & 0.00371 & 0.00260 & 0.00920 & 0.00387 & 0.00234
                \\ \cline{2-14}
          \multicolumn{1}{|c||}{}
              & $\alpha=1e-3$ &  
                0.00364 & 0.00339 & 0.00290 & 0.01069 & 0.00373 & 0.00226 & 0.00652 & 0.00331 & 0.00241 & 0.01065 & 0.00373 & 0.00216 
                \\ \hline 
    \end{tabular}%
    } 
  \end{table}
  %

    %
    \begin{table}[!ht]
      \caption{Similar to~\Cref{tbl:RMSE_No_Correlation}.
        Here we allow observation correlations, with correlation length scale
        $\ell$ set to $\ell=1$ and $\ell=3$, respectively. 
        }\label{tbl:RMSE_Correlation}
      \centering\pgfkeys{/pgf/number format/fixed,
            /pgf/number format/fixed zerofill,
            /pgf/number format/precision=2}
    \resizebox{0.85\linewidth}{!}{%
      \begin{tabular}{|c||c||c|c||c|c||c|c||c|c||c|c|}
      \hline
        \multirow{3}{*}{\parbox{0.9in}{\centering Correlation\\Length-scale}} 
        & \multirow{3}{*}{\parbox{0.9in}{\centering Regularization Penalty}} 
        & \multicolumn{10}{c|}{RMSE}
          \\ \cline{3-12}
          &
          & \multicolumn{2}{c||}{All Sensors}
          & \multicolumn{2}{c||}{Standard
            OED~(\ref{eqn:oed_relaxed_optimization_problem},~\ref{eqn:standard_prepost_fixed})}
          & \multicolumn{2}{c||}{Product Kernel~\eqref{eqn:cov-prod_kernel}}
          & \multicolumn{2}{c||}{EXP Kernel~\eqref{eqn:exp-prod_kernel}}
          & \multicolumn{2}{c|}{Sigmoid Kernel~\eqref{eqn:sigmoid-prod_kernel}}
          \\ \cline{3-12}
            & 
            & \multicolumn{1}{c|}{Prior}
            & \multicolumn{1}{c||}{Posterior}   
            & \multicolumn{1}{c|}{Max 8}
            & \multicolumn{1}{c||}{Median}   
            & \multicolumn{1}{c|}{Max 8}
            & \multicolumn{1}{c||}{Median}   
            & \multicolumn{1}{c|}{Max 8}
            & \multicolumn{1}{c||}{Median}   
            & \multicolumn{1}{c|}{Max 8}
            & \multicolumn{1}{c|}{Median}     \\ \hline \hline
            \multirow{5}{*}{$\ell=1$}
            & $\alpha=0$
            &\multicolumn{1}{c|}{\multirow{5}{*}{0.4704}}&\multicolumn{1}{c||}{\multirow{5}{*}{0.0225}}& 
              0.04457 & 0.01799 & 0.05667 & 0.02262 & 0.02969 & 0.02249 & 0.08218 & 0.02327
              \\ \cline{2-2} \cline{5-12}
            & $\alpha=1e-4$ & &  & 
              0.05227 & 0.02434 & 0.02036 & 0.01358 & 0.01594 & 0.01315 & 0.05219 & 0.01491
              \\ \cline{2-2} \cline{5-12}
            & $\alpha=3e-4$ & &  & 
              0.04535 & 0.02130 & 0.02321 & 0.01960 & 0.02374 & 0.01401 & 0.04955 & 0.02212
              \\ \cline{2-2} \cline{5-12} 
            & $\alpha=5e-4$ & &  & 
              0.04457 & 0.03283 & 0.02321 & 0.01960 & 0.02417 & 0.01919 & 0.04694 & 0.02998
              \\ \cline{2-2} \cline{5-12} 
            & $\alpha=1e-3$ & &  & 
              0.04457 & 0.01959 & 0.03310 & 0.03169 & 0.05018 & 0.02870 & 0.03310 & 0.03056
              \\ \hline \hline 
            \multirow{5}{*}{$\ell=3$}
            & $\alpha=0$     &\multicolumn{1}{c|}{\multirow{5}{*}{0.4704}}&\multicolumn{1}{c||}{\multirow{5}{*}{0.0239}}& 
              0.05055 & 0.02920 & 0.08058 & 0.01356 & 0.10616 & 0.01053 & 0.02389 & 0.02499
              \\ \cline{2-2} \cline{5-12}
            & $\alpha=1e-4$  & & &
              0.05055 & 0.02920 & 0.14247 & 0.01687 & 0.01879 & 0.00903 & 0.01820 & 0.01177 
              \\ \cline{2-2} \cline{5-12}
            & $\alpha=3e-4$  & & &
              0.05055 & 0.02920 & 0.01711 & 0.01880 & 0.01711 & 0.02025 & 0.01820 & 0.01177
              \\ \cline{2-2} \cline{5-12}
            & $\alpha=5e-4$  & & &
              0.04985 & 0.01161 & 0.02042 & 0.01892 & 0.01832 & 0.01337 & 0.01179 & 0.01152
              \\ \cline{2-2} \cline{5-12}
            & $\alpha=1e-3$  & & &
              0.04763 & 0.02041 & 0.04279 & 0.01124 & 0.02722 & 0.00988 & 0.01279 & 0.00755
              \\ \hline
      \end{tabular}%
      } 
    \end{table}
    %

  We conclude this section by showing the solution of the inverse problem
  obtained by reconfiguring the observational setup based on the A-optimal
  design resulting from~\eqref{eqn:oed_relaxed_optimization_problem} in the
  presence of observation correlations;
  see~\Cref{fig:Restricted_IP_Correlation_Max8_Penalty_0.0001}.
  These results support our assertion that with increasing correlation
  length scale, fewer sensors are needed achieve high levels of
  accuracy. The results also show that the Schur-OED formulation outperforms
  the standard OED approach especially for larger correlation length scales. 
  \begin{figure}[!ht]\vspace{-5pt}
    \centering
      \includegraphics[width=0.49\linewidth]{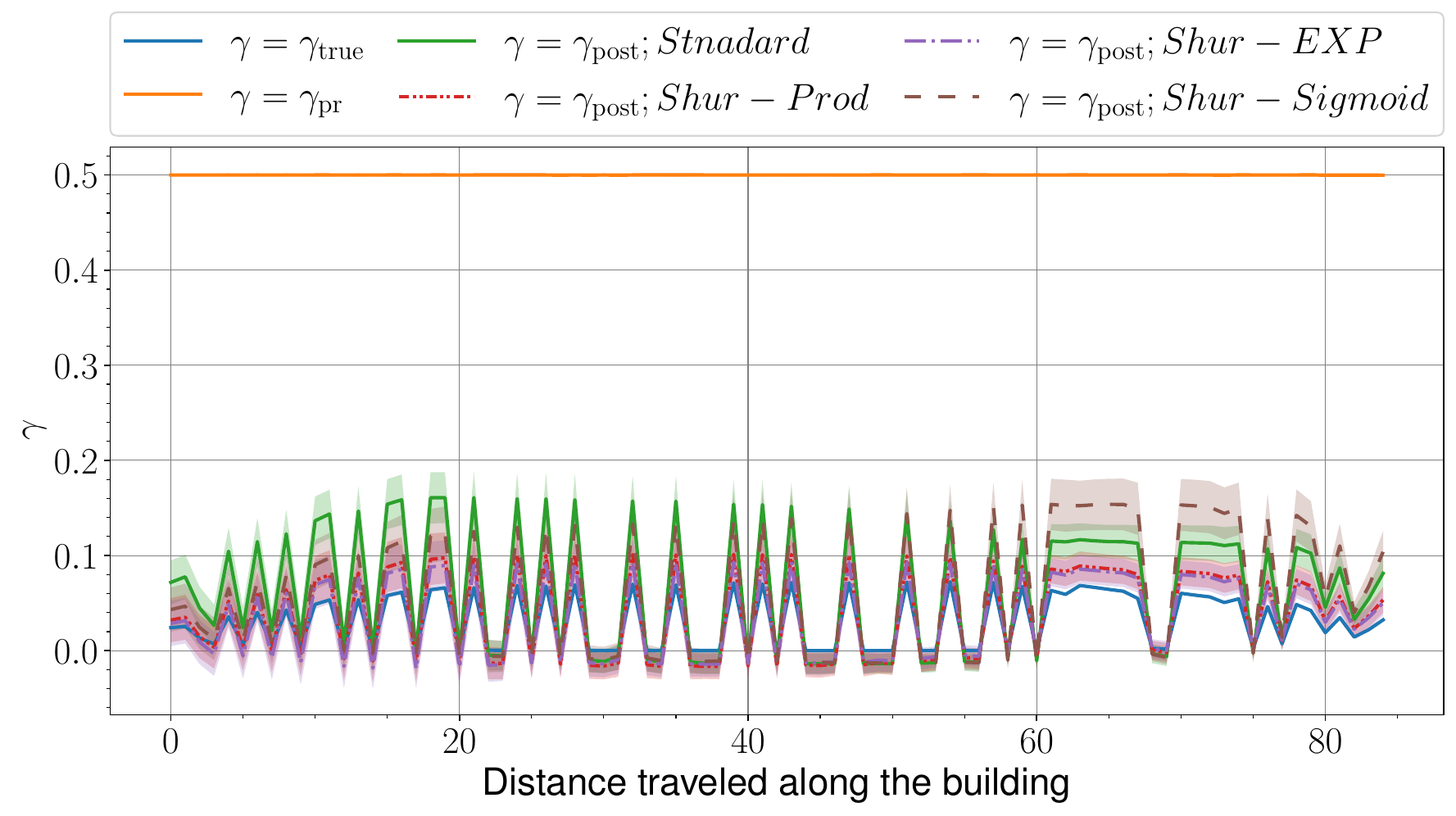}
      \hfill
      \includegraphics[width=0.49\linewidth]{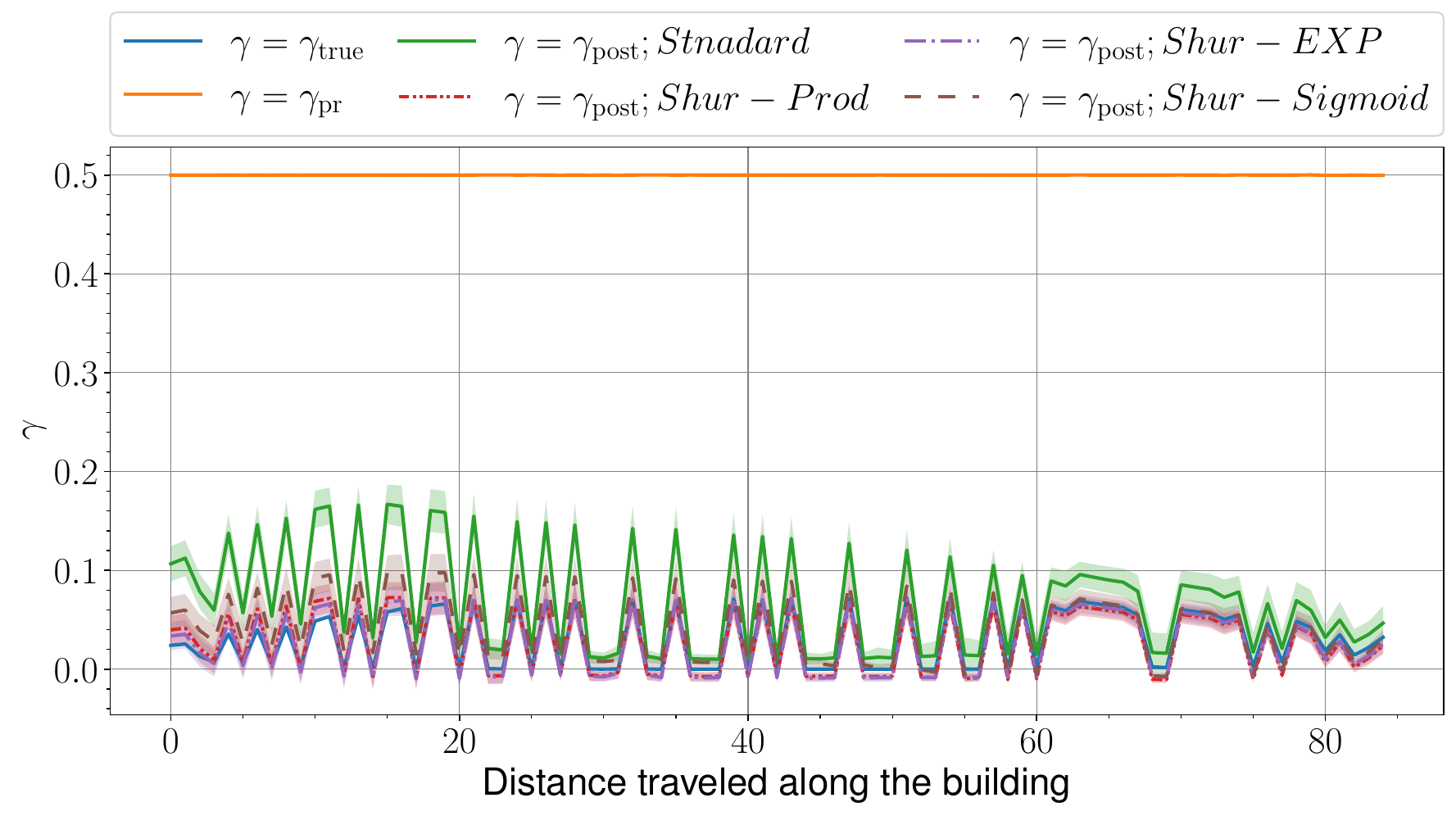}
      \caption{Similar to~\Cref{fig:Restricted_IP_No_Correlation_Max8_Penalty_0.0001}.
          Here, observations are assumed to be correlated with length scale
          $\ell$, with $\ell=1$ (left), and with $\ell=3$ (right), respectively.
          We show results obtained with $\alpha=1e-4$, and $\budget=8$ sensors are activated. 
        }
      \label{fig:Restricted_IP_Correlation_Max8_Penalty_0.0001}
    \end{figure}

  The numerical results presented in this section reveal that solving an OED problem for 
  sensor placement before carrying out Bayesian inversion is an essential step to 
  guarantee an optimal deployment of observational grid under a limited budget.
  While the traditional formulation of the OED problem is useful, its performance can be 
  enhanced by employing the generalized formulation suggested in this work, especially 
  in the presence of observation correlations. Specifically, while it can be
  tempting for simplicity to ignore observation correlations in the process
  of solving an OED problem, one will likely end up with a suboptimal design
  that can lead to erroneous solution of the inverse problem.

\section{Conclusion}
\label{sec:Conclusions}
  In this work we presented a generalized approach for optimal experimental
  design  for linear Bayesian inverse problems where the measurement 
  errors are generally correlated. 
  This study supplements the fast-evolving literature on OED for the Bayesian inverse problem; it provides an extended mathematical
  formulation of the most popular optimality criteria, as well as the associated
  gradients, essential for the numerical solution of the optimization
  problem.
  The proposed formulation follows a Hadamard product approach to formulate the 
  weighted likelihood, which is then used in the optimality criterion. 
  This approach provides a clear understanding of the effect of the design
  on the measurements and the covariances of observational errors and is 
  valid in both finite- and infinite-dimensional settings of Bayesian linear 
  inverse problems.
  We show that the traditional formulation of the OED 
  problem is a special case of the proposed approach in the case of uncorrelated
  observations, where the observation precision matrix is weighted by the
  relaxed design.
  The Hadamard product formulation is shown to be accurate and more flexible, especially for
  handling spatiotemporal correlations.
  We provide multiple candidates of the weighting function that evaluates the
  relative importance of observation covariances.
  Our numerical results show that the proposed formulation 
  achieves better results than by using the traditional formulation, that is,
  by pre- and postmultiplication of the observation error precision matrix with
  the relaxed design.
  All weighting functions investigated here achieved similar performance in
  the presence of observation correlations with larger length scale, that is, by
  allowing more sensors to be correlated.
  By using a logistic sigmoid function for covariance weighting, however, the
  OED optimization problem transforms into an unconstrained optimization
  problem, which is generally easier to solve than the traditional OED problem
  with box constraints.

  The main limitation of the proposed approach
  is the requirement of  differntiability of the regularization term.
  In this work we follow the common practice of approximating the
  sparsification-enforcing penalty $\ell_0$ with a penalty function based on
  $\ell_1$ norm, which is differentiable given that the weights fall in the
  interval $[0, 1]$.
  A recent approach that does not require differntiability of the objective function with
  respect to the design, and thus enables sparsification penalties such as
  $\ell_0$, is proposed in~\cite{attia2022stochastic}.
  However, this approach does not apply any relaxation to the design variables 
  and is thus out of the scope of this work.

  While we provided the mathematical formulation of the approach for A- and
  D-optimal designs, we focused our numerical experiments on A-optimality in the
  presence of spatial correlations.
  Extensions of the mathematical formulation and empirical studies of 
  D-optimal designs in the presence of correlations in space and time
  domains are still required.
  Algorithmic approaches such as the standard greedy swapping
  (exchange)
  algorithm~\cite{fedorov2013theory,pazman1974convergence,wynn1970sequential}
  can be used to seek a local optimum of the OED
  objective; however, as with the coordinate descent
  optimization approach, 
  it can be computationally expensive for increasing 
  cardinality of the design space.
  This work focused on extending the popular relaxation approach 
  for solving binary OED formulation where gradient-based optimization routines
  are utilized to find a local minimum of the OED objective.
  Nevertheless, in future work we plan to provide empirical comparisons to assess the
  quality and computational cost of various OED formulations and solution
  approaches.

\appendix

\section{Proofs of Theorems and Lemmas Discussed in~\Cref{sec:OED_Correlated_OBS}}
\label{app:Proofs}
  \begin{proof}[Proof of~\Cref{lemma:pseudo_inverse_prepost}]
    Let $\mat{A}:=\designmat \Cobsnoise\designmat\tran$ 
    and 
    $\mat{A}^{\dagger}:= \proj\tran
           \left( \proj \designmat
             \Cobsnoise \designmat\tran \proj\tran
           \right)\inv \proj
    $, where we dropped the dependency of $\proj$ and $\designmat$ for
    clarity.
    By the definition of $\proj$ and $\designmat$, it follows that 
    $\proj\tran\proj\designmat=\designmat=\designmat\tran=\designmat\tran\proj\tran\proj$,
    and hence the following hold:
    \begin{enumerate}
      \item[i] 
        $\mat{A} \mat{A}^{\dagger} 
        = 
        \proj\tran\proj\designmat \Cobsnoise\designmat\tran
        \proj\tran
           \left( \proj \designmat
             \Cobsnoise \designmat\tran\proj \tran
           \right)\inv \proj 
           = \proj\tran\proj$.

         \item[ii] $\mat{A}^{\dagger} \mat{A} = 
        \proj\tran
           \left( \proj \designmat
             \Cobsnoise \designmat\tran\proj\tran
           \right)\inv \proj 
            \designmat \Cobsnoise\designmat\tran
           \proj\tran \proj
           = \proj\tran\proj$.
    \end{enumerate}

    It follows from (i), (ii) that the four conditions of a pseudo inverse matrix are then satisfied:
    \begin{enumerate}
      \item $\mat{A}\mat{A}^{\dagger} \mat{A} = \proj\tran\proj\mat{A} 
        = \proj\tran \proj \designmat \Cobsnoise\designmat\tran = \designmat
          \Cobsnoise\designmat\tran = \mat{A}\,.$
      
        \item $\mat{A}^{\dagger} \mat{A} \mat{A}^{\dagger} 
          = \proj\tran\proj \mat{A}^{\dagger} 
          = \proj\tran\proj \proj\tran 
            \left( \proj \designmat \Cobsnoise \designmat\tran \proj\tran \right)\inv \proj
          = \proj\tran
            \left( \proj \designmat \Cobsnoise \designmat\tran \proj\tran \right)\inv \proj
          = \mat{A}^{\dagger}\,,$ where we used the fact that
          $\proj\proj\tran=\mat{I}$, the identity matrix.

        \item $\left(\mat{A} \mat{A}^{\dagger}\right)\tran
          = \left(\proj\tran \proj\right)\tran
          =\proj\tran \proj =
          \mat{A}\mat{A}^{\dagger}\,.$

        \item $\left(\mat{A}^{\dagger} \mat{A}\right)\tran
          = \left(\proj\tran \proj\right)\tran
          = \proj\tran \proj =
          \mat{A}^{\dagger} \mat{A}$ \,.
    \end{enumerate}
  \end{proof}
  \begin{proof}[Proof of~\Cref{lemma:pseudo_inverse_pointwise}]
    The proof follows easily by noting that
    \begin{equation*}
      \proj\tran(\design)\proj(\design)
        \left(\designmat(\design)\odot \Cobsnoise \right) 
        =
        \designmat(\design)\odot \Cobsnoise 
        = 
        \left(\designmat(\design)\odot \Cobsnoise \right) 
        \proj\tran(\design) \proj(\design)\,,
    \end{equation*}
    and by following the same steps in the proof
    of~\Cref{lemma:pseudo_inverse_prepost}.
  \end{proof}
  \begin{proof}[Proof of~\Cref{lemma:regularization_equivalence}]
    By definition of $\designmat$, if $\design\rightarrow\vec{1}$, then
    $\designmat\rightarrow\mat{1}$ 
    and thus $\wdesignmat\odot\Cobsnoise \rightarrow\Cobsnoise$, and
    $\left(\wdesignmat\odot\Cobsnoise\right)\inv
    \rightarrow\mat{\Gamma}\inv_{\rm noise}$, which proves the first part of
    the statement. 
    To prove the statement for $\design\rightarrow \vec{0}$, 
      by letting $\vec{s}:=\diag{\Cobsnoise}$,  and $\vec{d}(\design):=(d_1(\design_1),\ldots,d_{\Nsens}(\design_{\Nsens}))$
    with
    \begin{equation*}
        d_i(\design_i) := \begin{cases}
        0 \,; & \quad \design_i = 0  \\
        \frac{1 - \design_i^4 }{\design_i^2} \,; & \quad \design_i \neq 0  \,,
      \end{cases}
    \end{equation*}
    one can show that 
    \begin{equation}\label{eqn:noise_regularization}
        \designmat(\design) \odot \Cobsnoise = \Diag{\vec{s} \odot \vec{d}(\design)} +
      \Diag{\design} \Cobsnoise \Diag{\design}\,.
    \end{equation}

      Since the second term of the right hand side of~\eqref{eqn:noise_regularization} is positive semi-definite, by employing L\"{o}wner ordering~\cite{lowner1934monotone,Pukelsheim2006optimal}, it follows that 
        \begin{equation}
            \designmat(\design) \odot \Cobsnoise \succeq \Diag{\vec{s} \odot \vec{d}(\design)} \,,
        \end{equation}
        which implies 
        \begin{equation}
            \left( \designmat(\design) \odot \Cobsnoise \right)\inv 
                \preceq \left(\Diag{\vec{s} \odot \vec{d}(\design)}\right)\inv \,.
        \end{equation}
        Since the trace preserves the L\"{o}wner ordering, it follows that
        \begin{equation}
            0
            \leq 
            \Trace{
                \left(\designmat(\design) \odot \Cobsnoise \right)\inv 
            }
            \leq 
            \Trace{
                \left(\Diag{\vec{s} \odot \vec{d}(\design)}\right)\inv 
            }
            = \sum_{i=1}^{\Nsens}{\frac{\design_i^2}{1-\design_i^4}
                        \cdot \frac{1}{\left(\Cobsnoise\right)_{ii}} } \,.
        \end{equation}

        By letting $\design \rightarrow \vec{0}$ it follows that 
        $
            \Trace{
                \left(\designmat(\design) \odot \Cobsnoise \right)\inv 
            } \rightarrow 0
        $ 
        and hence $\left( \designmat(\design) \odot \Cobsnoise \right)\inv \rightarrow \vec{0}$ which immediately yields
        the desired conclusion.
  \end{proof}
  \begin{proof}[Proof of~\Cref{theorem:continuity}]
    We consider three possible cases.
    First, if $\design\rightarrow \vec{1}$, $w_{i}\rightarrow 1$ for all
    $i=1,\ldots,\Nsens$, then $\designmat(\design)\rightarrow\mat{1}$ and 
    $\wdesignmat(\design)=\left(\designmat(\design)\odot\Cobsnoise\right)\inv
    \rightarrow \Cobsnoise\inv$, resembling the case of activating all sensors.
      This also follows immediately from~\Cref{lemma:regularization_equivalence}.
    Second, if $\design\rightarrow\vec{0}$, then $w_{i, j}=\design_i\design_j
    \rightarrow 0$, and $\frac{1}{w_{i, i}}=\frac{1}{\design_i^2}
    \rightarrow \infty$.
    From~\Cref{lemma:regularization_equivalence} it follows in this case that 
      $\pseudoinv{\designmat(\design)\odot\Cobsnoise} \rightarrow \mat{0}$, 
    which properly represents the case of disabling all sensors. 

    Third, we study the case where $\design\rightarrow\binarydesign$ where
    $\binarydesign$ is a boundary 
    point other than $\vec{0}$ or $ \vec{1}$.
    Let the observation covariance matrix $\Cobsnoise$ and the weighting
    matrix $\designmat$ be represented as block matrices as follows:
    \begin{equation}
      \Cobsnoise = 
        \begin{bmatrix} \mat{A} & \mat{B} \\ \mat{B}\tran & \mat{D}
        \end{bmatrix}\,; \qquad
        \designmat =
        \begin{bmatrix} \mat{W_A} & \mat{W_B} \\ \mat{W}\tran_{\mat{B}} & \mat{W_D}
        \end{bmatrix}\,.  
    \end{equation}
    Without loss of generality, we assume that both
    $\mat{A},\, \mat{W_A}$ correspond to elements of $\binarydesign$ that are
    equal to $0$; permutation can be used to reach this form. 
    In this case, 
    \begin{equation}
      \pseudoinv{\Cobsnoise \odot \designmat(\binarydesign)} 
      = \begin{bmatrix} \mat{0} & \mat{0} \\ \mat{0} & \mat{D}\inv 
      \end{bmatrix}\,.
    \end{equation}

    Note that the weighted precision matrix 
    $\wdesignmat(\design)=\left(\designmat(\design)\odot\Cobsnoise\right)\inv$ for
    $\design\in(0, 1)^{\Nsens}$, that is, for designs in the interior of the
    design variable domain. Thus, we write the precision matrix using block
    inversion as follows
    \begin{equation}\label{eqn:block_inv}
      \begin{aligned}
        \wdesignmat(\design)
        &= \left(\designmat(\design) \odot \Cobsnoise \right)\inv 
        = 
        \begin{bmatrix} 
          \mat{A}\odot\mat{W_A} & \mat{B}\odot\mat{W_B} \\ 
            \mat{B}\tran\odot\mat{W}\tran_{\mat{B}} & \mat{D}\odot\mat{W_D}
        \end{bmatrix}\inv
        = 
        \begin{bmatrix} 
          \widetilde{\mat{A}}  &  \widetilde{\mat{B}}  \\ 
            \widetilde{\mat{B}}\tran  &  \widetilde{\mat{D}}  
        \end{bmatrix}\inv  \\
        &= 
        \begin{bmatrix}
          \left(\widetilde{\mat{A}} - \widetilde{\mat{B}} {\widetilde{\mat{D}}}\inv \widetilde{\mat{B}}\tran \right)\inv   
              & - \left(\widetilde{\mat{A}} - \widetilde{\mat{B}} {\widetilde{\mat{D}}}\inv \widetilde{\mat{B}}\tran \right)\inv  
              \widetilde{\mat{B}} {\widetilde{\mat{D}}} \inv
              \\ 
              -   \widetilde{\mat{D}}\inv \widetilde{\mat{B}}\tran 
              \left(\widetilde{\mat{A}} - \widetilde{\mat{B}} {\widetilde{\mat{D}}}\inv \widetilde{\mat{B}}\tran \right)\inv   
            &  
            \widetilde{\mat{D}}\inv + 
            \widetilde{\mat{D}}\inv \widetilde{\mat{B}}\tran 
              \left(\widetilde{\mat{A}} - \widetilde{\mat{B}} {\widetilde{\mat{D}}}\inv \widetilde{\mat{B}}\tran \right)\inv  
              \widetilde{\mat{B}} {\widetilde{\mat{D}}\inv} 
        \end{bmatrix}  \,.
      \end{aligned}
    \end{equation}

    It follows from the first case (also from~\Cref{lemma:regularization_equivalence}) that 
    $\widetilde{\mat{D}}\inv \rightarrow \mat{D}\inv$
    for $\design \rightarrow \binarydesign$.
    Note that $\left(\widetilde{\mat{A}} - \widetilde{\mat{B}} {\widetilde{\mat{D}}}\inv
        \widetilde{\mat{B}}\tran\right)\inv$ appears in all blocks of $\wdesignmat(\design)$. Thus,
    to achieve the desired result it suffices to show that
    $\left(\widetilde{\mat{A}} - \widetilde{\mat{B}} {\widetilde{\mat{D}}}\inv
        \widetilde{\mat{B}}\tran\right)\inv \rightarrow \mat{0}$ for $\design \rightarrow \binarydesign$, which implies
    $ \wdesignmat(\design) \rightarrow \pseudoinv{\Cobsnoise \odot \designmat(\binarydesign)}  $ for $\design\rightarrow\binarydesign$.
    To this end, let us write $\design:=(\design_{A}\tran, \design_{B}\tran)\tran$ and $\vec{s}:=(\vec{s}_{A}\tran, \vec{s}_{B}\tran)\tran$.
    Thus, from both~\eqref{eqn:noise_regularization} and ~\eqref{eqn:block_inv} it follows that 
    \begin{equation}\label{eqn:block_expansion}
      \begin{aligned}
        \designmat(\design) \!\odot\! \Cobsnoise 
        &= 
        \begin{bmatrix} 
          \mat{A}\!\odot\!\mat{W_A} & \mat{B}\!\odot\!\mat{W_B} \\ 
            \mat{B}\tran\!\odot\!\mat{W}\tran_{\mat{B}} & \mat{D}\!\odot\!\mat{W_D}
        \end{bmatrix}
        = 
        \begin{bmatrix} 
          \widetilde{\mat{A}}  &  \widetilde{\mat{B}}  \\ 
            \widetilde{\mat{B}}\tran  &  \widetilde{\mat{D}}  
        \end{bmatrix}  \\
        &= 
          \Diag{ \vec{s} \!\odot\! \vec{d}(\design) }   
        + 
          \Diag{\design} \Cobsnoise \Diag{\design} \\
        &= 
        \begin{bmatrix} 
            \Diag{ \vec{s}_{A} \!\odot\! \vec{d}(\design_{A}) }    &   \mat{0}  \\ 
            \mat{0} &   \Diag{ \vec{s}_{B} \!\odot\! \vec{d}(\design_{B}) }  
        \end{bmatrix}  
        + 
        \begin{bmatrix} 
            \mat{A} \!\odot\! \left(\design_{A}\design_{A}\tran\right) & \mat{B} \!\odot\! \left(\design_{A}\design{B}\tran\right)  \\ 
            \mat{B}\tran \!\odot\! \left(\design_{B}\design_{A}\tran\right) & \mat{D} \!\odot\! \left(\design_{B}\design{B}\tran\right)  
        \end{bmatrix}  
          \,.
      \end{aligned}
    \end{equation}
    Since for $\design \succ \vec{0}\,,\ \Cobsnoise \succ \mat{0}$ it holds that $\Diag{\design} \Cobsnoise \Diag{\design} \succ \mat{0} $, and given that 
    $\vec{0} \preceq \design \preceq \vec{1}$, it follows from~\eqref{eqn:block_expansion} that $\widetilde{\mat{D}} \succeq \mat{D}\odot\left(\design_{B}\design_{B}\tran \right) $.
    Thus, it holds that 
    \begin{equation}\label{eqn:weight_ineq}
        \begin{bmatrix} 
            \mat{A} \!\odot\! \left(\design_{A}\design_{A}\tran\right) & \mat{B} \!\odot\! \left(\design_{A}\design{B}\tran\right)  \\ 
            \mat{B}\tran \!\odot\! \left(\design_{B}\design_{A}\tran\right) & \widetilde{\mat{D}}   
        \end{bmatrix}  
        =
        \begin{bmatrix} 
            \mat{A} \!\odot\! \left(\design_{A}\design_{A}\tran\right) & \widetilde{\mat{B} }  \\ 
             \widetilde{\mat{B}}\tran  & \widetilde{\mat{D}}  \\ 
        \end{bmatrix} 
        \succeq
        \Diag{\design} \Cobsnoise \Diag{\design} 
        \succ \mat{0} \,.
    \end{equation}
    From~\eqref{eqn:weight_ineq} and by utilizing the Shur complement, it follows that 
    \begin{equation}\label{eqn:Shur_complement_firstblock}
        \mat{A} \!\odot\! \left(\design_{A}\design_{A}\tran\right) - \widetilde{\mat{B} } \widetilde{\mat{D}}\inv \widetilde{\mat{B}}\tran \succ \mat{0} \,.
    \end{equation}
    By definition (see~\eqref{eqn:block_expansion}) $\widetilde{\mat{A}}=\Diag{\vec{s}_A \odot \vec{d}(\design_{A})} + \mat{A}\odot\left(\design_{A}\design_{A}\tran \right)$.
    Thus,
    \begin{equation}
        \widetilde{\mat{A}} - \widetilde{\mat{B}} {\widetilde{\mat{D}}}\inv \widetilde{\mat{B}}\tran
        =
        \Diag{\vec{s}_A \!\odot\! \vec{d}(\design_{A})} + \mat{A}\!\odot\!\left(\design_{A}\design_{A}\tran \right)
            - \widetilde{\mat{B}} {\widetilde{\mat{D}}}\inv \widetilde{\mat{B}}\tran \\
            \stackrel{\eqref{eqn:Shur_complement_firstblock}}{\succ}  
            \Diag{\vec{s}_A \!\odot\! \vec{d}(\design_{A})} \succ \mat{0}  \,,
    \end{equation}
    which, by taking the inverse and applying the trace, implies
    $0  \leq 
        \Trace{\left(\widetilde{\mat{A}} - \widetilde{\mat{B}} {\widetilde{\mat{D}}}\inv \widetilde{\mat{B}}\tran \right)\inv}
        \leq
        \Trace{ \left( \Diag{\vec{s}_A \!\odot\!
            \vec{d}(\design_{A})}\right)\inv}$ and $\Trace{ \left( \Diag{\vec{s}_A \!\odot\!
            \vec{d}(\design_{A})}\right)\inv} \rightarrow 0$ for
        $ \design \rightarrow \binarydesign$.
    That proves that 
    $\left(\widetilde{\mat{A}} - \widetilde{\mat{B}} {\widetilde{\mat{D}}}\inv
        \widetilde{\mat{B}}\tran\right)\inv \rightarrow \mat{0}$ for $\design \rightarrow \binarydesign$.
    Note that although $\design$ whose limits are 
    considered here always remain in $(0,1)^{\Nsens}$, this is by no means restrictive, 
    since the continuous mapping $\design\rightarrow \pseudoinv{\designmat(\design)\odot \Cobsnoise}$
    with domain $(0,1)^{\Nsens}$ uniquely extends to a continuous mapping with domain 
    $[0,1]^{\Nsens}$ being the closure of $(0,1)^{\Nsens}$.
  \end{proof}
  \begin{proof}[Proof of~\Cref{lemma:gradient_continuity}]
    It is enough to discuss the proof assuming the weighting function~\eqref{eqn:cov-prod_kernel}.
    For $\design \in (0, 1]^{\Nsens}$, the weighted precision matrix is
    invertible, and continuity of the gradient follows immediately.
    For $\design_i\rightarrow 0$, 
    $\wdesignmat(\design) \rightarrow \Oh{\weightfunc_i^2}$ 
    and $\vec{\eta_i}\rightarrow \Oh{\weightfunc_i^{-3}}$.
    Thus, as $\design_i \rightarrow 0$,
    $\wdesignmat(\design) \vec{e}_i \vec{\eta_i}\tran \wdesignmat(\design) \rightarrow
    \Oh{\weightfunc_i^2\weightfunc_i^{-3}\weightfunc_i^2}=\Oh{\weightfunc}\rightarrow 0$.
    This matches the definition of the derivative defined
    by~\eqref{eqn:weights_derivative_space} for $\weightfunc_i=0$.
  \end{proof}
  %

\section{A-Optimality Criterion and Gradient}
\label{app:A_Optimality}
Here we detail the derivation of the formulae of the gradient of the
A-optimality criterion~\eqref{eqn:A_optimality_Schur} with respect to the relaxed
design. 
Recall that for each $i=1,2,\ldots,\Nsens$
\begin{equation}\label{eqn:A_opt_deriv_1}
  \begin{aligned}
    \del{ \Psi^{\GA}(\design) }{\design_i } 
      &= \Trace{ \del{\Predmat \Cparampostmat(\design) \Predmat\adj }{\design_i} }  
      = - \Trace{ \Predmat \Hessmat\inv(\design)  \Fadj
      \del{\wdesignmat(\design)}{\design_i} \F  \Hessmat\inv(\design) \Predmat\adj }  \,.
  \end{aligned}
\end{equation}
%

\subsection{Space correlations}\label{app:A_Optimality_Space}
  In the case of temporally uncorrelated observations, by
  utilizing~\eqref{eqn:weighted_obs_derivative_space}, then for each
  $i=1, 2,\ldots,\Nsens$ we  have
  \begin{equation} \label{eqn:A_opt_deriv_2}
    \begin{aligned}
      \del{ \Psi^{\GA}\!(\design) }{\design_i} 
        &= \Trace{\! \Predmat \Hessmat\inv\!(\design)  
          \Fadj \wdesignmat(\design) 
              \directsum{m=1}{\nobstimes}{\! 
                \vec{e}_i \!\left(\! \left(\mat{R}_m \vec{e}_i\right) 
                 \!\odot\! \vec{\eta}_i \!\right)\tran 
                \!+\! \left(\! \left(\mat{R}_m \vec{e}_i  \!\right) 
                 \!\odot\! \vec{\eta}_i\right) \vec{e}_i\tran 
              \!} 
             \wdesignmat(\design) \F 
          \Hessmat\inv\!(\design) \Predmat\adj \!}  \\
        &= 2\, \Trace{ \Predmat \Hessmat\inv(\design)  
          \Fadj  \wdesignmat(\design)
              \directsum{m=1}{\nobstimes}{
                \left(\left(\mat{R}_m \vec{e}_i\right) \odot \vec{\eta}_i\right) \vec{e}_i\tran   }
           \wdesignmat(\design)\F 
          \Hessmat\inv(\design) \Predmat\adj }  \,,
    \end{aligned}
  \end{equation}
  where the vector of weight derivatives $\vec{\eta}_i$ is given
  by~\eqref{eqn:weights_derivative_space}.
  Since observation errors are assumed to be temporally uncorrelated 
  (i.e., across observation time instances $t_m$), 
  then, given~\eqref{eqn:A_opt_deriv_2}, the gradient of 
  the A-optimality criterion can be written in terms of 
  cardinality vectors  $\vec{e}_i\in \Rnum^{\Nsens}$,
  as $\nabla_{\design}{ \Psi^{\GA}(\design) }= \sum_{i=1}^{\Nsens} \del{
    \Psi^{\GA}(\design) }{\design_i}\, \vec{e}_{i} $, 
  resulting in the following form: 
  \begin{equation}\label{eqn:A_opt_deriv_3}
    \begin{aligned}
      \nabla_{\design}{ \Psi^{\GA}(\design) }
        &= 2  \sum_{i=1}^{\Nsens} \vec{e}_i 
        \Trace{ \Predmat \Hessmat\inv(\design)  
          \Fadj \wdesignmat(\design) 
            \left( 
              \directsum{m=1}{\nobstimes}{\left(\left(\mat{R}_m \vec{e}_i\right) 
                \!\odot\! \vec{\eta}_i\right) \vec{e}_i\tran  }
            \right) 
           \wdesignmat(\design) \F 
          \Hessmat\inv(\design) \Predmat\adj }  \\
        &= 2 \sum_{j=1}^{\Nsens} \vec{e}_i 
          \Trace{ \Predmat \Hessmat\inv(\design)  \sum_{m=1}^{\nobstimes}{
            \Fadjind{0}{m} \mat{V}_m^{\dagger}(\design)
              \left(\left(\mat{R}_m \vec{e}_i\right) 
                \!\odot\! \vec{\eta}_i\right) \vec{e}_i\tran   
            \mat{V}_m^{\dagger}(\design) \Find{0}{m} 
            \Hessmat\inv(\design) \Predmat\adj }  
          }  \\
        &= 2 \sum_{m=1}^{\nobstimes} { \sum_{j=1}^{\Nsens} \vec{e}_i 
          \Trace{ 
            \vec{e}_i \tran  
            \mat{V}_m^{\dagger}(\design) \Find{0}{m} 
              \Hessmat\inv(\design) \Predmat\adj \Predmat \Hessmat\inv(\design)  
            \Fadjind{0}{m} \mat{V}_m^{\dagger}(\design)
            \left(\left(\mat{R}_m \vec{e}_i\right) 
                \!\odot\! \vec{\eta}_i\right) \vec{e}_i\tran 
            }  
          }  \\
        &= 2 \sum_{j=1}^{\Nsens} \vec{e}_i \vec{e}_i \tran 
            \sum_{m=1}^{\nobstimes} { 
            \mat{V}_m^{\dagger}(\design) \Find{0}{m} 
              \Hessmat\inv(\design) \Predmat\adj \Predmat \Hessmat\inv(\design)  
            \Fadjind{0}{m} \mat{V}_m^{\dagger}(\design)
            \left(\left(\mat{R}_m \vec{e}_i\right) 
                \!\odot\! \vec{\eta}_i\right) \vec{e}_i\tran 
          }  \,,
    \end{aligned}
  \end{equation}
  where as defined by~\eqref{eqn:spacial_weighted_noise},
  \begin{equation}\label{eqn:spacial_weighted_noise_supp}
    \wdesignmat(\design) 
      = \directsum{m=1}{\nobstimes}{ \mat{V}_m^{\dagger}(\design)  } \,,
      \quad
      \mat{V}_m(\design) 
      := 
          \mat{R}_m \odot 
          \left( \sum_{i,j=1}^{\Nsens}{ \varweightfunc(\design_i,\,\design_j) 
            \vec{e}_i \vec{e}_j\tran }   \right)
        \,.
  \end{equation}
  We used the circular property of matrix trace and the fact that, 
  for a symmetric matrix $\mat{A}$ and a vector $\vec{y}$ with conformable shapes. 
  Then 
  $\mat{A} \odot \left( \vec{e}_i \vec{y}\tran \right) 
    = \vec{e}_i \left( \left( \vec{e}_i \tran \mat{A}\right) \odot \vec{y}\tran \right)
    = \vec{e}_i \left( \left( \mat{A} \vec{e}_i \right) \odot \vec{y}\right)
    \tran  \,.
  $
  By utilizing the matrix of weights derivatives $\designmat^{\prime}$ defined
  by~\eqref{eqn:designmat_derivative}, we can refine \eqref{eqn:A_opt_deriv_3}  to
  \begin{equation}\label{eqn:A_opt_deriv_4}
    \nabla_{\design}{ \Psi^{\GA}(\design) }
      = 2 \sum_{m=1}^{\nobstimes} { 
      \diag{
        \mat{V}_m^{\dagger}(\design) \Find{0}{m} 
          \Hessmat\inv(\design) \Predmat\adj \Predmat \Hessmat\inv(\design)  
        \Fadjind{0}{m} \mat{V}_m^{\dagger}(\design)
          \left( \mat{R}_m \odot \designmat^{\prime} \right)
        } } \,.
  \end{equation}
  %

\subsection{Spatiotemporal correlations}\label{app:A_Optimality_SpaceTime}
  In the presence of spatiotemporal correlations, the derivative of the 
  A-optimality criterion is obtained as follows. For each
  $i=1, 2,\ldots,\Nsens$
  \begin{equation}
    \begin{aligned}
      \del{ \Psi^{\GA}(\design) }{\design_i} 
        &= - \Trace{ \Predmat \Hessmat\inv(\design)  \Fadj
        \del{\wdesignmat(\design)}{\design_i} \F  \Hessmat\inv(\design)  \Predmat\adj }  \\
        &= 2 \Trace{ \Predmat \Hessmat\inv(\design)  
        \Fadj \wdesignmat(\design)
        \left(
          \sum_{m=1}^{\nobstimes}
          \left(
              \left( \Cobsnoise \vec{e}_q \right) \odot \vec{\vartheta}_{i, m}
            \right) \vec{e}_q \tran
        \right)
        \wdesignmat(\design) \F  
        \Hessmat\inv(\design) \Predmat\adj }  \\
        &= 2 \Trace{ \vec{e}_q \tran
        \wdesignmat(\design) \F  
        \Hessmat\inv(\design) \Predmat\adj  \Predmat \Hessmat\inv(\design) 
        \Fadj \wdesignmat(\design)
          \sum_{m=1}^{\nobstimes}
          \left(
              \left( \Cobsnoise \vec{e}_q \right) \odot \vec{\vartheta}_{i, m}
            \right) }  \\
        &= 2 \vec{e}_q \tran
        \wdesignmat(\design) \F  
        \Hessmat\inv(\design) \Predmat\adj  \Predmat \Hessmat\inv(\design) 
        \Fadj \wdesignmat(\design)
          \sum_{m=1}^{\nobstimes}
          \left(
              \left( \Cobsnoise \vec{e}_q \right) \odot \vec{\vartheta}_{i, m}
            \right) 
        \,,
    \end{aligned}
  \end{equation}
  where $q={i+(m\!-\!1)\Nsens}$ and the vector of weights derivatives 
  $\vec{\vartheta}_{i,m}$ is defined by~\eqref{eqn:weights_derivative_spacetime}.
  The gradient of the A-optimality criterion can be written 
  as 
  \begin{equation}
    \begin{aligned}
      \nabla_{\design}{ \Psi^{\GA}(\design) }
        &
        = 2 \sum_{i=1}^{\Nsens} \vec{e}_{i}
          \vec{e}_{i+(n-1)\Nsens} \tran
            \wdesignmat(\design) \F  
            \Hessmat\inv(\design) \Predmat\adj  \Predmat \Hessmat\inv(\design)  
            \Fadj \wdesignmat(\design)
              \sum_{m=1}^{\nobstimes} \left(
              \left( \Cobsnoise \vec{e}_q \right) \!\odot\! \vec{\vartheta}_{i, m} \right) \\
        &= 2 \sum_{i=1}^{\Nsens} \sum_{m=1}^{\nobstimes}
          \vec{e}_{i} \vec{e}_q \tran
            \wdesignmat(\design) \F  
            \Hessmat\inv(\design) \Predmat\adj  \Predmat \Hessmat\inv(\design)  
            \Fadj \wdesignmat(\design)
              \left(
              \left( \Cobsnoise \vec{e}_q \right) \!\odot\! \vec{\vartheta}_{i, n}
            \right) 
        \,.
    \end{aligned}
  \end{equation}
  %

\subsection{A-optimal design with randomized trace estimator}
  \label{app:A_Optimality_Trace}
  A randomized approximation of the A-optimality criterion 
  $\widetilde{\Psi} ^{\GA} (\design) \approx \Psi^{\GA}(\design)$,
  as defined by~\eqref{eqn:randomized_A_optimality}, takes the form
  \begin{equation}
    \begin{aligned}
      \widetilde{\Psi}^{\GA} (\design) 
        &= \! \frac{1}{n_r} \sum_{r=1}^{ n_r } {\vec{z}_r\tran
        \Cpredpostmat(\design) \vec{z}_r } 
        = \! \frac{1}{n_r} \sum_{r=1}^{ n_r } {\vec{z}_r\tran
        \Hessmat\inv(\design) \vec{z}_r } \\
        &=\! \frac{1}{n_r} \sum_{r=1}^{ n_r } \vec{z}_r \tran 
          \Predmat \left( \F\adj \wdesignmat(\design) \F
            \!+\! \Cparampriormat\inv \right)\inv \Predmat\adj \vec{z}_r 
            \,,
    \end{aligned}
  \end{equation}
  where $\vec{z}_r \in \Rnum^{\Npred} $ and $\wdesignmat(\design) $ is given
  by~\eqref{eqn:Schur_weighted_joint_likelihood}.
  The gradient of this criterion follows
  directly as 
  \begin{equation}
    \del{ \widetilde{\Psi}^{\GA} (\design) }{\design_i}
      = -\frac{1}{n_r} \sum_{r=1}^{ n_r } {\vec{z}_r\tran \Predmat \Hessmat(\design)\inv 
        \Fadj \del{\wdesignmat(\design) }{\design_i} \F
        \Hessmat(\design)\inv \Predmat\adj \vec{z}_r} \,.
  \end{equation}

  This formula of the gradient can be refined given the exact formulation of 
  the design matrix $\designmat(\design)$
  and its derivative $\del{\wdesignmat(\design)}{\design_i}$, 
  as discussed before.
  If we assume the observation errors are temporally uncorrelated,
  following the same procedure as in~\Cref{app:A_Optimality_Space}, 
  then we have
  \begin{equation}
    \begin{aligned}
      \del{ \widetilde{\Psi}^{\GA}(\design) } {\design_i}
        &= \frac{1}{n_r} \sum_{r=1}^{ n_r } {\vec{z}_r\tran \Predmat \Hessmat(\design)\inv 
          \Fadj \wdesignmat(\design) 
            \directsum{m=1}{\nobstimes}{ \mat{R}_m \!\odot\! 
              \left( \vec{e}_i \left( \vec{\eta}_i\right) \tran + \vec{\eta}_i \vec{e}_i \tran \right)  
            } 
          \wdesignmat(\design) \F
          \Hessmat(\design)\inv \Predmat\adj \vec{z}_r} \\
        &= \frac{2}{n_r}\, \sum_{r=1}^{ n_r } \sum_{m=1}^{\nobstimes}
          \left( \vec{\xi}\adj_{r,m}\right)\tran 
          \vec{e}_i\left( 
            \left( \mat{R}_m\vec{e}_i \right) \odot \vec{\eta}_i   
          \right)  \tran
            \vec{\xi}_{r,m}  \,,
    \end{aligned}
  \end{equation}
  where $\vec{\eta}_i$ is given by~\eqref{eqn:weights_derivative_space}, the vectors $\vec{\xi}_{r,m}$ and $\vec{\xi}\adj_{r,m}$ are given by
  \begin{equation}
    \vec{\xi}_{r,m} = \mat{V}_m^{\dagger}(\design) \F_{0, m} \Hessmat(\design)\inv \Predmat\adj \vec{z}_r; \quad
    \left( \vec{\xi}\adj_{r,m}\right)\tran = \vec{z}_r\tran \Predmat \Hessmat(\design)\inv 
           \Fadjind{0}{m} \mat{V}_m^{\dagger}(\design) \,,
  \end{equation}
  and $\mat{V}_m(\design)$ is given by~\eqref{eqn:spacial_weighted_noise_supp}.
  The full gradient, written in terms of its components, in this case is
  \begin{equation}
    \begin{aligned}
      \nabla_{\design}{\widetilde{\Psi}^{\GA}(\design)}
        &= \frac{2}{n_r}\sum_{i=1}^{\Nsens} \vec{e}_i
          \sum_{r=1}^{ n_r } \sum_{m=1}^{\nobstimes}
            \left( \vec{\xi}\adj_{r,m}\right)\tran 
            \vec{e}_i\left( 
              \left( \mat{R}_m\vec{e}_i \right) \odot \vec{\eta}_i   
            \right) \tran 
              \vec{\xi}_{r,m}   \\
        &= \frac{2}{n_r} 
        \sum_{r=1}^{ n_r } \sum_{m=1}^{\nobstimes}
            \vec{\xi}\adj_{r,m} \odot \left( 
              \left( \mat{R}_m \odot \designmat^{\prime} \right)  
            \vec{\xi}_{r,m}\right)  \,,
    \end{aligned}
  \end{equation}
  with $\designmat^{\prime}$ defined by~\eqref{eqn:designmat_derivative}.
  In the presence of spatiotemporal correlations, the
  gradient is found as follows:
  \begin{equation}
    \begin{aligned}
      \del{ \widetilde{\Psi}^{\GA}(\design) } {\design_i}
        &= \frac{1}{n_r} \sum_{r=1}^{ n_r } 
          \vec{z}_r\tran \Predmat \Hessmat\inv\!(\design)
          \Fadj \wdesignmat(\design) \! \left(\! 
            \Cobsnoise \!\odot \!
              \sum_{m=1}^{\nobstimes}\!
                \left( 
                  \vec{e}_q \vec{\vartheta}_{i, m}\tran 
                    \!+\! \vec{\vartheta}_{i, m} \vec{e}_q \tran
                \right)\! 
          \right) \! \wdesignmat(\design) \F
          \Hessmat\inv\!(\design) \Predmat\adj \vec{z}_r \\
        &= \frac{2}{n_r} \, \sum_{r=1}^{ n_r } {\vec{z}_r\tran \Predmat
        \Hessmat\inv\!(\design) 
          \Fadj \wdesignmat(\design) \left( 
            \Cobsnoise \!\odot \!
              \sum_{m=1}^{\nobstimes}
                \left( 
                  \vec{e}_q \vec{\vartheta}_{i, m}\tran 
                \right)  
          \right) \wdesignmat(\design)  \F 
          \Hessmat\inv\!(\design) \Predmat\adj \vec{z}_r} \\
        &= \frac{2}{n_r}\, \sum_{r=1}^{ n_r } 
          \sum_{m=1}^{\nobstimes}
            \vec{\psi}\adj 
              \vec{e}_q \left( 
                \left( \Cobsnoise \vec{e}_q\right) \odot \vec{\vartheta}_{i, m} 
              \right) \tran  
          \vec{\psi}_r \,,
    \end{aligned}
  \end{equation}
  where $q={i+(m\!-\!1)\Nsens}$,  
  $\vec{\psi}_r = \wdesignmat(\design) \F \Hessmat(\design)\inv \Predmat\adj \vec{z}_r$, and 
  $\vec{\psi}_r\adj = \vec{z}_r\tran \Predmat \Hessmat(\design)\inv \Fadj \wdesignmat(\design) $.
  The full gradient follows as
  \begin{equation}
    \begin{aligned}
      \nabla_{\design}{\widetilde{\Psi}^{\GA}(\design)}
        &= \frac{2}{n_r}\, \sum_{i=1}^{\Nsens} \vec{e}_i
        \sum_{r=1}^{ n_r } 
          \sum_{m=1}^{\nobstimes}
            \vec{\psi}_r\adj 
              \vec{e}_{i+(m-1)\Nsens} \left( 
                \left( \Cobsnoise \vec{e}_{i+(m-1)\Nsens}\right) \odot \vec{\vartheta}_{i, m} 
              \right) \tran  
          \vec{\psi}_r  \\
        &= \frac{2}{n_r}\, \sum_{r=1}^{ n_r }
        \sum_{i=1}^{\Nsens}  
          \sum_{m=1}^{\nobstimes}
            \vec{e}_i \vec{\psi}_r\adj 
              \vec{e}_{i+(m-1)\Nsens} \left( 
                \left( \Cobsnoise \vec{e}_{i+(m-1)\Nsens}\right) \odot \vec{\vartheta}_{i, m} 
              \right) \tran  
          \vec{\psi}_r \,. \\
    \end{aligned}
  \end{equation}
  %

\section{D-Optimality Criterion and Gradient}
\label{app:D_Optimality}
  When the D-optimality is set as the OED criterion, 
  that is, by defining the optimal design using~\eqref{eqn:D_optimality_Schur}, 
  the derivative of the optimization objective w.r.t the design variables $\design_i,\,i=1,2,\ldots,\Nsens\,,$ is
  \begin{equation}\label{eqn:D_opt_deriv_2}
    \begin{aligned}
      \del{ \Psi^{\GD}(\design) }{\design_i } 
        &= \Trace{ \Cpredpost \inv(\design)\,  
          \del{ \Predmat \Cparampostmat(\design) \Predmat\adj }{\design_i } } \\ 
        &= - \Trace{ \Cpredpost \inv(\design) \,
          \Predmat \Hessmat\inv(\design)  \Fadj \del{\wdesignmat(\design)}{\design_i} \F  \Hessmat\inv(\design) \Predmat\adj  } \,.
    \end{aligned}
  \end{equation}
  %
  
  \subsection{Space correlations}
  \label{app:D_Optimality_Space}
    For the sake of derivation, we define the Cholesky factorization of the 
    prediction covariance as 
    $\Cpredpost(\design) = \Cpredpost^{1/2} \Cpredpost^{T/2}$, with $\Cpredpost^{1/2}$ 
    being the lower triangular factor. 
    Moreover, the dimensionality of the prediction QoI is generally small, and such 
    factorization if needed is inexpensive.
    Then, $\Cpredpost\inv(\design) = \Cpredpost^{-1/2} \Cpredpost^{-T/2}$ 
    with $\Cpredpost^{-1/2}=\left(\Cpredpost^{1/2}\right)\inv$.
    The gradient of the D-optimality criterion follows as 
    \begin{equation}\label{eqn:D_opt_deriv_3}
      \begin{aligned}
        \nabla_{\design}{ \Psi^{\GD}(\design) }
          &= - \sum_{i=1}^{\Nsens} \vec{e}_i \Trace{ \Cpredpost\inv\!(\design)
          \Predmat \Hessmat\inv\!(\design)  \Fadj 
              \del{\wdesignmat(\design)}{\design_i} 
            \F  \Hessmat\inv\!(\design) \Predmat\adj  } \\
          &= \!\sum_{i=1}^{\Nsens} \vec{e}_i \Trace{ \Cpredpost^{-\frac{T}{2}}  
            \Predmat \Hessmat\inv\!(\design)  
            \Fadj \wdesignmat(\design) 
                \directsum{m=1}{\nobstimes}{\!\mat{R}_m \!\odot\! \left(\! \vec{e}_i
               \! \left( \vec{\eta}_i\right)\! \tran  \!+\! \vec{\eta}_i
                \vec{e}_i\tran\! \right) \! } 
            \wdesignmat(\design) \F 
            \Hessmat\inv\!(\design) \Predmat\adj \Cpredpost^{-\frac{1}{2}}  } \\
          &= 2 \sum_{i=1}^{\Nsens} \vec{e}_i \Trace{ \Cpredpost^{-\frac{T}{2}}  
            \Predmat \Hessmat\inv\!(\design)  
            \Fadj \wdesignmat(\design) 
              \directsum{m=1}{\nobstimes}{\mat{R}_m \!\odot\! \left( \vec{\eta}_i \vec{e}_i \tran \right)  } 
            \wdesignmat(\design) \F 
            \Hessmat\inv\!(\design) \Predmat\adj \Cpredpost^{-\frac{1}{2}} }  \\
          &= 2 \sum_{i=1}^{\Nsens} \vec{e}_i \sum_{m=1}^{\nobstimes}
          \Trace{ \Cpredpost^{-\frac{T}{2}}  
            \Predmat \Hessmat\inv\!(\design)  
            \Fadjind{0}{m} \mat{V}_m^{\dagger}\!(\design) 
              \left( \left( \mat{R}_m \vec{e}_i \right) \!\odot\! \vec{\eta}_i \right) \vec{e}_i \tran 
            \mat{V}_m^{\dagger}\!(\design) \Find{0}{m} 
            \Hessmat\inv\!(\design) \Predmat\adj \Cpredpost^{-\frac{1}{2}} }  \\
          &= 2 \sum_{i=1}^{\Nsens} \vec{e}_i \vec{e}_i \tran
          \sum_{m=1}^{\nobstimes}
            \mat{V}_m^{\dagger}(\design) \Find{0}{m} 
            \Hessmat\inv\!(\design) \Predmat\adj  \Cpredpost\inv\!(\design) 
            \Predmat \Hessmat\inv(\design)  
            \Fadjind{0}{m} \mat{V}_m^{\dagger}(\design) 
              \left( \left( \mat{R}_m \vec{e}_i \right) \!\odot\! \vec{\eta}_i \right)  \\
          &= 2 \sum_{m=1}^{\nobstimes} \diag{
            \mat{V}_m^{\dagger}(\design) \Find{0}{m} 
            \Hessmat\inv\!(\design) \Predmat\adj \Cpredpost\inv\!(\design)
            \Predmat \Hessmat\inv(\design)  
            \Fadjind{0}{m} \mat{V}_m^{\dagger}(\design) 
              \left( \mat{R}_m \!\odot\! \designmat^{\prime} \right) } \,,
      \end{aligned}
    \end{equation}
    where we used the circular property of the matrix trace and  the fact that the trace
    is invariant under matrix transposition and $\mat{V}_m^{\dagger}(\design)$
    is given by~\eqref{eqn:spacial_weighted_noise_supp}.

  \subsection{Spatiotemporal correlations}
  \label{app:D_Optimality_SpaceTime}
  The gradient of the D-optimality criterion in this case is
  \begin{equation}
    \begin{aligned}
      \nabla_{\design}{ \Psi^{\GD}(\design) }
        &= - \sum_{i=1}^{\Nsens} \vec{e}_i 
          \Trace{ \Cpredpost\inv \Predmat \Hessmat\inv\!(\design)  \Fadj 
          \del{\wdesignmat(\design)}{\design_i} 
          \F  \Hessmat\inv(\design) \Predmat\adj  } \\
        &=\! \sum_{i=1}^{\Nsens} \vec{e}_i 
          \Trace{\! \Cpredpost\inv \Predmat \Hessmat\inv\!(\design)  
          \Fadj \wdesignmat(\design)
          \!
            \left(\! 
              \Cobsnoise \!\odot \!
                \sum_{m=1}^{\nobstimes}
                  \left( 
                    \vec{e}_q \vec{\vartheta}_{i, m}\tran 
                      \!+\! \vec{\vartheta}_{i, m} \vec{e}_q \tran
                  \right)  \!
            \right)\!  
          \wdesignmat(\design) \F 
          \Hessmat\inv\!(\design) \Predmat\adj  \!} \\
        &= 2\, \sum_{i=1}^{\Nsens} \vec{e}_i 
          \Trace{ \Cpredpost\inv \Predmat \Hessmat\inv(\design) 
          \Fadj \wdesignmat(\design)
            \left( 
              \Cobsnoise \!\odot\! 
                \sum_{m=1}^{\nobstimes}
                  \left( 
                    \vec{e}_q \vec{\vartheta}_{i, m}\tran 
                  \right)  
            \right)  
          \wdesignmat(\design) \F 
          \Hessmat\inv(\design) \Predmat\adj  } \\
        &= 2\, \sum_{i=1}^{\Nsens} \vec{e}_i \sum_{m=1}^{\nobstimes}
          \Trace{ \Cpredpost\inv \Predmat \Hessmat\inv(\design) 
          \Fadj \wdesignmat(\design)
            \left( 
              \Cobsnoise \odot 
                \left( 
                  \vec{e}_q \vec{\vartheta}_{i, m}\tran 
                \right)  
            \right)  
          \wdesignmat(\design) \F 
          \Hessmat\inv(\design) \Predmat\adj  } \\
        &= 2\, \sum_{i=1}^{\Nsens} \vec{e}_i \sum_{m=1}^{\nobstimes}
          \Trace{ \Cpredpost\inv \Predmat \Hessmat\inv(\design)  
          \Fadj \wdesignmat(\design)
            \vec{e}_q \left( 
              \Cobsnoise \vec{e}_q \odot \vec{\vartheta}_{i, m} 
            \right) \tran  
          \wdesignmat(\design) \F 
          \Hessmat\inv(\design) \Predmat\adj  } \\
        &= 2\, \sum_{i=1}^{\Nsens} \sum_{m=1}^{\nobstimes}
          \vec{e}_i \left( 
              \Cobsnoise \vec{e}_q \odot \vec{\vartheta}_{i, m} 
            \right) \tran  
          \wdesignmat(\design) \F 
          \Hessmat\inv(\design) \Predmat\adj  \Cpredpost\inv \Predmat \Hessmat\inv(\design)  
          \Fadj \wdesignmat(\design)
            \vec{e}_q  
          \,,
    \end{aligned}
  \end{equation}
  where $\vec{e}_q \equiv
  \vec{e}_{i+(m-1)\Nsens}\in\Rnum^{\Nobs};\,i=1,2,\ldots,\Nsens,\,m=1,2,\ldots,\nobstimes$.

\section{Additional Numerical Results}
\label{supp:subsec:numerical_results}
In this section we give additional empirical results to complement the work
presented in~\Cref{sec:numerical_experiments}.

\subsection{Reduced-order approximation of Hessian}
\label{supp:subsec:reduced_Hessian}
Here we give more details about the approach used to develop a reduced-order
approximation of the Hessian operator in the Bayesian inverse problem.
This complements the discussion in~\Cref{sec:numerical_experiments}

Repeated evaluation of Hessian matrix-vector products is computationally demanding.
We use the two-pass algorithm described in~\cite{saibaba2016randomizedHermitCOPY} to generate 
a randomized reduced-order approximation of the Hessian $\Hessmat$.
The number of eigenvalues is set to $80$, and the oversampling parameter is $p=20$.
\Cref{fig:Hessian_Approx} shows the leading $\budget=80$ eigenvalues, with approximation error.
\begin{figure}[ht]
  \centering
  \includegraphics[width=0.70\linewidth]{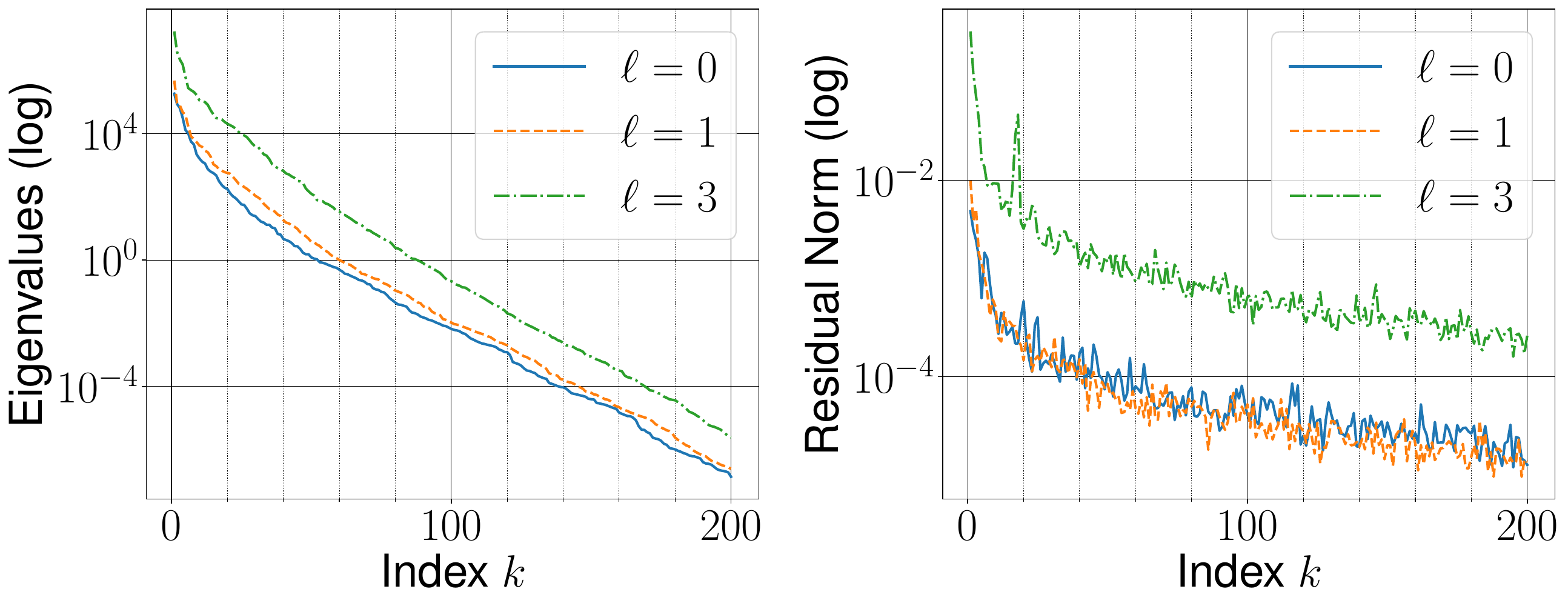}
  \caption{Leading eigenvalues, on a logarithmic scale, of the Hessian $\Hessmat$
    obtained by the two-pass algorithm~\cite{saibaba2016randomizedHermitCOPY},
    along with the residual norms. 
    Results are obtained with all sensors activated, i.e., $\design=\vec{1}$.
    }
  \label{fig:Hessian_Approx} 
\end{figure}
Fast decay in eigenvalues, with over 99\% of the variance explained by the leading $80$ 
eigenvalues in all three cases, supports the accuracy of the reduced-order
approximation  of the Hessian.


\subsection{Randomized estimator of the A-optimality criterion}
\label{supp:subsec:randomized_trace_estimation}
    In the numerical experiments in ~\Cref{sec:numerical_experiments} we 
    used the Hutchinson randomized trace estimator 
    to  approximate the A-optimality criterion, which enabled us to carry 
    out several comparative experiments efficiently, 
    and we set the sample size to $n_r=25$, which we believed would achieve a  highly accurate
    estimate of the optimality criterion.
    To test the validity of this assertion, in our settings we compared the exact 
    value of the posterior covariance trace with the randomized approximation. 
    Results indicating the accuracy of the posterior trace approximation by 
    randomization are shown in~\Cref{fig:Posterior_Trace_Randomization}.
    The value of the Hutchinson randomized trace estimator was evaluated for the
    three experimental setups discussed in this section.
    In each case, the trace estimate of the posterior covariance matrix trace was
    evaluated by using several choices of the sample size $n_r$.
    For each experimental setting and for each choice of the sample size, 
    the trace approximation was carried out $100$ times, each with a new sample.  
    The red stars show the true value of the posterior covariance trace in
    each case. 
    These results show that even using a sample of size $n_r=1$ generated from
    Rademacher distribution, we obtain a good approximation of the trace of the
    posterior covariance matrix, albeit exhibiting high variability around the
    true value. A much better estimate of the true value can be obtained by
    increasing the sample size $n_r$.
    Thus, in our experiments we followed this approach to  approximate the A-optimality
    criterion, which enabled us to carry out several comparative experiments
    efficiently, and we set the sample size to $n_r=25$. 
    \begin{figure}[ht]
      \centering
      \includegraphics[width=0.55\linewidth]{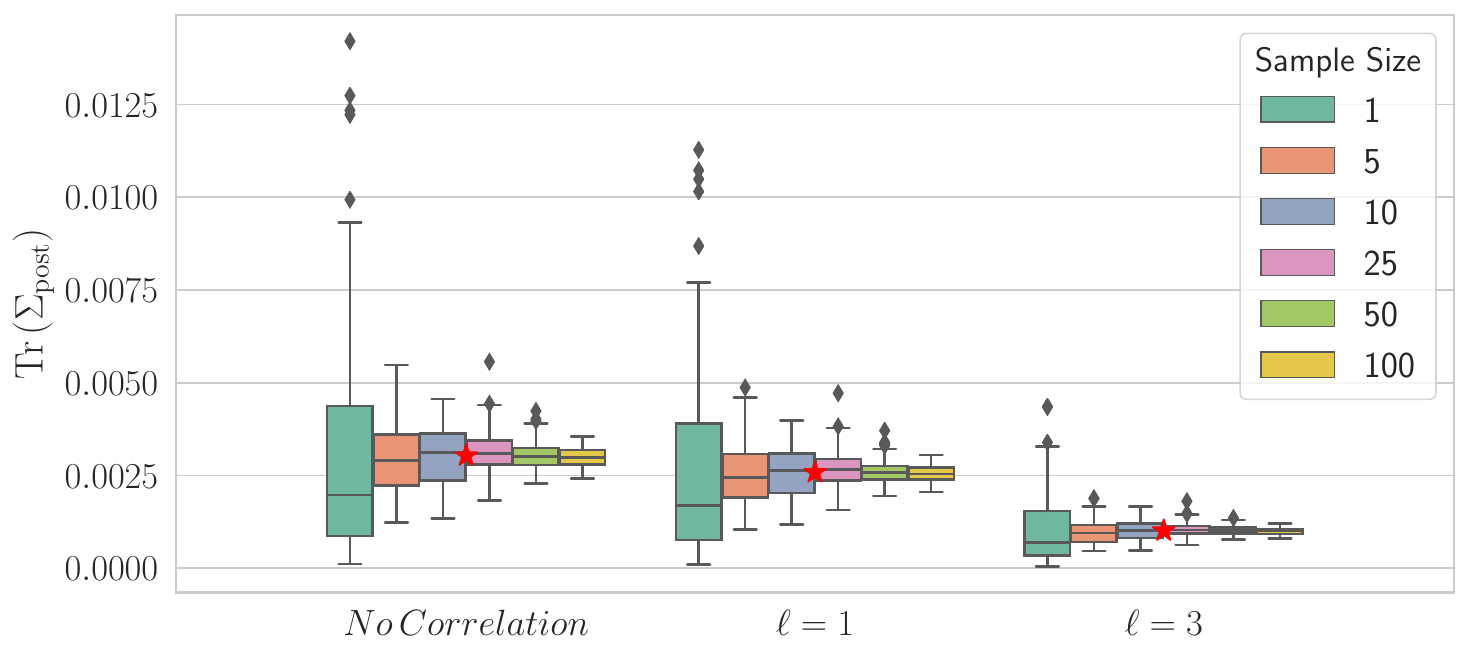}
      \caption{
        Results showing accuracy of posterior trace approximation by
        randomization using the Hutchinson trace estimator.
        Box plots are shown for the three cases studied in this section. 
        The first case considers the setup with no observation correlations, 
        and the other cases correspond to the experiments where observation correlations 
        are synthesized with length-scale $\ell$ set to $1$ and $3$, respectively. 
        The true value of the posterior covariance  trace $\Trace{\Cpredpost}$ 
        is plotted as a red star for each case.
        In each case the randomized trace is calculated by using various choices
        of the sample size  $n_r$, where the  random vectors are sampled
        from Rademacher distribution. The trace approximation is carried out
        $100$ times for each choice of the sample size and for each case to generate the boxplots. 
      }
      \label{fig:Posterior_Trace_Randomization}
    \end{figure}

\section*{Acknowledgments}
This work was partially supported by the U.S. Department of Energy, Office of Science, Advanced Scientific Computing Research Program under contract DE-AC02-06CH11357 and Laboratory Directed Research and Development (LDRD) funding from Argonne National Laboratory.
We thank three anonymous referees and the associate editor for their detailed and
insightful comments that helped us improve our manuscript.


\bibliographystyle{siamplain}
\bibliography{references}

\iftrue
\null 
  \begin{flushright}
  \scriptsize \framebox{\parbox{5.3in}{
  The submitted manuscript has been created by UChicago Argonne, LLC,
  Operator of Argonne National Laboratory (``Argonne"). Argonne, a
  U.S. Department of Energy Office of Science laboratory, is operated
  under Contract No. DE-AC02-06CH11357. The U.S. Government retains for
  itself, and others acting on its behalf, a paid-up nonexclusive,
  irrevocable worldwide license in said article to reproduce, prepare
  derivative works, distribute copies to the public, and perform
  publicly and display publicly, by or on behalf of the Government.
  The Department of
  Energy will provide public access to these results of federally sponsored research in accordance
  with the DOE Public Access Plan. http://energy.gov/downloads/doe-public-access-plan. }}
  \normalsize
  \end{flushright}
\fi

\end{document}